\theoremstyle{plain}
\newtheorem{thm}{Theorem}[section]
\newtheorem*{mainthm*}{Main theorem}
\newtheorem{prop}[thm]{Proposition}
\newtheorem{lem}[thm]{Lemma}
\newtheorem{coro}[thm]{Corollary}
\theoremstyle{definition}
\newtheorem{defi}[thm]{Definition}
\theoremstyle{remark}
\newtheorem{rem}[thm]{Remark}
\renewcommand{\leq}{\leqslant}
\renewcommand{\geq}{\geqslant}
\newcommand{\F}{\mathcal{F}}
\newcommand{\G}{\mathcal{G}}
\newcommand{\rk}{\operatorname{rk}}
\newcommand{\DM}{\mathbb{D}}
\newcommand{\DA}{\mathbb{DA}}
\newcommand{\D}{\operatorname{D}}
\newcommand{\ct}{\operatorname{ct}}
\DeclareMathOperator{\id}{id}
\def\AC{\mathcal{A}}
\def\DM{\mathbb{D}}
\def\KM{\mathbb{K}}
\def\QM{\mathbb{Q}}
\newcommand{\KMMod}{\operatorname{\mathbb{K}-Mod}}
\newcommand{\Sch}{\operatorname{Sch}}
\DeclareMathOperator{\Id}{Id}
\DeclareMathOperator{\hocolim}{hocolim}
\DeclareMathOperator{\Rlim}{R\,lim}
\DeclareMathOperator{\holim}{holim}
\let\lim\relax
\DeclareMathOperator{\lim}{lim}
\DeclareMathOperator{\Hom}{Hom}
\title{The localization spectral sequence in the motivic setting}
\date{}
\author[C. Dupont]{Cl\'{e}ment Dupont}
\address{Institut Montpelli\'erain Alexander Grothendieck, Universit\'{e} de Montpellier, CNRS, Montpellier, France}
\email{clement.dupont@umontpellier.fr}
\author[D. Juteau]{Daniel Juteau}
\address{LAMFA, Universit\'e de Picardie Jules Verne, CNRS, 33 rue Saint-Leu, 80000 Amiens, France}
\email{daniel.juteau@u-picardie.fr}
\begin{document}

\maketitle

\begin{abstract}
We construct and study a motivic lift of a spectral sequence associated to a stratified scheme, recently discovered by Petersen in the context of mixed Hodge theory and $\ell$-adic Galois representations. The original spectral sequence expresses the compactly supported cohomology of an open stratum in terms of the compactly supported cohomology of the closures of strata and the combinatorics of the poset underlying the stratification. Some of its special cases are classical tools in the study of arrangements of subvarieties and configuration spaces. Our motivic lift lives in the triangulated category of \'{e}tale motives and takes the shape of a Postnikov system. We describe its connecting morphisms and study some of its functoriality properties.
\end{abstract}

\section*{Introduction}

    For a topological space $X$, an open subspace $U$ and a complementary closed subspace $Z$, the compactly supported cohomology groups of $X$, $U$, $Z$ are related by a localization long exact sequence:
    \begin{equation}\label{eq: localization long exact sequence}
    \cdots \longrightarrow  H^\bullet_c(U)\longrightarrow H^\bullet_c(X) \longrightarrow H^\bullet_c(Z)\longrightarrow H^{\bullet+1}_c(U)\longrightarrow \cdots
    \end{equation}
    This can typically be used for two different purposes: either to compute the compactly supported cohomology of $X$ knowing that of $U$ and $Z$, or to compute the compactly supported cohomology of $U$ knowing that of $X$ and $Z$.
    
    More generally, let $X$ be a topological space equipped with a stratification, i.e., a partition by locally closed subspaces called strata such that the closure of a stratum is a union of strata; we assume for simplicity that there is a unique open stratum $X_0$. The specialization relation turns the set of strata into a finite poset whose least element is $X_0$. One may either want to understand the space $X$ in terms of the strata, or to understand the open stratum $X_0$ in terms of the closures of the strata. In the former case, the localization long exact sequence can be generalized to a spectral sequence in an obvious way. In the latter case, however, this was explained only recently by Petersen \cite{petersen} who devised a spectral sequence converging to the compactly supported cohomology of $X_0$, whose first page is expressed in terms of the compactly supported cohomology of the closures of strata, and of the combinatorics of the poset of strata. We refer the reader to the introduction of [\emph{loc. cit.}] for a clear interpretation in terms of inclusion-exclusion. 
    
    A precursor of Petersen's spectral sequence (or rather, of its Poincar\'{e} dual version) is Deligne's spectral sequence appearing in mixed Hodge theory \cite[3.2.4.1]{delignehodgeII} where the stratification is induced by a normal crossing divisor inside a smooth projective complex variety. Several other special cases are classical tools in the study of more combinatorially involved contexts such as arrangements of subvarieties \cite{goreskymacpherson, looijenga, bjornerekedahl, dupontOS, bibby} and configuration spaces \cite{cohentaylor, kriz, totaro, getzler}. In the general case, Petersen proves that his spectral sequence is compatible with mixed Hodge structures when $X$ is a complex algebraic variety equipped with an algebraic stratification. It also has an \'{e}tale $\ell$-adic variant which is compatible with Galois actions. The proofs are sheaf-theoretic and  involve filtering well-chosen resolutions in abelian categories of sheaves.
    
    The goal of this article is to lift Petersen's spectral sequence to a motivic setting. Let now $X$ be a scheme equipped with a stratification (see \S\ref{sec: sec main theorem} for the relevant assumptions) with a unique open stratum $X_0$, and let $j:X_0\hookrightarrow X$ denote the open immersion. We also denote by $i_{\overline{S}}^X:\overline{S}\hookrightarrow X$ the closed immersion of the closure of a stratum $S$. We denote by $\hat{P}$ the poset of strata and fix a strictly increasing map $\sigma:\hat{P}\to \mathbb{Z}$ such that $\sigma(X_0)=0$. We fix a ring of coefficients $\KM$. To every stratum $S\in \hat{P}$ is associated a cochain complex of $\KM$-modules $C^\bullet(S)$ which computes the reduced cohomology of the open interval $(X_0,S)$ in the poset $\hat{P}$.
    
    We work in the context of the triangulated category of \'{e}tale motives (or motivic sheaves) over $X$ with coefficients in $\KM$, denoted $\DA_X$ \cite{ayoubPhD1, ayoubPhD2, ayoubguide, cisinskidegliseetale, cisinskideglise}. The lack of an abelian-categorical formalism for motivic sheaves forces us to depart from Petersen's original techniques.
    In the triangulated setting, the notion of a filtration has to be replaced with that of a Postnikov system, that is, a sequence of distinguished triangles where each triangle has a vertex in common with the next one. The main result of this article is as follows (see Theorem \ref{thm:maintheorem} and Theorem \ref{thm: functoriality} for more precise statements).
    
    \begin{mainthm*}
    For $\mathcal{F}\in \DA_X$ there is a Postnikov system in $\DA_X$:
	$$\xymatrix{
	\cdots &\ar[rr]&&F^2\ar[rr]\ar[ld] && F^1\ar[rr]\ar[ld]  && F^0= j_!j^!\F \ar[ld]  \\
	&&G^2\ar[lu]^{+1}&& G^1\ar[ul]^{+1} && G^0 \ar[ul]^{+1}& 
	}$$
	where the graded objects are given by
	$$G^k = \bigoplus_{\substack{S\in\hat{P}\\ \sigma(S)=k}} (i_{\overline{S}}^X)_*(i_{\overline{S}}^X)^*\F \otimes C^\bullet(S) \ .$$
	The connecting morphisms $G^k\to G^{k+1}[1]$ are explicitly computed. This Postnikov system is functorial in $\F$ and functorial with respect to a class of stratified morphisms.
    \end{mainthm*}
    
    In the case of the constant motivic sheaf $\F=\KM_X$, this theorem expresses the compactly supported motive of $X_0$ in terms of the compactly supported motives of the closures of strata $\overline{S}$ and the complexes $C^\bullet(S)$. For instance, if the stratification consists of an open $j:U\hookrightarrow X$ and its closed complement $i:Z\hookrightarrow X$, the Postnikov system reduces to the localization triangle
    $$j_!\KM_U\longrightarrow \KM_X \longrightarrow i_*\KM_Z\stackrel{+1}{\longrightarrow}$$
    which is the motivic lift of the localization long exact sequence \eqref{eq: localization long exact sequence}.
    
    One can recover Petersen's spectral sequence(s) along with a description of the $d_1$ differential from our main theorem, by applying (compactly supported) cohomological realization functors. In a genuinely motivic setting, an application to the study of classical polylogarithm motives will appear as a joint article of the first author with J. Fres\'{a}n \cite{dupontfresan}. There, it is crucial to have a Postnikov system that is functorial with respect to a group action on a stratified scheme, which is a special case of the functoriality statement in our theorem.
    
    One of the main difficulties in the proof of our main theorem is to construct the Postnikov system in a way that makes it obviously functorial. For this we cannot simply work in the context of a triangulated category, where cones are not functorial. Rather, we are led to work in the enhanced setting of triangulated derivators. Another reason for this choice is that we rely on the six functor formalism for \'{e}tale motives, developed by Ayoub in \cite{ayoubPhD1,ayoubPhD2} and written in the language of algebraic derivators, a geometric enrichment of the notion of a triangulated derivator. From the standpoint of homotopy theory, it is natural to expect our main theorem to lift to the stable $\infty$-category of motives; this would require an $\infty$-categorical enhancement of Ayoub's six functor formalism.
    
    We also study a dual version of our main theorem (Theorem \ref{thm: main theorem dual}) where we are interested in describing the object $j_*j^*\F$. Due to the lack of duality in the general setting of algebraic derivators, we cannot simply repeat the proof. Instead,
    we rely on applying Poincar\'{e}--Verdier duality, but the latter is available at the motivic level only under certain assumptions (see \S \ref{subsec: dual version}). Note that, if we gave up on functoriality, then we would not need to work in the setting of algebraic derivators  and could prove the dual statement (without functoriality) in full generality. This strongly suggests that the dual statement (with functoriality) is true in full generality, even though we are not able to prove it with our methods. In any case, if one is only interested in working with realizations, one can first apply a realization functor to the main theorem and then dualize.
    
    \subsection*{Perspectives}
    
        A natural direction of research would be to try and apply our main theorem to prove motivic representation stability results in the spirit of the homological representation stability results of Petersen \cite{petersen}. Also, it would be desirable to clarify the general functoriality properties of our construction, beyond those already explored here. 
        
        A motivation for this project is the possibility to study a more general geometric setting mixing $j_!$ and $j_*$ extensions, depending on the strata. The corresponding motives can be viewed as relative cohomology motives on some blow-up of the ambient variety and are ubiquitous in the geometric study of periods (see, e.g., \cite{goncharovperiods} and the introduction of \cite{dupontbi}).

    \subsection*{Outline}

        In \S \ref{sec: 1} we review classical definitions and properties of poset (co)homology; to the best of our knowledge, the only original content is the introduction of connecting morphisms relating poset (co)homology complexes of different intervals in a poset. In \S \ref{sec: 2} we work in the setting of triangulated derivators and collect some tools to produce and study functorial Postnikov systems. In \S \ref{sec: sec main theorem} we apply those tools to our geometric setting and prove the main results.

    \subsection*{Acknowledgements}
    This work was partially written at the Max Planck Institute for Mathematics and the Hausdorff Institute for Mathematics in Bonn and we would like to thank these institutes for their hospitality and the excellent working conditions there. We also gratefully acknowledge support from the ANR grants PERGAMO (ANR-18-CE40-0017) and GEREPMOD (ANR-16-CE40-0010-01). 
    
    Many thanks to Joseph Ayoub, Martin Gallauer, Georges Maltsiniotis and Simon Pepin Lehal\-leur for stimulating conversations and clarifications on derivators and categories of motives.

\section{Poset (co)homology}\label{sec: 1}

    In this section we review poset (co)homology. To the best of our knowledge, the only original content is the introduction of connecting morphisms relating poset (co)homology complexes of different intervals in a poset. We fix a commutative ring with unit $\KM$ for the rest of this article, that will serve as a ring of coefficients.
    
	\subsection{Definition}
	\label{subsec:def C}
	
	    Let $P$ be a finite poset. We will sometimes make use of the extension $\widehat{P}=\{\hat{0}\}\cup P$ where $\hat{0}<p$ for all $p\in P$. For any element $x\in P$ we let $C_\bullet(x)$, denoted $C_\bullet^P(x)$ when we want to make dependence on $P$ explicit, be the chain complex whose degree $n$ component is the free $\mathbb{K}$-module on chains
		\begin{equation*}
		\left[x_1<\cdots<x_{n-1}<x_n=x\right]\ ,
		\end{equation*}
		and whose differential $\partial:C_n(x)\rightarrow C_{n-1}(x)$ is given by
		$$\partial\left[x_1<\cdots<x_{n-1}<x_n=x\right] = \sum_{i=1}^{n-1} (-1)^{i-1} \left[x_1<\cdots<\widehat{x_i}<\cdots <x_{n-1}<x_n=x\right]\ .$$
		 We let $h_\bullet(x)$ denote the homology of $C_\bullet(x)$. Up to a shift, $C_\bullet(x)$ is the (reduced) normalized chain complex of the nerve of the poset $P_{<x}=\{p\in P\; ,\; p<x\}$ and thus we have
		$$h_n(x)=H_n(C_\bullet(x))=\widetilde{H}_{n-2}(P_{<x})\ .$$
		We let $C^\bullet(x)$, or $C^\bullet_P(x)$ when we want to make dependence on $P$ explicit, denote the cochain complex dual to $C_\bullet(x)$ and use the same notation for the basis of chains and the (dual) basis of cochains. The differential $d:C^n(x)\rightarrow C^{n+1}(x)$ is given by
		$$d\left[ x_1<\cdots <x_{n-1}<x_n=x\right] = \sum_{i=1}^n (-1)^{i-1}\sum_{x_{i-1}<y<x_i}\left[ x_1<\cdots <x_{i-1}<y<x_i<\cdots <x_{n-1}<x_n=x \right]\ ,$$
		where by convention we have $x_0=\hat{0}$ in $\widehat{P}$. We let $h^\bullet(x)$ denote the cohomology of $C^\bullet(x)$ and we have:
		$$h^n(x)=H^n(C^\bullet(x))=\widetilde{H}^{n-2}(P_{<x})\ .$$
	    The following lemma is classical.
		
		\begin{lem}\label{lem:C is contractible}
		If $P$ has a least element $a$ then $C_\bullet(x)$ and $C^\bullet(x)$ are contractible for all $x>a$.
		\end{lem}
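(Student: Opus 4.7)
The plan is to exhibit an explicit contracting homotopy on $C_\bullet(x)$ and then dualize, rather than invoke the abstract fact that an acyclic bounded complex of free $\KM$-modules is contractible. The geometric picture is clear: since $x>a$, the element $a$ lies in $P_{<x}$ and is a least element there, so the nerve of $P_{<x}$ is a cone with apex $a$, hence contractible. The contracting homotopy will be the algebraic incarnation of this cone contraction.

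Concretely, I would define a degree $+1$ map $s\colon C_n(x)\to C_{n+1}(x)$ on basis chains by ``prepending $a$'':
\[
s[x_1<\cdots<x_n=x] = \begin{cases} [a<x_1<\cdots<x_n=x] & \text{if } x_1>a, \\ 0 & \text{if } x_1=a, \end{cases}
\]
and verify $\partial s + s\partial = \id_{C_\bullet(x)}$ by a direct computation. The verification splits into two cases. If $x_1=a$, then $\partial s[x_1<\cdots<x_n]=0$, and the only surviving term of $s\partial[x_1<\cdots<x_n]$ is the one coming from removing $x_1=a$, which reproduces $[x_1<\cdots<x_n]$. If $x_1>a$, then $\partial s[x_1<\cdots<x_n]$ contains a single boundary term equal to $[x_1<\cdots<x_n]$ (from removing the prepended $a$), together with interior terms; after the index shift $j=i-1$, these interior terms cancel term-by-term against those of $s\partial[x_1<\cdots<x_n]$, the opposite signs arising precisely because prepending $a$ shifts the chain indexing by one.

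For the cochain complex $C^\bullet(x)$, I would simply take the transpose $s^\vee\colon C^{n+1}(x)\to C^n(x)$ of $s$ and observe that it is a contracting homotopy for the dual differential: transposing $\partial s + s\partial = \id$ gives $d\,s^\vee + s^\vee d = \id$.

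The only real work is sign bookkeeping, which is entirely routine; I do not foresee any conceptual obstacle, and the argument is uniform in $x$ and in the choice of coefficient ring $\KM$.
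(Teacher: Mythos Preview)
Your proposal is correct and essentially identical to the paper's own proof: the paper also notes that the nerve of $P_{<x}$ is a cone with apex $a$, writes down the same contracting homotopy (prepending $a$, with the same case distinction on whether $x_1=a$), and then dualizes for $C^\bullet(x)$. Your write-up simply spells out the sign check that the paper leaves implicit.
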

	
		\begin{proof}
		The nerve of $P_{<x}=[a,x)$ is a cone over the nerve of the open interval $(a,x)$ and thus contractible. Concretely, a contracting homotopy $c:C_\bullet(x)\rightarrow C_{\bullet+1}(x)$ is provided by the formula:
		$$c\left[x_1<\cdots <x_{n-1}<x_n= x\right]=\begin{cases}  0 & \textnormal{if } x_1=a; \\ \left[a<x_1<\cdots<x_{n-1}<x_n= x\right] & \textnormal{if } x_1>a. \end{cases}$$
		The transpose of $c$ is a contracting homotopy for $C^\bullet(x)$.
		\end{proof}
		
		It is sometimes convenient to extend the definitions to $\hat{P}$ by defining $C_\bullet(\hat{0})$ and $C^\bullet(\hat{0})$ to be $\KM$ concentrated in degree zero.
		
		\begin{rem}\label{rem: functoriality complexes C}
		The complexes $C_\bullet$ have a certain functoriality property with respect to morphisms of posets. In this article we will only deal with functoriality with respect to isomorphisms (and in particular with respect to group actions). For $\alpha:P\to P'$ an isomorphism of posets we have for every $x\in P$ a natural isomorphism of chain complexes $C_\bullet(\alpha):C_\bullet^P(x)\to C_\bullet^{P'}(x')$ for $x'=\alpha(x)$. They satisfy $C_\bullet(\id)=\id$ and $C_\bullet(\beta\circ\alpha)=C_\bullet(\beta)\circ C_\bullet(\alpha)$. Dually we have natural isomorphisms of cochain complexes $C^\bullet(\alpha):C^\bullet_{P'}(x')\to C^\bullet_P(x)$ that satisfy $C^\bullet(\id)=\id$ and $C^\bullet(\beta\circ\alpha)=C^\bullet(\alpha)\circ C^\bullet(\beta)$.
		\end{rem}
		
	\subsection{The connecting maps}\label{subsec: connecting morphisms}
	
		For $x<y$ in $P$ we define a map
		$$b_x^y: C_{\bullet+1}(y) \rightarrow C_{\bullet}(x)$$
		by setting 
		$$b_x^y \left[x_1<\cdots<x_{n}<x_{n+1}=y\right]=\begin{cases} (-1)^{n}\left[x_1<\cdots <x_{n}=x\right] & \textnormal{if } x_{n}=x; \\ 0 & \textnormal{otherwise.}\end{cases}$$
	
		\begin{lem}\label{lem: commutator of b and d}
		We have 
		$$(\partial b_x^y + b_x^y\partial)\left[x_1<\cdots < x_{n}<x_{n+1}=y\right] = \begin{cases}  \left[x_1<\cdots <x_{n-1}=x\right] & \textnormal{if } x_{n-1}=x; \\ 0 & \textnormal{otherwise.}\end{cases}$$
		\end{lem}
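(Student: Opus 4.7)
The proof is a direct calculation; I would expand both $\partial b_x^y(c)$ and $b_x^y(\partial c)$ on the chain $c = [x_1<\cdots<x_n<x_{n+1}=y]$ and then analyze which terms survive. The natural case split is given by the (mutually exclusive) conditions $x_n = x$, $x_{n-1}=x$, or neither, since $x_{n-1}<x_n$ precludes both from equalling $x$ simultaneously.

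First I would compute $\partial b_x^y(c)$. By definition $b_x^y(c)$ vanishes unless $x_n=x$, in which case it equals $(-1)^n[x_1<\cdots<x_n=x]$. Applying $\partial$ then gives $(-1)^n\sum_{i=1}^{n-1}(-1)^{i-1}[x_1<\cdots\widehat{x_i}\cdots<x_n=x]$. Next I would compute $b_x^y(\partial c)$. Here $\partial c = \sum_{i=1}^{n}(-1)^{i-1}[x_1<\cdots\widehat{x_i}\cdots<x_{n+1}=y]$, with the sum running only to $n$ because the last element $y$ is never removed. Applying $b_x^y$ term by term, I would observe that the summand survives exactly when the new penultimate element equals $x$: for $i<n$ that penultimate element is still $x_n$, so the term contributes only if $x_n=x$; for $i=n$ it becomes $x_{n-1}$, so the term contributes only if $x_{n-1}=x$.

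Combining the two computations in each case yields the claim. If $x_n=x$, then the $i=n$ term of $b_x^y(\partial c)$ vanishes (since $x_{n-1}<x_n=x$) and the remaining $i<n$ contributions give $(-1)^{n-1}\sum_{i=1}^{n-1}(-1)^{i-1}[x_1<\cdots\widehat{x_i}\cdots<x_n=x]$, which exactly cancels $\partial b_x^y(c)$ by the sign $(-1)^n+(-1)^{n-1}=0$. If instead $x_{n-1}=x$, then $\partial b_x^y(c)=0$ and only the $i=n$ term survives in $b_x^y(\partial c)$, contributing $(-1)^{n-1}\cdot(-1)^{n-1}[x_1<\cdots<x_{n-1}=x]=[x_1<\cdots<x_{n-1}=x]$. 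In the remaining case both terms vanish trivially.

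There is no real obstacle here: the computation is entirely mechanical once one observes that $b_x^y$ detects the element immediately below $y$. The only thing requiring mild care is the sign bookkeeping, in particular the fact that the degree-dependent sign $(-1)^n$ in the definition of $b_x^y$ is precisely what enforces the Koszul-type relation $\partial b_x^y + b_x^y\partial$ to behave as a genuine partial homotopy, isolating the contribution of the element $x_{n-1}=x$.
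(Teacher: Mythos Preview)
Your proof is correct and follows essentially the same approach as the paper: a direct computation of both $\partial b_x^y$ and $b_x^y\partial$ on a generic chain, followed by the same three-way case split on whether $x_n=x$, $x_{n-1}=x$, or neither. The only cosmetic difference is that the paper expands $b_x^y\partial$ first, while you begin with $\partial b_x^y$; the sign bookkeeping and the resulting cancellations are identical.
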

		
		\begin{proof}
		We compute, for $X=\left[x_1<\cdots <x_{n}<x_{n+1}=y\right]$:
		$$ b_x^y\partial X  =  \sum_{i=1}^{n-1}(-1)^{i-1}b_x^y\left[x_1<\cdots<\widehat{x_i}<\cdots<x_{n}<x_{n+1}=y\right] +(-1)^{n-1} b_x^y\left[x_1<\cdots <x_{n-1}<x_{n+1}=y\right]\ .$$
		If $x_{n-1}=x$ then $x_{n}\neq x$ and we have $b_x^y\partial X=\left[x_1<\cdots <x_{n-1}=x\right]$ and $\partial b_x^y X=\partial 0=0$, which proves the first part of the claim.
		If $x_{n-1}\neq x$ and $x_{n}\neq x$ then all terms vanish and we get $b_x^y\partial X=\partial b_x^y X=0$. If $x_{n-1}\neq x$ and $x_{n}=x$ then we get
		$$b_x^y\partial X = \sum_{i=1}^{n-1}(-1)^{n-i}\left[x_1<\cdots<\widehat{x_i}<\cdots<x_{n}=x\right] = - \partial b_x^y X\ ,$$
		which finishes the proof.
		\end{proof}
		
		We write $x\lessdot y$ when $y$ covers $x$ in $P$, i.e., when $x<y$ and there is no $z\in P$ such that $x<z<y$.
		
		\begin{lem}\label{lem: b morphism homology}
		\phantom{}
		\begin{enumerate}[1)]
		\item For $x\lessdot y$ in $P$, $b_x^y:C_{\bullet+1}(y)\rightarrow C_{\bullet}(x)$ is a morphism of complexes.
		\item Let $x<z$ in $P$ such that every $y\in (x,z)$ satisfies $x\lessdot y \lessdot z$. Then the morphism of complexes
		$$\sum_{x<y<z}b_x^y b_y^z : C_{\bullet+2}(z)\rightarrow C_{\bullet}(x)$$
		is homotopic to zero.
		\end{enumerate}
		\end{lem}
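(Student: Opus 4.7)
The plan is to deduce (1) directly from Lemma \ref{lem: commutator of b and d}, and to prove (2) by writing down an explicit chain homotopy.

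For (1), since $b_x^y$ has degree $-1$, it is a morphism of complexes iff its graded commutator $\partial b_x^y + b_x^y \partial$ vanishes. By Lemma \ref{lem: commutator of b and d}, that commutator evaluated on $[x_1 < \cdots < x_n < x_{n+1} = y]$ is $[x_1 < \cdots < x_{n-1} = x]$ when $x_{n-1} = x$, and zero otherwise (with the convention $x_0 = \hat{0}$). For $n=1$ the condition reads $\hat{0} = x$, which never holds. For $n \geq 2$ it forces $x = x_{n-1} < x_n < x_{n+1} = y$, producing an element of $(x,y)$ and contradicting $x \lessdot y$. So the commutator vanishes identically.

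For (2), note first that by (1) each summand $b_x^y b_y^z$ is a composition of chain maps of degree $-1$, hence a chain map of degree $-2$; so $f := \sum_{x<y<z} b_x^y b_y^z$ is a well-defined chain map. Unwinding the definitions, for $X = [x_1 < \cdots < x_{n+1} < x_{n+2} = z]$ only the term $y = x_{n+1}$ (automatically in $(x,z)$) contributes, and after multiplying the signs $(-1)^{n+1}$ from $b_y^z$ and $(-1)^n$ from $b_x^y$ one obtains
$$f(X) \;=\; \begin{cases} -[x_1 < \cdots < x_n = x] & \text{if } x_n = x, \\ 0 & \text{otherwise.} \end{cases}$$
I would then exhibit the homotopy $h : C_{\bullet+2}(z) \to C_{\bullet+1}(x)$ defined by
$$h[x_1 < \cdots < x_{n+1} < x_{n+2} = z] \;:=\; \begin{cases} (-1)^n [x_1 < \cdots < x_{n+1} = x] & \text{if } x_{n+1} = x, \\ 0 & \text{otherwise,} \end{cases}$$
which heuristically ``drops the $z$'' from a chain already ending at $x < z$, and verify $\partial h + h \partial = f$ by a short case analysis. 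In the case $x_{n+1} = x$, the term $\partial h X$ (with sign $(-1)^n$) cancels against the ``interior face'' contributions of $h \partial X$ (with sign $(-1)^{n-1}$), while the ``top face'' $i = n+1$ contributes zero because $x_n < x_{n+1} = x$; in the case $x_{n+1} \neq x$ but $x_n = x$, $\partial h X$ vanishes and only the $i = n+1$ term in $h\partial X$ survives, producing the correct $-[x_1 < \cdots < x_n = x]$; and when neither $x_n$ nor $x_{n+1}$ equals $x$, both sides are zero.

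The only mildly subtle point is sign and length bookkeeping, since three different lengths $n$, $n+1$, $n+2$ (for $C_n(x)$, $C_{n+1}(y)$, $C_{n+2}(z)$) appear simultaneously, together with the conventions that chains are indexed to end at the distinguished element and that $x_0 = \hat{0}$. Once the shape of $h$ is guessed, the verification is mechanical; the covering hypothesis $x \lessdot y \lessdot z$ for all $y \in (x,z)$ enters only through part (1), to guarantee that each individual $b_x^y b_y^z$ is already a chain map.
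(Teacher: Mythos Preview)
Your proof is correct and follows the same route as the paper: part (1) is identical, and in part (2) you compute $f$ explicitly and then exhibit a homotopy. The only difference is packaging: your homotopy $h$ is precisely $-b_x^z$, and the paper observes this and simply invokes Lemma~\ref{lem: commutator of b and d} once more (with $z$ in place of $y$) to obtain $\sum_{x<y<z} b_x^y b_y^z = -\partial b_x^z - b_x^z \partial$, rather than redoing the case analysis by hand.
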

		
		The first part of the lemma implies that we get connecting morphisms $b_x^y:h_{\bullet+1}(y)\to h_\bullet(x)$ in homology, for $x\lessdot y$.
		
		\begin{proof}
		\begin{enumerate}[1)]
		\item For $x_{n-1}<x_{n}<x_{n+1}=y$ we cannot have $x_{n-1}=x$ since $y$ covers $x$. Then Lemma \ref{lem: commutator of b and d} implies that $\partial b_x^y = - b_x^y\partial$, thus $b_x^y$ is a morphism of complexes.
		\item We have
		$$\sum_{x<y<z}b_x^yb_y^z\left[x_1<\cdots <x_{n+1}<x_{n+2}=z\right]=\begin{cases}  - \left[x_1<\cdots <x_{n}=x\right] & \textnormal{if } x_{n}=x; \\ 0 & \textnormal{otherwise.}\end{cases}$$
		Thanks to Lemma \ref{lem: commutator of b and d} this can be rewritten as
		$$\sum_{x<y<z}b_x^y b_y^z=-\partial b_x^z-b_x^z\partial\ ,$$
		which proves the claim.
		\end{enumerate}
		\end{proof}
		
		By duality we get a map that we denote by the same symbol, since there is no risk of confusion:
		$$b_x^y:C^{\bullet}(x)\rightarrow C^{\bullet+1}(y)\ .$$
		It is defined by the formula
		$$b_x^y\left[x_1<\cdots<x_{n}=x\right] = (-1)^{n} \left[x_1<\cdots<x_{n}=x<x_{n+1}=y\right]\ .$$

		\begin{lem}\label{lem: b morphism cohomology}
		\phantom{}
		\begin{enumerate}[1)]
		\item For $x\lessdot y$ in $P$, $b_x^y:C^{\bullet}(x)\rightarrow C^{\bullet+1}(y)$ is a morphism of complexes.
		\item Let $x<z$ in $P$ such that every $y\in (x,z)$ satisfies $x\lessdot y \lessdot z$. Then the morphism of complexes
		$$\sum_{x<y<z}b_y^z b_x^y : C^{\bullet}(x)\rightarrow C^{\bullet+2}(z)$$
		is homotopic to zero.
		\end{enumerate}
		\end{lem}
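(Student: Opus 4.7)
The plan is to deduce this cohomological statement from the homological one (Lemma \ref{lem: b morphism homology}) via $\KM$-linear duality, since by definition $C^\bullet(x)$ is the complex $\Hom_\KM(C_\bullet(x),\KM)$.

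First I would verify that the map $b_x^y: C^\bullet(x) \to C^{\bullet+1}(y)$ defined by the explicit formula in the statement is precisely the transpose of the homological map $b_x^y: C_{\bullet+1}(y) \to C_\bullet(x)$. This is a short direct check on basis elements: evaluating the transpose of the homological $b_x^y$ on a cochain $[x_1<\cdots<x_n=x]^*$ and then on a chain $[y_1<\cdots<y_{n+1}=y]$, the only non-zero contribution comes from the case $y_n=x$ and $(y_1,\ldots,y_n)=(x_1,\ldots,x_n)$, producing the sign $(-1)^n$ as required.

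Once this identification is made, part 1) is immediate: by Lemma \ref{lem: b morphism homology}(1), the homological $b_x^y$ commutes with $\partial$ when $x\lessdot y$, and the transpose of a chain map is a cochain map; hence the cohomological $b_x^y$ commutes with $d$. For part 2), I would apply Lemma \ref{lem: b morphism homology}(2), which produces a chain homotopy $h : C_{\bullet+2}(z) \to C_{\bullet+1}(x)$ satisfying $\sum_{x<y<z} b_x^y b_y^z = \partial h + h\partial$. Taking the transpose and using that $(b_x^y b_y^z)^* = (b_y^z)^*(b_x^y)^* = b_y^z b_x^y$ in the notation of the text, we obtain $\sum_{x<y<z} b_y^z b_x^y = h^* d + d h^*$, exhibiting $h^*$ as the desired cochain null-homotopy.

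There is essentially no obstacle here beyond the bookkeeping of signs and the reversal of composition order under transposition; these are standard, and since the formula in the statement was chosen to match the transpose, everything lines up on the nose. Alternatively, one could mimic the proofs of Lemma \ref{lem: commutator of b and d} and Lemma \ref{lem: b morphism homology} on the dual basis directly, but the duality argument is shorter and makes the symmetry between $C_\bullet$ and $C^\bullet$ manifest.
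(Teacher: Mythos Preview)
Your proposal is correct and follows exactly the paper's approach: the paper's proof is the single sentence ``This is the dual of Lemma \ref{lem: b morphism homology},'' and you have simply unpacked what that dualization entails (transpose of a chain map is a cochain map, transpose of a homotopy is a homotopy, with the explicit check that the two maps called $b_x^y$ are indeed transposes of one another).
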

		
		\begin{proof}
		This is the dual of Lemma \ref{lem: b morphism homology}.
		\end{proof}
		
		It is sometimes convenient to extend the definitions to $\hat{P}$. Indeed, for $\hat{0}\lessdot y$, i.e., for $y$ a minimal element of $P$, we can set $b_{\hat{0}}^y:C_{\bullet+1}(y)\to C_\bullet(\hat{0})$ to be the natural (iso)morphism of complexes. The same goes in cohomology for $b_{\hat{0}}^y:C^\bullet(\hat{0})\to C^{\bullet+1}(y)$. One easily checks that Lemma \ref{lem: b morphism homology} and Lemma \ref{lem: b morphism cohomology} also apply to the case $x=\hat{0}$.
		
		\begin{rem}\label{rem: graded poset OS}
		Let us assume for simplicity that the poset $\hat{P}$ is graded, i.e., any two maximal chains between two elements $x<y$ in $\hat{P}$ have the same length. For $x\in \hat{P}$ let $\rk(x)$ denote the length of a maximal chain from $\hat{0}$ to $x$. In many geometric cases we have, for every $x\in\hat{P}$:
		$$h_n(x)=0 \quad \mbox{ for }\quad  n\neq \rk(x)\ ,$$
		and we simply write $h(x)=h_{\rk(x)}(x)$. (This implies that the cohomology of $C^\bullet(x)$ is concentrated in degree $\rk(x)$ and that we have $h^{\rk(x)}(x)\simeq h(x)^\vee$.) This condition is satisfied, e.g., if the poset $\hat{P}$ is Cohen--Macaulay \cite{baclawskicohenmacaulay, bjornergarsiastanley}. In this case we get a chain complex $(h,b)$ where
		$$h_n=\bigoplus_{\substack{x\in \hat{P} \\ \rk(x)=n}}h(x)$$
		and $b:h_{n+1}\to h_n$ is induced by the connecting maps $b_x^y$ for $x<y$. One can also prove that these connecting maps induce acyclic complexes of $\KM$-modules, for every $x\in P$:
		$$0\longrightarrow h(x) \longrightarrow \bigoplus_{\substack{y\in\hat{P}, y<x \\ \rk(y)=\rk(x)-1}} h(y) \longrightarrow \bigoplus_{\substack{z\in\hat{P}, z<x \\ \rk(z)=\rk(x)-2}} h(z) \longrightarrow \cdots \longrightarrow h(\hat{0})\longrightarrow 0 \ .$$
		This allows one to define $h(x)$ together with the connecting morphisms $b_u^x$ by induction on $\rk(x)$.
		
		A typical example of a Cohen--Macaulay poset is the poset of flats of a matroid (for instance, the poset of strata of a central hyperplane arrangement); in this case $(h,b)$ is the underlying chain complex of the Orlik--Solomon algebra of the matroid \cite{orliksolomon, orlikteraobook}.
		\end{rem}

	\subsection{Interpretation of poset cohomology as homotopy limit}\label{subsec: poset cohomology holim}
	
	We will now consider the abelian category of representations of the finite poset $P$, i.e., the category $(\KMMod)^P$ of functors from $P$ viewed as a category to the category of $\KM$-modules. Since $\KMMod$ is abelian, it admits finite limits, so we have a limit functor $\lim_P : (\KMMod)^P \to \KMMod$, which is right adjoint to the constant functor $\KMMod \to (\KMMod)^P$; since it has a left adjoint, it is left exact, and we may consider the right derived functor $\Rlim_P : D((\KMMod)^P) \to D(\KMMod)$. In anticipation of the next section, we will call it homotopy limit and denote it by $\holim_P$. We now prove and discuss the following interpretation of the complexes $C^\bullet(x)$ (see also \cite{tosteson} for a similar discussion).
	
		\begin{prop}\label{prop:Cx as holim}
	    For $x\in P$ we denote by $\KM_x$ the representation of $P$ defined by $\KM_x(y)=\KM$ if $y=x$ and zero otherwise. We have a canonical isomorphism in $D(\KMMod)$:
	    $$
	    \holim_P\KM_x \simeq C^{\bullet+1}(x)\ .
	    $$
	    \end{prop}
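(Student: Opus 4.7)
The plan is to compute $\holim_P \KM_x$ via the identification
$$\holim_P \KM_x = R\Hom_{(\KMMod)^P}(\underline{\KM}, \KM_x),$$
where $\underline{\KM}$ denotes the constant diagram, and then to exhibit an explicit projective resolution of $\underline{\KM}$ whose pairing with $\KM_x$ recovers $C^{\bullet+1}(x)$.

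The first ingredient I would use is the collection of representables $h^y \in (\KMMod)^P$ for $y \in P$, defined by $h^y(z) = \KM$ if $y \leq z$ and $0$ otherwise. Each is projective, since Yoneda yields $\Hom_{(\KMMod)^P}(h^y, F) = F(y)$ for every $F$. I would then assemble these into the standard simplicial bar complex
$$\cdots \to \bigoplus_{y_0 \leq y_1 \leq y_2} h^{y_2} \to \bigoplus_{y_0 \leq y_1} h^{y_1} \to \bigoplus_{y_0} h^{y_0} \twoheadrightarrow \underline{\KM},$$
with face maps dropping $y_i$, the last one using the canonical natural transformation $h^{y_n} \to h^{y_{n-1}}$ induced by $y_{n-1} \leq y_n$. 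This is the usual acyclic bar resolution of a constant diagram over a small category.

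Applying $\Hom_{(\KMMod)^P}(-, \KM_x)$ produces a cosimplicial $\KM$-module whose degree-$n$ term is $\bigoplus_{y_0 \leq \cdots \leq y_n} \KM_x(y_n)$, which vanishes unless $y_n = x$. Passing to the quasi-isomorphic normalized subcomplex kills degenerate chains and keeps only strict chains $y_0 < y_1 < \cdots < y_n = x$; the free $\KM$-module on these is precisely $C^{n+1}(x)$, yielding the claimed identification at the level of underlying graded modules.

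The last step would be to match the differentials. The cofaces $d^i$ for $0 \leq i < n$ correspond to inserting a new element at position $i$ of the chain, while the last coface $d^n$ uses the pushforward $\KM_x(y_{n-1}) \to \KM_x(x)$; this vanishes on the normalized complex because $y_{n-1} < x$ forces $\KM_x(y_{n-1}) = 0$. The alternating sum of the surviving cofaces, expressed in the basis of strict chains ending at $x$, then reproduces the explicit formula for $d$ from \S\ref{subsec:def C}, up to the shift by one. The hardest part will be purely combinatorial bookkeeping: tracking signs through the degree shift $C^\bullet \rightsquigarrow C^{\bullet+1}$ and verifying that the boundary coface corresponds correctly to the insertion at position $i = 1$ with the convention $x_0 = \hat{0}$.
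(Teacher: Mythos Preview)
Your proposal is correct and gives a valid proof, but it takes a different route from the paper's own argument. The paper resolves the \emph{target} $\KM_x$ by a complex $R^\bullet_x$ of $\lim_P$-acyclic representations $\KM_{\leq y}$ (those with $\KM_{\leq y}(z)=\KM$ for $z\leq y$ and zero otherwise), built directly on \emph{strict} chains ending at $x$; applying $\lim_P$ then yields $C^{\bullet+1}(x)$ on the nose, with no normalization step. You instead resolve the \emph{source} $\underline{\KM}$ by the simplicial bar complex of projectives $h^y$ and apply $\Hom(-,\KM_x)$, which forces a passage to the normalized subcomplex and a relabelling $y_i\leftrightarrow x_{i+1}$ to recover $C^{\bullet+1}(x)$.

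Both approaches are standard homological algebra; yours has the appeal of invoking only the usual bar resolution and Yoneda, while the paper's has two practical advantages in context. First, the strict-chain resolution $R^\bullet_x$ lands immediately on the desired complex, avoiding the sign and normalization bookkeeping you flag as the hardest part. Second, and more importantly for what follows, the very same resolution $R^\bullet_x$ is reused in Proposition~\ref{prop: connecting morphism holim} to identify $\holim_P a_x^y$ with $b_x^y[1]$: one builds a compatible resolution of the extension $\KM_x^y$ out of $R^\bullet_x$ and $R^\bullet_y$, and the connecting morphism is then read off directly. Your bar-resolution approach would also yield this, but the translation through normalization makes the identification with the explicit $b_x^y$ less transparent. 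A minor indexing slip: the vanishing coface from degree $n$ to $n+1$ is $d^{n+1}$, not $d^n$ (there are $n+2$ cofaces), though your argument for why it vanishes is correct.
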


	    In order to compute the functor $\holim_P$ we introduce convenient $\lim_P$-acyclic representations of $P$. For $x\in P$ and $M\in \KMMod$, we let $M_{\leq x}\in(\KMMod)^P$ denote the representation given by $M_{\leq x}(y)=M$ if $y \leq x$ and zero otherwise, the transition morphisms being the identity of $M$ or zero. 
	    
	    \begin{lem}\label{lem: Mleqx is limPacyclic}
	    The representation $M_{\leq x}$ is $\lim_P$-acyclic.
	    \end{lem}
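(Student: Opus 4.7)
The plan is to identify $M_{\leq x}$ as a right Kan extension along the sieve inclusion $j: P_{\leq x}\hookrightarrow P$, and then to deduce acyclicity from the contractibility of the classifying space of $P_{\leq x}$. Since $j$ is fully faithful, the restriction functor $j^{*}: (\KMMod)^P\to(\KMMod)^{P_{\leq x}}$ admits a right adjoint $j_{*}$ given by the pointwise formula $j_{*}F(y)=\lim_{(y\downarrow j)}F$. The comma category $(y\downarrow j)=\{z\in P_{\leq x}: z\geq y\}$ has $y$ as initial object when $y\leq x$, and is empty when $y\not\leq x$ (for then $z\geq y$ and $z\leq x$ would force $y\leq x$). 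It follows that $j_{*}F(y)=F(y)$ for $y\leq x$ and $j_{*}F(y)=0$ otherwise, with the obvious transition maps; taking $F$ to be the constant representation with value $M$, one recovers precisely $M_{\leq x}$. This pointwise description also shows at once that $j_{*}$ is an exact functor.

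Next I would invoke the following Grothendieck-type composition argument. Since $j^{*}$ is exact, its right adjoint $j_{*}$ preserves injectives; moreover $\lim_P\circ j_{*}$ equals $\lim_{P_{\leq x}}$, both being right adjoint to the constant functor $\KMMod\to(\KMMod)^{P_{\leq x}}$. Choosing an injective resolution $M\to I^{\bullet}$ of the constant representation with value $M$ in $(\KMMod)^{P_{\leq x}}$, exactness of $j_{*}$ yields an exact complex $j_{*}M\to j_{*}I^{\bullet}$ whose terms are injective, hence $\lim_P$-acyclic, so that
$$\holim_P M_{\leq x}\simeq\holim_{P_{\leq x}}M\ .$$

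Finally, the right-hand side computes the cohomology of the classifying space of $P_{\leq x}$ with constant coefficients in $M$. Since $x$ is the maximum of $P_{\leq x}$, the natural transformation from the identity functor on $P_{\leq x}$ to the constant functor at $x$ contracts this classifying space to a point, so the cohomology is $M$ in degree $0$ and vanishes in higher degrees; in particular $M_{\leq x}$ is $\lim_P$-acyclic and its limit is $M$. I do not foresee a genuine obstacle here: the whole argument rests on the observation that $M_{\leq x}$ is a right Kan extension along a sieve, which automatically makes the extension by zero exact.
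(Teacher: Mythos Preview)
Your proof is correct. Both your argument and the paper's rest on the same underlying observation---that $M\mapsto M_{\leq x}$ is a right adjoint to an exact functor and hence preserves injectives---but they package it differently. The paper works directly with the functor $(-)_{\leq x}:\KMMod\to(\KMMod)^P$, exhibits the adjunction $\Hom_{(\KMMod)^P}(T,M_{\leq x})\simeq\Hom_{\KMMod}(T(x),M)$, and then reads off immediately that $\lim_P\circ(-)_{\leq x}=\Id_{\KMMod}$, so that $\Rlim_P M_{\leq x}\simeq M$ in one stroke. You instead factor through the sieve inclusion $j:P_{\leq x}\hookrightarrow P$, identify $M_{\leq x}=j_*(\mathrm{const}_M)$, and reduce to $\holim_{P_{\leq x}}$ of a constant diagram, which you dispatch via contractibility of the nerve. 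Your route is one step longer but makes the mechanism more transparent: it is the general fact that right Kan extension along a sieve is exact extension by zero, together with the triviality of homotopy limits over a poset with a maximum. The paper's shortcut amounts to precomposing your two adjunctions into one, since $(-)_{\leq x}=j_*\circ p^*$ and the composite left adjoint $\colim_{P_{\leq x}}\circ\, j^*$ is nothing but evaluation at the terminal object $x$.
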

	    
	    \begin{proof}
	    The functor
	    $$(-)_{\leq x}:\KMMod \longrightarrow (\KMMod)^P\; , \; M\mapsto M_{\leq x}$$
	    is exact and sends injectives to injectives. Indeed, for $T\in (\KMMod)^P$ we have an isomorphism
	    $$\Hom_{(\KMMod)^P}(T,M_{\leq x}) \simeq \Hom_{\KMMod}(T(x),M)\ .$$
	    and thus the functor $\Hom_{(\KMMod)^P}(-,M_{\leq x})$ is exact if $M$ is injective. Thus, we have isomorphisms:
	    $$\Rlim_P(M_{\leq x}) \simeq \Rlim_P\circ R(-)_{\leq x}(M) \simeq R(\lim_P\circ (-)_{\leq x})(M) \simeq M \simeq \lim_P(M_{\leq x})\ .$$
	    The first isomorphism follows from the fact that $(-)_{\leq x}$ is exact, the second follows from the fact that it sends injectives to injectives, the third and fourth from the equality $\lim_P\circ (-)_{\leq x}=\Id_{\KMMod}$. The claim follows.
	    \end{proof}

	    \begin{proof}[Proof of Proposition \ref{prop:Cx as holim}]
        For $z\leq y$ we have a canonical morphism $\KM_{\leq y}\rightarrow \KM_{\leq z}$. Moreover, those morphisms compose functorially. Using them we can form a resolution
	    $$0\rightarrow \KM_x \rightarrow \KM_{\leq x}\rightarrow \bigoplus_{y<x} \KM_{\leq y} \rightarrow \bigoplus_{z<y<x} \KM_{\leq z}\rightarrow \cdots$$
	    More precisely we set
	    $$R^n_x = \bigoplus_{[x_1<\cdots <x_n<x_{n+1}=x]} \KM_{\leq x_1}\ .$$
	    In analogy with the construction of the complexes $C^\bullet(x)$ of \S \ref{subsec:def C}, we define a differential $d:R^n_x\rightarrow R^{n+1}_x$. Its component indexed by chains $[x_1<\cdots <x_n<x_{n+1}=x]$ on the source and $[x_1<\cdots<x_{i-1}<y<x_i<\cdots <x_n<x_{n+1}=x]$ and on the target equals $(-1)^i$ times the natural map (the latter being the identity for $i>1$ and the canonical morphism $\KM_{\leq x_1}\rightarrow \KM_{\leq y}$ for $i=1$). The other components are zero. One easily checks that we get a complex $R^\bullet_x$ of representations of $P$ which is such that 
	    $$R^\bullet_x(a)=\begin{cases} \KM & \hbox{ if } a=x; \\ C^{\bullet+1}_{[a,x]}(x) & \hbox{ if } a<x; \\ 0 & \hbox{ otherwise.}\end{cases}$$
	    By Lemma \ref{lem:C is contractible}, the complex $C^\bullet_{[a,x]}(x)$ is contractible and we thus get a resolution $\KM_x\stackrel{\sim}{\rightarrow}R^\bullet_x$.
	    
	    By Lemma \ref{lem: Mleqx is limPacyclic} this resolution is $\lim_P$-acyclic. Hence, it can be used to compute $\holim_P \KM_x = \Rlim_P \KM_x$. Since each $\lim_P \KM_{\leq x_1}$ is just $\KM$, applying $\lim_P$ to the resolution gives $\lim_PR^\bullet_x\simeq C^{\bullet+1}(x)$, and the result follows.
	    \end{proof}
	    
	    \begin{rem}
	    The resolution appearing in the proof of Proposition \ref{prop:Cx as holim} is a Bousfield--Kan resolution \cite[Chapter XI]{bousfieldkan}.
	    \end{rem}
	    
	    We now turn to the interpretation of the connecting morphisms $b_x^y$. For $x<y$ in $P$ we let $\KM_x^y$ denote the representation of $P$ defined by $\KM_x^y(z)=\KM$ if $z\in \{x,y\}$ and zero otherwise, the transition morphism $\KM_x^y(x)\rightarrow \KM_x^y(y)$ being the identity. We have a short exact sequence in $(\KMMod)^P$:
	    $$0\longrightarrow \KM_y\longrightarrow \KM_x^y \longrightarrow \KM_x\longrightarrow 0\ ,$$
	    which induces a distinguished triangle $\KM_y\longrightarrow \KM^y_x \longrightarrow \KM_x \stackrel{+1}{\longrightarrow}\ $ in $D((\KMMod)^P)$. We denote by 
	    $$a_x^y:\KM_x\rightarrow \KM_y[1]$$ 
	    the connecting morphism.
	    
	    \begin{prop}\label{prop: connecting morphism holim}
	    Assume that $x\lessdot y$. We have a commutative square in $D(\KMMod)$:
	    $$\xymatrixcolsep{5pc}\xymatrix{
	    \holim_P\KM_x \ar[r]^-{\holim_Pa_x^y}\ar[d]_{\simeq}& \holim_P\KM_y[1]\ar[d]^{\simeq}\\
	    C^{\bullet+1}(x) \ar[r]_{b_x^y[1]}& C^{\bullet+2}(y)
	    }$$
	    where the vertical isomorphisms are those of Proposition \ref{prop:Cx as holim}.
	    \end{prop}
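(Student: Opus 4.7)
The plan is to lift the short exact sequence $0\to\KM_y\to\KM_x^y\to\KM_x\to 0$ of representations of $P$ to a short exact sequence of $\lim_P$-acyclic resolutions, apply $\lim_P$, and identify the resulting connecting morphism with $b_x^y[1]$ via Proposition \ref{prop:Cx as holim}.

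The first step is to construct a resolution $\widetilde{R}^\bullet$ of $\KM_x^y$ fitting into a short exact sequence of complexes
$$0 \longrightarrow R^\bullet_y \longrightarrow \widetilde{R}^\bullet \longrightarrow R^\bullet_x \longrightarrow 0$$
that lifts the given short exact sequence on $H^0$. I would set $\widetilde{R}^n := R^n_y \oplus R^n_x$ as a graded object, endowed with the differential $d = \begin{pmatrix} d_y & \phi \\ 0 & d_x \end{pmatrix}$, where $\phi : R^n_x \to R^{n+1}_y$ sends a basis chain $[x_1 < \cdots < x_{n+1} = x]$ to $(-1)^{n+1}[x_1 < \cdots < x_{n+1} = x < y]$, now viewed as a chain ending at $y$. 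The assumption $x \lessdot y$ is used crucially to verify $d_y\phi + \phi d_x = 0$: the only term in either $d_y\phi$ or $\phi d_x$ that could fail to cancel with its partner would come from inserting an element strictly between $x$ and $y$ in the chain $[x_1 < \cdots < x < y]$, but the interval $(x,y)$ is empty by hypothesis. Fiberwise inspection using Lemma \ref{lem:C is contractible} on the intervals $[a,x]$ and $[a,y]$, together with the long exact sequence of the short exact sequence of complexes at each $a \in P$, then confirms that $\widetilde{R}^\bullet$ is indeed a resolution of $\KM_x^y$.

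Each term of $\widetilde{R}^\bullet$ is a direct sum of representations of the form $\KM_{\leq z}$, hence $\lim_P$-acyclic by Lemma \ref{lem: Mleqx is limPacyclic}; applying $\lim_P$ termwise yields a short exact sequence of complexes of $\KM$-modules
$$0 \longrightarrow \lim_P R^\bullet_y \longrightarrow \lim_P \widetilde{R}^\bullet \longrightarrow \lim_P R^\bullet_x \longrightarrow 0$$
that is a chain-level representative of $\holim_P$ applied to the original short exact sequence, so its connecting morphism represents $\holim_P a_x^y$ under the identifications of Proposition \ref{prop:Cx as holim}. Since the resolution splits as a graded module, a cocycle $\alpha \in \lim_P R^n_x$ lifts to $(0,\alpha) \in \lim_P \widetilde{R}^n$, and $d(0,\alpha) = (\phi\alpha, 0)$, so the connecting morphism in cohomology is represented at the chain level by $\phi$ itself. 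Under the isomorphism $\lim_P R^\bullet_x \simeq C^{\bullet+1}(x)$ (and its analogue for $y$), $\phi$ translates into the operation sending $[x_1 < \cdots < x_n = x]$ to $(-1)^n[x_1 < \cdots < x_n = x < y]$, which is exactly the formula defining $b_x^y$ on cochains in \S\ref{subsec: connecting morphisms}. The main technical difficulty will be sign bookkeeping: the identifications $\lim_P R^\bullet \simeq C^{\bullet+1}$ are not the naive ones (the differentials on $R$ and $C$ differ by a sign in each degree), so the $(-1)^{n+1}$ in the definition of $\phi$ has to be calibrated against the signs in those identifications in order to recover $b_x^y[1]$ on the nose rather than up to a spurious sign.
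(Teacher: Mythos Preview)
Your approach is essentially identical to the paper's: both build the resolution of $\KM_x^y$ as the shifted mapping cone of the map $R^\bullet_x\to R^{\bullet+1}_y$ induced by $b_x^y$, verify it is $\lim_P$-acyclic, and then read off the connecting morphism after applying $\lim_P$. The only difference is that the paper makes the augmentation $\KM_x^y\to \widetilde{R}^0=\KM_{\leq x}\oplus\KM_{\leq y}$ explicit (identity at $y$, diagonal at $x$) and notes that the covering hypothesis $x\lessdot y$ is used a second time to see this is a morphism of $P$-representations, whereas you infer the identification $H^0(\widetilde{R}^\bullet)\simeq\KM_x^y$ indirectly; spelling out that augmentation would also pin down the extension class and dispel your worry about stray signs.
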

	    
	    \begin{proof}
	    Let $R^\bullet_x$ and $R^\bullet_y$ denote the resolutions of $\KM_x$ and $\KM_y$ described in the proof of Proposition \ref{prop:Cx as holim}. By mimicking the definition of $b_x^y$ and the proof of Lemma \ref{lem: b morphism cohomology} 1) we get a morphism of complexes $R^{\bullet}_x\rightarrow R^{\bullet+1}_y$. We let $S^\bullet$ denote its cone shifted by $-1$, so that $S^\bullet=R^\bullet_x\oplus R^\bullet_y$ as graded $P$-representations. We consider the following commutative diagram where both rows are short exact sequences:
	    $$\xymatrix{
	    0 \ar[r] & \KM_y \ar[r]\ar[d] & \KM_x^y\ar[r]\ar@{-->}[d] & \KM_x \ar[r]\ar[d] & 0 \\
	    0 \ar[r] & R^{\bullet}_y \ar[r] & S^\bullet \ar[r] & R^{\bullet}_x  \ar[r] & 0
	    }$$
	    The dotted arrow $\KM_x^y\rightarrow S^0=\KM_{\leq x}\oplus \KM_{\leq y}$ is defined so that its value at $y$ is the identity of $\KM$ and its value at $x$ is the diagonal morphism $\KM\rightarrow \KM\oplus\KM$. It is a morphism of representations of $P$ because $x\lessdot y$. The composite $\KM^y_x\rightarrow S^0\rightarrow S^1$ is zero, as one can check on the values at $x$ and $y$. In the above commutative diagram, the bottom row is thus a $\lim_P$-acyclic resolution of the top row, by the $5$-lemma. This implies that the connecting morphism $\holim_P\KM_x\rightarrow \holim_P\KM_y[1]$ is computed by the connecting morphism in the long exact sequence associated to the short exact sequence
	    $$0\longrightarrow \lim_PR^\bullet_y\longrightarrow \lim_PS^\bullet \longrightarrow \lim_P R^\bullet_x\rightarrow 0\ .$$
	    By construction, this is nothing but the short exact sequence for the cone of the morphism $b_x^y[-1]:C^{\bullet-1}(x)\rightarrow C^\bullet(y)$, and the connecting morphism is $b_x^y$. The claim follows.
	    \end{proof}
	    
	    \begin{rem}\label{rem: functoriality holim}
	    Let $\alpha:P\to P'$ be an isomorphism of posets, let $x\in P$ and $x'=\alpha(x)\in P'$. One easily proves that the natural isomorphism 
	    $$C^{\bullet+1}_{P'}(x') \simeq \holim_{P'}\KM_{x'} \simeq \holim_P\KM_x \simeq C^{\bullet+1}_P(x)$$
	    is the isomorphism of complexes denoted $C^{\bullet+1}(\alpha)$ in Remark \ref{rem: functoriality complexes C}.
	    \end{rem}

\section{Triangulated derivators}\label{sec: 2}

    In this section we collect some tools about triangulated derivators and natural Postnikov systems arising in this context. The main result is Proposition \ref{prop: pushed}.

    \subsection{The framework of triangulated derivators}
    
        We work within the framework of triangulated derivators, introduced by Grothendieck \cite{grothendieckderivators} and developed by several authors, see \cite{hellerhomotopy, frankeuniqueness, maltsiniotisintroductionderivateurs, ayoubPhD1, cisinskineeman, grothderivatorspointed}. Broadly speaking, triangulated derivators are like triangulated categories with well-defined homotopy limits and colimits (and more generally homotopy Kan extensions).
        
         We work with Ayoub's notion of a triangulated derivator from \cite{ayoubPhD1} in order to be able to use the notion of an algebraic derivator from [\emph{loc. cit.}] in the next section. There a $2$-category of ``diagrams'' is fixed, which is a full sub-$2$-category of the $2$-category of (small) categories satisfying the axioms D0, D1 and D2 in [\emph{loc. cit.}, \S 2.1.2]; we choose it to be the $2$-category of finite posets, since those are the only diagrams that we will need. All $2$-categories in this paper are strict, and our notion of a $2$-functor between two $2$-categories is the weak one, i.e., that of a pseudofunctor in the sense of \cite[7.5]{borceux}. 
        
        \subsubsection{Finite posets}
        
            A finite poset $P$ is viewed as a category with a unique morphism from $x$ to $y$ if $x\leq y$, and none otherwise. Finite posets thus form a full sub-$2$-category of the $2$-category of (small) categories. A functor between finite posets is an order-preserving map and is simply called a morphism of posets. For two such morphisms $f,g:P\to Q$, there is a unique natural transformation from $f$ to $g$ if $f(x)\leq g(x)$ for every $x\in P$, and none otherwise.
            
            We denote by $e$ the poset with one element. For $P$ a finite poset we denote by $p$ or $p_P:P\to e$ the morphism to a point. An inclusion between posets $Q\subset P$ is denoted $i_Q^P:Q\to P$. For $P$ a finite poset and $x\in P$, we use the notation $i_x$ or $i_x^P:e\to P$ for the inclusion of $x$.
        
        \subsubsection{Triangulated pre-derivators}
        
            \begin{defi}
            A \emph{triangulated pre-derivator} $\DM$ is a $1$-contravariant and $2$-covariant $2$-functor from the $2$-category of finite posets to the $2$-category of triangulated categories. In other words, it associates:
            \begin{enumerate}
            \item[0)] to every finite poset $P$, a triangulated category $\DM(P)$;
            \item[1)] to every morphism $f:P\to Q$ of finite posets, a triangulated functor $f^*:\DM(Q)\to \DM(P)$;
            \item[2)] to every pair of morphisms $f,g:P\to Q$ such that $f(x)\leq g(x)$ for every $x\in P$, a natural transformation of triangulated functors $f^*\to g^*$;
            \end{enumerate}
            in a way that is compatible with horizontal and vertical composition.
            \end{defi}
            
            \begin{rem}\label{rem: underlying diagram}
            The triangulated category $\DM(e)$ is called the \emph{ground category}. For a finite poset $P$, an element $x\in P$ and an object $\F\in \DM(P)$, the pullback $(i_x)^*\F\in \DM(e)$ is called the \emph{value} of $\F$ at $x$. For elements $x,y\in P$ such that $x\leq y$ we have two morphisms $i_x,i_y:e\to P$ such that $i_x(\cdot)\leq i_y(\cdot)$ and thus a natural transformation $(i_x)^*\to (i_y)^*$. Thus, the functors $(i_x)^*$ induce an \emph{underlying diagram functor}
            \begin{equation}\label{eq: underlying diagram} \DM(P)\longrightarrow \DM(e)^P\end{equation}
            which is not an equivalence in general. The category $\DM(P)$ should be thought of as the category of ``homotopy coherent'' $P$-shaped diagrams of objects of the ground category $\DM(e)$, whereas the category $\DM(e)^P$ consists of ``homotopy incoherent'' diagrams. More generally we have ``partial'' underlying diagram functors, for finite posets $P$ and $E$,
            $$\DM(E\times P) \longrightarrow \DM(E)^P$$
            and diagrams in $\DM(E)^P$ can be called ``partially homotopy incoherent''.
            \end{rem}
            
            \begin{rem}\label{rem: convention variance derivator}
            Our variance convention slightly differs from that of \cite{ayoubPhD1} since there pre-derivators are $1$-contravariant and $2$-contravariant, which makes the underlying diagram functor land in $\DM(e)^{P^{\mathrm{op}}}$. 
            \end{rem}
            
        \subsubsection{Triangulated derivators}\label{sec: definition derivator}
            
            A \emph{triangulated derivator} \cite[D\'{e}finition 2.1.34]{ayoubPhD1} is a triangulated pre-derivator that satisfies a certain number of axioms, including the following that we mention for future reference.
            
            \begin{enumerate}[1)]
            \item We have $\DM(\varnothing)=0$.
            \item The underlying diagram functor \eqref{eq: underlying diagram} is conservative for every finite poset $P$; it is a triangulated equivalence if $P$ is discrete.
            \item For every morphism $f:P\to Q$ of finite posets, the functor $f^*:\DM(Q)\to \DM(P)$ admits right and left adjoints
            $$f_*:\DM(P)\rightarrow \DM(Q) \hspace{1cm}\mbox{ resp. }\hspace{1cm} f_!:\DM(P)\rightarrow \DM(Q)\ ,$$
            which are automatically triangulated functors. They play the role of homotopy right and left Kan extension functors; in the special case of $p:P\rightarrow e$ the projection to a point, they are a homotopy limit and colimit functors and we write $p_*=\holim$ and $p_!=\hocolim$.
            \item For a morphism $f:P\to Q$ and some element $y\in Q$, let $y/P\subset P$ denote the subposet consisting of elements $x\in P$ such that $y\leq f(x)$. We have a natural transformation 
            $$ (p_{y/P})^*(i_y^Q)^* \to (i_{y/P}^P)^*f^*$$
            associated by $2$-functoriality to the two morphisms $(i_y^Q)\circ p_{y/P}$ and $f\circ (i_{y/P}^P)$ from $y/P$ to $Q$. By using the units and counits of the adjunctions we can obtain from it a natural transformation 
            $$(i_y^Q)^*f_*\longrightarrow (p_{y/P})_*(i_{y/P}^P)^*$$
            which is $(i_y^Q)^*f_*\to (p_{y/P})_*(p_{y/P})^*(i_y^Q)^*f_* \to (p_{y/P})_*(i_{y/P}^P)^*f^*f_* \to (p_{y/P})_*(i_{y/P}^P)^*$.
            We require this last natural transformation to be invertible. In the same vein, let $P/y\subset P$ denote the subposet consisting of elements $x\in P$ such that $f(x)\leq y$. Then we have a natural transformation
            $$(p_{P/y})_!(i_{P/y}^P)^*\longrightarrow (i_y^Q)^*f_!$$
            that we require to be invertible.
            \end{enumerate}
            
            \begin{rem}
            The axioms listed above are similar to the axioms 1.-4. of \cite[D\'{e}finition 2.1.34]{ayoubPhD1}, albeit slightly less complete for 2) and 4). In [\emph{loc. cit.}] two more axioms 5. and 6. relate the triangulated structures on the categories $\DM(P)$ with the homotopy Kan extension functors $f_*$ and $f_!$ and will not be used in the rest of this article.
            \end{rem}
        
            \begin{rem}
            If $\AC$ is a Grothendieck abelian category, e.g., $\AC=\KMMod{}$, then we have a derivator $\DM_\AC$ such that $\DM_\AC(P)$ is the derived category of the diagram category $\AC^P$ for every finite poset $P$. The pullback functors $f^*$ are the obvious ones and their adjoints are obtained by deriving the usual Kan extension functors.
            \end{rem}

        \subsubsection{Monoidal structure} \label{subsubsec: monoidal}
        
            The triangulated derivators that we will deal with all have a unital symmetric monoidal structure in the sense of \cite[\S 2.1.6]{ayoubPhD1}. This means that for every finite poset $P$ the triangulated category $\DM(P)$ is endowed with the structure of a unital symmetric monoidal triangulated category and that for every morphism $f:P\to Q$ the functor $f^*:\DM(Q)\to \DM(P)$ is endowed with the structure of a unital symmetric monoidal functor. The triangulated derivator $\DM_{\KMMod}$ of the abelian category $\KMMod$ is symmetric monoidal.
            
            Let $\DM$ be a unital symmetric monoidal derivator. Then we have, for every morphism of finite posets $f:P\to Q$ and for $\F\in \DM(P)$, $\G\in \DM(Q)$, a natural morphism
            \begin{equation}\label{eq: projection formula star}
            \G \otimes f_*\F \longrightarrow f_*(f^*\G \otimes \F)\ .
            \end{equation}
            It is obtained as the composition
            $\G \otimes f_*\F
            \to f_*f^*(\G \otimes f_*\F)
            \stackrel\sim\to f_*(f^*\G \otimes f^*f_*\F)
            \to f_*(f^*\G \otimes \F)$,
            where the first and last steps involve the unit and the counit of the adjunction, and the middle step uses that $f^*$ is monoidal.
            In the same way, we have a natural morphism
            \begin{equation}\label{eq: projection formula shriek}
            f_!(f^*\G \otimes \F)\longrightarrow \G \otimes f_!\F\ .
            \end{equation}
            Neither \eqref{eq: projection formula star} nor \eqref{eq: projection formula shriek} is an isomorphism in general.
        
        \subsubsection{Coefficients}\label{subsubsec: coefficients}
        
            In the remainder of this section we fix a unital symmetric monoidal triangulated derivator $\DM$ equipped with a morphism of unital symmetric monoidal triangulated derivators $\DM_{\KMMod}\to \DM$.  Such an object can be called a \emph{unital symmetric monoidal triangulated derivator with coefficients in $\KM$}.
            
            We will allow ourselves to interpret complexes of $\KM$-modules as objects of $\DM(e)$ without specific reference to the morphism $\DM_{\KMMod}\to \DM$.

    \subsection{Extension by zero}
    
        We start with a classical lemma.
    
        \begin{lem}\label{lem: greatest least element}
        Let $P$ be a finite poset with projection $p:P\rightarrow e$.
        \begin{enumerate}[1)]
        \item If $P$ has a least element $x$ then we have isomorphisms $p_*\simeq (i_x)^*$ and $p^*\simeq (i_x)_!$. The natural morphism $p_!p^*\rightarrow \mathrm{id}_{\DM(e)}$ is an isomorphism.
        \item If $P$ has a greatest element $y$ then we have isomorphisms $p_!\simeq (i_y)^*$ and $p^*\simeq (i_y)_*$. The natural morphism $\mathrm{id}_{\DM(e)}\rightarrow p_*p^*$ is an isomorphism.
        \end{enumerate}
        \end{lem}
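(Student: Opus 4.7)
The strategy is to identify adjunctions in the $2$-category of finite posets and transport them through the $2$-functor $\DM$, so that the identifications of functors reduce to uniqueness of adjoints.

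For part 1), when $x$ is the least element of $P$, the inclusion $i_x : e \to P$ is a left adjoint to $p : P \to e$ in the $2$-category of finite posets: the unit $\mathrm{id}_e \to p \circ i_x$ is an equality, while the counit $i_x \circ p \to \mathrm{id}_P$ is the unique $2$-morphism provided by the inequalities $x \leq y$ for all $y \in P$, which hold precisely because $x$ is minimal. Since $\DM$ is $1$-contravariant and $2$-covariant, it sends the adjunction $i_x \dashv p$ to an adjunction $p^* \dashv (i_x)^*$ in the $2$-category of triangulated categories. Combined with the built-in adjunctions $p^* \dashv p_*$ and $(i_x)_! \dashv (i_x)^*$ of axiom 3) of \S\ref{sec: definition derivator}, uniqueness of adjoints yields the natural isomorphisms $p^* \simeq (i_x)_!$ and $p_* \simeq (i_x)^*$.

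For the counit $p_! p^* \to \mathrm{id}_{\DM(e)}$, I invoke the standard fact that in an adjunction $L \dashv R$ the counit $LR \to \mathrm{id}$ is invertible if and only if $R$ is fully faithful; applied to $p_! \dashv p^*$, this reduces the claim to showing that $p^* \simeq (i_x)_!$ is fully faithful, i.e., that the unit $\mathrm{id}_{\DM(e)} \to (i_x)^*(i_x)_!$ is invertible. But this is a direct consequence of axiom 4) applied to $i_x : e \to P$ at the element $x$: the slice $e/x$ equals $e$ (since $x \leq x$), whence $(i_x)^*(i_x)_! \simeq (p_{e/x})_!(i_{e/x}^e)^* = \mathrm{id}_{\DM(e)}$.

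Part 2) is formally dual. When $y$ is the greatest element of $P$, $i_y : e \to P$ is instead a right adjoint to $p$ in the $2$-category of finite posets (counit the equality $p \circ i_y = \mathrm{id}_e$, unit the $2$-morphism coming from $z \leq y$ for all $z \in P$), which under $\DM$ translates to an adjunction $(i_y)^* \dashv p^*$. Combining with $p_! \dashv p^*$ and $(i_y)^* \dashv (i_y)_*$ yields $p_! \simeq (i_y)^*$ and $p^* \simeq (i_y)_*$ by uniqueness of adjoints. The unit $\mathrm{id}_{\DM(e)} \to p_* p^*$ is invertible because $p^* \simeq (i_y)_*$ is fully faithful (equivalently, the counit of $(i_y)^* \dashv (i_y)_*$ is invertible), which follows dually from axiom 4) for $(-)_*$ applied to $i_y : e \to P$, yielding $(i_y)^*(i_y)_* \simeq \mathrm{id}$. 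The only subtle point throughout is tracking the variance conventions for $\DM$ (Remark \ref{rem: convention variance derivator}), which turn an adjunction $F \dashv G$ of $1$-morphisms into the adjunction $\DM(G) \dashv \DM(F)$ of functors; everything else is routine.
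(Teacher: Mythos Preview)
Your proof is correct and the core argument---transporting the adjunction $i_x \dashv p$ through the $2$-functor $\DM$ and invoking uniqueness of adjoints---is exactly what the paper does.

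The only place you diverge is in the second assertion of each part. For $p_!p^* \to \id_{\DM(e)}$, you pass through the full-faithfulness criterion and then invoke axiom~4) to see that $(i_x)^*(i_x)_! \simeq \id$. The paper instead argues more directly: since $p\circ i_x = \id_e$, applying $(-)_!$ gives $p_!(i_x)_! \simeq \id_{\DM(e)}$, and substituting the already-established isomorphism $p^* \simeq (i_x)_!$ identifies this with the counit $p_!p^* \to \id_{\DM(e)}$. Your route is fine, but the paper's avoids appealing to axiom~4) and the extra verification that the base-change isomorphism there coincides with the adjunction unit; the composition $p\,i_x = \id_e$ does the job in one line.
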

        
        \begin{proof}
        We prove the first point (the second is proved dually). The fact that $x$ is the least element of $P$ may be expressed by the fact that $(i_x,p)$ is an adjoint pair of functors. It follows that $(p^*,(i_x)^*)$ is also an adjoint pair of functors. Now $(i_x)^*$ being a right adjoint to $p^*$ means that it is equal to $p_*$, and $p^*$ being a left adjoint to $(i_x)^*$ means that it is equal to $(i_x)_!$.
        
        For the second assertion, note that $p i_x = \id_e$, hence $p_! (i_x)_! \simeq \id_{\DM(e)}$, and the isomorphism $p^* = (i_x)_!$ identifies this with the adjunction morphism $p_! p^* \to \id_{\DM(e)}$.
        \end{proof}
        
        \begin{lem}\label{lem: extension subposet}
        Let $i:Q\hookrightarrow P$ denote the inclusion of a subposet. For every $\mathcal{G}\in \DM(Q)$ the natural morphisms
        $$i^*i_*\G\longrightarrow \G \qquad \mbox{and} \qquad \G\longrightarrow i^*i_!\G$$
        are isomorphisms.
        \end{lem}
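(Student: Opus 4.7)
The plan is to reduce the statement to a pointwise verification using conservativity of the underlying diagram functor, and then invoke the base change axiom together with Lemma \ref{lem: greatest least element}. The two assertions are dual, so I will spell out the one for $i_*$ and indicate the dual.

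First I would use axiom 2) from \S \ref{sec: definition derivator}: the underlying diagram functor $\DM(Q)\to\DM(e)^Q$ is conservative. Thus, to prove that $i^*i_*\G\to \G$ is an isomorphism in $\DM(Q)$, it suffices to show that $(i_y^Q)^*i^*i_*\G\to (i_y^Q)^*\G$ is an isomorphism in $\DM(e)$ for every $y\in Q$. Since $i$ is the inclusion of a subposet, the composite $i\circ i_y^Q:e\to P$ equals $i_y^P$, so the left-hand side is $(i_y^P)^*i_*\G$.

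Next I would apply axiom 4) to the morphism $i:Q\to P$ and the element $y\in P$ (using that $y\in Q$): we obtain a natural isomorphism
$$(i_y^P)^*i_*\G \;\simeq\; (p_{y/Q})_*\,(i_{y/Q}^Q)^*\G\ ,$$
where $y/Q=\{x\in Q\;:\; y\leq x\}$. The key combinatorial observation is that, since $y\in Q$, the subposet $y/Q$ has $y$ itself as least element. Lemma \ref{lem: greatest least element}~1) then yields $(p_{y/Q})_*\simeq (i_y^{y/Q})^*$, so the right-hand side equals $(i_y^{y/Q})^*(i_{y/Q}^Q)^*\G=(i_y^Q)^*\G$, as the composite $i_{y/Q}^Q\circ i_y^{y/Q}$ is precisely $i_y^Q$. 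Composing these identifications produces the desired isomorphism at the level of components, and one checks, by unwinding the construction of the base change morphism in axiom 4), that it is induced by the counit $i^*i_*\G\to \G$.

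The second assertion is proved by the dual argument: using the $_!$-version of axiom 4), $(i_y^P)^*i_!\G\simeq (p_{Q/y})_!(i_{Q/y}^Q)^*\G$, and noting that $Q/y=\{x\in Q\;:\; x\leq y\}$ has $y$ as greatest element (because $y\in Q$), so Lemma \ref{lem: greatest least element}~2) gives $(p_{Q/y})_!\simeq (i_y^{Q/y})^*$ and the same collapse yields $(i_y^Q)^*\G$.

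The main obstacle is the last compatibility: the abstractly produced isomorphism between $(i_y^Q)^*i^*i_*\G$ and $(i_y^Q)^*\G$ coincides with the pullback of the adjunction counit. This is essentially bookkeeping, and reduces to the fact that the base change morphism in axiom 4) is built out of the units and counits of the $(f^*,f_*)$ adjunction together with the canonical $2$-cell relating $i\circ i_{y/Q}^Q$ and $i_y^P\circ p_{y/Q}$; the cases of a poset with a least (resp.\ greatest) element in Lemma \ref{lem: greatest least element} were proved in precisely such a way that this identification is tautological.
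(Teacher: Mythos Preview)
Your proof is correct and follows essentially the same route as the paper's: reduce to a pointwise check via conservativity (axiom~2), identify $(i_y^Q)^*i^*i_*\G$ with $(i_y^P)^*i_*\G$, apply the base change axiom~4) to pass to $(p_{y/Q})_*(i_{y/Q}^Q)^*\G$, and collapse the latter using that $y$ is the least element of $y/Q$ via Lemma~\ref{lem: greatest least element}. The paper likewise records the compatibility of the resulting isomorphism with the counit with a simple ``one checks''; your discussion of this point is slightly more detailed but amounts to the same verification.
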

        
        \begin{proof}
        We prove that the first morphism is an isomorphism (the second case is proved dually). For every $x\in Q$ we have a sequence of isomorphisms
        $$(i_x^Q)^*i^*i_*\G\simeq (i_x^P)^*i_*\G\simeq (p_{x/Q})_*(i_{x/Q}^Q)^*\G\simeq (i_x^{x/Q})^*(i_{x/Q}^Q)^*\G\simeq (i_x^Q)^*\G$$
        where the second isomorphism follows from \S\ref{sec: definition derivator} 4) and the third isomorphism follows from Lemma \ref{lem: greatest least element} since $x$ is the least element of $x/Q$.
        One checks that the composition of these isomorphisms is the composition of $(i_x^Q)^*$ with the natural morphism $i^*i_*\G\to \G$. By \S\ref{sec: definition derivator} 2) this proves the claim.
        \end{proof}
    
        \begin{defi}
        Let $P$ be a finite poset.
        \begin{enumerate}[1)]
        \item A \emph{sieve} in $P$ is a subset $U\subset P$ such that for every $x\leq y$ in $P$ we have $y\in U\Rightarrow x\in U$. 
        \item A \emph{cosieve} in $P$ is a subset $V\subset P$ such that for every $x\leq y$ in $P$ we have $x\in V\Rightarrow y\in V$. 
        \end{enumerate}
        \end{defi}
        
        The complement of a sieve is a cosieve and the complement of a cosieve is a sieve. We also call a sieve (resp. cosieve) the functor of posets given by the inclusion of a sieve (resp. cosieve). The following lemma is classical and says that the functor $u_*$ (resp. $v_!$) deserves the name ``extension by zero'' if $u$ is a sieve (resp. if $v$ is a cosieve).
        
        \begin{lem}\label{lem: extension by zero}
        Let $P$ be a finite poset.
        \begin{enumerate}[1)]
        \item Let $u: U \hookrightarrow P$ be a sieve. For $\mathcal{F}\in\DM(P)$, the natural morphism $\mathcal{F}\rightarrow u_*u^*\mathcal{F}$ is an isomorphism if and only if $(i_x)^*\mathcal{F}=0$ for all $x \in P\setminus U$.
        \item Let $v: V \hookrightarrow P$ be a cosieve. For $\mathcal{F}\in\DM(P)$, the natural morphism $v_!v^*\mathcal{F}\rightarrow \mathcal{F}$ is an isomorphism if and only if $(i_x)^*\mathcal{F}=0$ for all $x \in P\setminus V$.
        \end{enumerate}
        \end{lem}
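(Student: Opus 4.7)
The plan is to use conservativity of the underlying diagram functor (axiom 2 in \S\ref{sec: definition derivator}): the morphism $\F\to u_*u^*\F$ is an isomorphism in $\DM(P)$ if and only if $(i_x^P)^*(\F\to u_*u^*\F)$ is an isomorphism in $\DM(e)$ for every $x\in P$. So the proof reduces to computing the components of $u_*u^*\F$ at every point of $P$, which is exactly what the Kan extension axiom 4) of \S\ref{sec: definition derivator} is designed to do.

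For part 1), the key geometric observation is that when $U\subset P$ is a sieve and $x\in P$, the subposet $x/U=\{y\in U:x\leq y\}$ behaves in two sharply different ways. If $x\in U$, then $x/U$ is a subposet of $U$ with $x$ as its least element. If $x\notin U$, then any $y\in U$ with $x\leq y$ would force $x\in U$ by the sieve property, so $x/U=\varnothing$. Applying axiom 4) we obtain a canonical isomorphism
$$(i_x^P)^* u_*u^*\F \;\simeq\; (p_{x/U})_*(i_{x/U}^U)^* u^*\F.$$
In the case $x\in U$, Lemma \ref{lem: greatest least element} 1) identifies $(p_{x/U})_*$ with $(i_x^{x/U})^*$, and unwinding (together with Lemma \ref{lem: extension subposet} to check that the natural morphism matches the identity) shows that the component of $\F\to u_*u^*\F$ at $x$ is an isomorphism. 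In the case $x\notin U$, the subposet $x/U$ is empty, so $(i_{x/U}^U)^*u^*\F\in\DM(\varnothing)=0$ by axiom 1), hence $(i_x^P)^*u_*u^*\F=0$.

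Combining these: if $(i_x)^*\F=0$ for all $x\in P\setminus U$, then the morphism $\F\to u_*u^*\F$ is a componentwise isomorphism (both sides vanish off $U$, and the map is an iso on $U$ by the computation above), hence an isomorphism by conservativity. Conversely, if the morphism is an isomorphism, then for $x\in P\setminus U$ we get $(i_x)^*\F\simeq (i_x)^*u_*u^*\F=0$. Part 2) is strictly dual: for a cosieve $V$ and $x\in P$, the subposet $P/x=\{y\in V:y\leq x\}$ has $x$ as greatest element if $x\in V$ and is empty otherwise (by the up-closed property of $V$), and the adjoint version of axiom 4) together with Lemma \ref{lem: greatest least element} 2) yields the analogous computation of $(i_x)^*v_!v^*\F$.

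There is no real obstacle here; the statement is essentially a formal consequence of the derivator axioms once one notes the crucial vanishing $x/U=\varnothing$ (resp.\ $P/x=\varnothing$) characterizing the complement of a sieve (resp.\ cosieve). The only point that requires some attention is checking that the composite of canonical isomorphisms used in the $x\in U$ case really agrees with $(i_x)^*$ applied to the unit $\F\to u_*u^*\F$, which is a routine verification via the triangle identities for the adjunction $(u^*,u_*)$.
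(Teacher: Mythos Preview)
Your proof is correct and follows essentially the same approach as the paper: both use axiom 4) to compute the components of $u_*u^*\F$, invoke the key observation that $x/U=\varnothing$ for $x\notin U$ (resp.\ has least element $x$ for $x\in U$, handled via Lemma~\ref{lem: greatest least element} as in Lemma~\ref{lem: extension subposet}), and conclude by conservativity (axiom 2). The only difference is organizational: you compute all components first and then deduce both implications, whereas the paper treats the two implications separately.
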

        
        \begin{proof}
        We prove the first point (the second is proved dually). Let us assume that the natural morphism $\F\to u_*u^*\F$ is an isomorphism. Then for $x\in P\setminus U$ we have an isomorphism
        $$(i_x)^*\F\simeq (i_x)^*u_*u^*\F\simeq (p_{x/U})_*(i_{x/U}^U)^*u^*\F$$
        where the second isomorphism follows from \S\ref{sec: definition derivator} 4). By assumption we have $x/U=\varnothing$ and \S\ref{sec: definition derivator} 1) implies that we have $(i_x)^*\F=0$. Conversely, if $(i_x)^*\F=0$ for all $x\in P\setminus U$ then the same argument shows that the natural morphism 
        $(i_x)^*\F\to (i_x)^*u_*u^*\F$ is an isomorphism. The fact that it is an isomorphism also for $x\in U$ follows from the same kind of reasoning as in the proof of Lemma \ref{lem: extension subposet}. Thanks to \S\ref{sec: definition derivator} 2) we conclude that the morphism $\F\to u_*u^*\F$ is an isomorphism.
        \end{proof}

        
        
        
        The next lemma explains the compatibility between extension by zero and pullback.
        
        \begin{lem}\label{lem: base change extension by zero}
        \begin{enumerate}[1)]
        \item Let us consider the following cartesian diagram in the category of finite posets, where $u$ is a sieve.
        $$\xymatrix{
        f^{-1}(U) \ar[r]^-{u'}\ar[d]_{g}& Q\ar[d]^{f} \\
        U \ar[r]_-{u}& P
        }$$
        Then we have a canonical isomorphism $f^*u_*\stackrel{\sim}{\longrightarrow} (u')_*g^*$.
        \item Let us consider the following cartesian diagram in the category of posets, where $v$ is a cosieve.
        $$\xymatrix{
        f^{-1}(V) \ar[r]^-{v'}\ar[d]_{h}& Q\ar[d]^{f} \\
        V \ar[r]_-{v}& P
        }$$
        Then we have a canonical isomorphism $(v')_!h^*\stackrel{\sim}{\longrightarrow} f^*v_!$.
        \end{enumerate}
        \end{lem}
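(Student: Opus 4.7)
The plan is to construct the morphism as a standard base change transformation and then check it is a pointwise isomorphism, following the same pattern as the proof of Lemma \ref{lem: extension subposet}. For part 1), the cartesian square gives $u\circ i' = i\circ u'$ and therefore an equality of functors $(i')^*u^* = (u')^* i^*$ by $2$-functoriality. The canonical morphism is then the composition
$$i^*u_* \longrightarrow (u')_*(u')^*i^*u_* = (u')_*(i')^*u^*u_* \longrightarrow (u')_*(i')^*,$$
where the first arrow comes from the unit of $((u')^*,(u')_*)$ and the second from the counit of $(u^*,u_*)$. Part 2) is constructed dually, using the adjunctions $((v')_!,(v')^*)$ and $(v_!,v^*)$.

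By \S\ref{sec: definition derivator} 2), it suffices to check that this morphism becomes an isomorphism after applying $(i_x^Z)^*$ for every $x\in Z$. Applying \S\ref{sec: definition derivator} 4) to both sides transforms the source into $(p_{x/U})_*(i_{x/U}^U)^*\mathcal{G}$ and the target into $(p_{x/(U\cap Z)})_*(i_{x/(U\cap Z)}^{U\cap Z})^*(i')^*\mathcal{G}$, the latter being equal to $(p_{x/(U\cap Z)})_*(i_{x/(U\cap Z)}^{U})^*\mathcal{G}$ since $U\cap Z\subset U$. A case analysis according to whether $x$ belongs to $U$ then finishes the argument: if $x\notin U$, the sieve property forces $x/U=\varnothing$ (any $y\in U$ with $x\leq y$ would imply $x\in U$), hence also $x/(U\cap Z)=\varnothing$, and both sides vanish by \S\ref{sec: definition derivator} 1); if $x\in U$, and thus $x\in U\cap Z$, then $x$ is the least element of both posets $x/U$ and $x/(U\cap Z)$, so Lemma \ref{lem: greatest least element} identifies both sides with $(i_x^U)^*\mathcal{G}$.

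The only non-routine point, handled exactly as in the proof of Lemma \ref{lem: extension subposet}, is to verify that the chain of identifications obtained by repeatedly invoking \S\ref{sec: definition derivator} 4) and Lemma \ref{lem: greatest least element} actually equals the restriction $(i_x^Z)^*$ of the base change morphism constructed above, rather than some \emph{a priori} different isomorphism; this is where care must be taken, since functoriality of the various units and counits has to be tracked through the identification $(i')^*u^* = (u')^*i^*$. Once this compatibility is in place, \S\ref{sec: definition derivator} 2) upgrades the pointwise isomorphism to an isomorphism in $\DM(Z)$, and part 2) follows by the same argument after reversing all arrows and exchanging $(-)_*$ with $(-)_!$ and cosieves with sieves.
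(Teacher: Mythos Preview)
Your construction of the base change morphism matches the paper's exactly. The difference is in how you verify it is an isomorphism. The paper does not go back to axiom~4) and a case analysis on $x\in U$ versus $x\notin U$; instead it observes that the second and third arrows in the composite
\[
i^*u_* \longrightarrow (u')_*(u')^*i^*u_* \stackrel{\sim}{\longrightarrow} (u')_*(i')^*u^*u_* \stackrel{\sim}{\longrightarrow} (u')_*(i')^*
\]
are already isomorphisms (the second by $(u')^*i^*=(i')^*u^*$, the third by Lemma~\ref{lem: extension subposet}), and then invokes Lemma~\ref{lem: extension by zero} for both $u$ and $u'$ to see that the first arrow is an isomorphism as well: since $u$ is a sieve, $u_*\mathcal{G}$ has vanishing components outside $U$, hence $i^*u_*\mathcal{G}$ has vanishing components outside $U\cap Z$, and the characterization in Lemma~\ref{lem: extension by zero} applied to the sieve $u'$ gives the claim.

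Your route is correct but longer: you are essentially redoing the pointwise computation already encapsulated in Lemma~\ref{lem: extension by zero}. The advantage of the paper's approach is that it avoids the compatibility verification you flag in your last paragraph---no tracking of units and counits through axiom~4) is needed, since the argument works entirely at the level of the unit for $u'$ and the counit for $u$. Your approach has the merit of being self-contained from the axioms, but the cost is precisely that compatibility check, which you acknowledge but do not carry out.
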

        
        \begin{proof}
        We prove the first point (the second is proved dually). The morphism $f^*u_*\to (u')_*g^*$ is the composite $f^*u_*\to (u')_*(u')^*f^*u_*\stackrel{\sim}{\to} (u')_*g^*u^*u_* \stackrel{\sim}{\to}(u')_*g^*$. The fact that it is an isomorphism follows from Lemma \ref{lem: extension by zero} and the fact that $u$ and $u'$ are sieves.
        \end{proof}
        
        The next lemma provides a projection formula for the ``extension by zero'' functors.
        
        \begin{lem}\label{lem: projection formula sieve cosieve}
        Let $P$ be a finite poset.
        \begin{enumerate}[1)]
        \item Let $u:U\hookrightarrow P$ be a sieve. For $\mathcal{F}\in \DM(P)$ and $\mathcal{G}\in\DM(U)$ the natural morphism
        $$\mathcal{F}\otimes u_*\mathcal{G}
        \longrightarrow
        u_*(u^*\mathcal{F}\otimes\mathcal{G})$$
        defined in \S \ref{subsubsec: monoidal} \eqref{eq: projection formula star} is an isomorphism.
        \item Let $v:V\hookrightarrow P$ be a cosieve. For $\mathcal{F}\in \DM(P)$ and $\mathcal{G}\in\DM(V)$ the a natural morphism
        $$v_!(v^*\mathcal{F}\otimes\mathcal{G})
        \longrightarrow
        \mathcal{F}\otimes v_!\mathcal{G}$$
        defined in \S \ref{subsubsec: monoidal} \eqref{eq: projection formula shriek} is an isomorphism.
        \end{enumerate}
        \end{lem}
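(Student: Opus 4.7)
I will prove part 1); part 2) follows by a dual argument (exchanging sieves with cosieves, $*$ with $!$, and reversing arrows).

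The strategy is to verify that the morphism $\mathcal{F}\otimes u_*\mathcal{G}\to u_*(u^*\mathcal{F}\otimes\mathcal{G})$ is an isomorphism by checking it pointwise. By conservativity of the underlying diagram functor (\S\ref{sec: definition derivator} 2)), it suffices to show that $(i_x^P)^*$ applied to this morphism is an isomorphism for every $x\in P$. I would split into two cases according to whether $x\in U$ or $x\in P\setminus U$.

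For $x\in P\setminus U$, I claim both sides vanish. The key observation is that because $U$ is a sieve, the subposet $x/U=\{y\in U : x\leq y\}$ is empty: any such $y$ would force $x\in U$, a contradiction. Applying axiom \S\ref{sec: definition derivator} 4) and then axiom \S\ref{sec: definition derivator} 1), I get $(i_x^P)^*u_*\mathcal{H}\simeq (p_{x/U})_*(i_{x/U}^U)^*\mathcal{H}=0$ for any $\mathcal{H}\in\DM(U)$. Applied to $\mathcal{H}=\mathcal{G}$ and $\mathcal{H}=u^*\mathcal{F}\otimes\mathcal{G}$, this shows the right-hand side vanishes; for the left-hand side, monoidality of the pullback $(i_x^P)^*$ combined with the same vanishing of $(i_x^P)^*u_*\mathcal{G}$ gives $(i_x^P)^*(\mathcal{F}\otimes u_*\mathcal{G})\simeq (i_x^P)^*\mathcal{F}\otimes 0 = 0$.

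For $x\in U$, the inclusion factors as $i_x^P = u\circ i_x^U$, so $(i_x^P)^* = (i_x^U)^* u^*$. Using monoidality of $u^*$ and of $(i_x^U)^*$, together with Lemma \ref{lem: extension subposet} which provides the natural isomorphism $u^*u_*\mathcal{G}\simeq \mathcal{G}$, I can compute both sides as $(i_x^P)^*\mathcal{F}\otimes (i_x^U)^*\mathcal{G}$. A routine unwinding of the construction of the projection formula morphism in \S\ref{subsubsec: monoidal} shows that, after applying $(i_x^P)^*$, the resulting map is precisely the identification between these two computations, hence an isomorphism.

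The argument is essentially formal; the only real content is the observation that a sieve $U$ has $x/U=\varnothing$ for $x\notin U$, which makes the extension-by-zero behavior of $u_*$ compatible with the tensor product. The mildly delicate point is to check cleanly in Case $x\in U$ that applying $(i_x^P)^*$ to the zig-zag defining the projection morphism really does yield the identity via the natural isomorphisms above; this is a diagram chase using only that $u^*$ and $(i_x^U)^*$ are unital symmetric monoidal and the triangle identities for the adjunction $(u^*,u_*)$.
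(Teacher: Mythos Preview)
Your argument is correct, but the paper takes a slightly different and cleaner route. Rather than checking pointwise via conservativity, the paper shows directly that each of the three maps in the composite
\[
\mathcal{F}\otimes u_*\mathcal{G}
\xrightarrow{\eta} u_*u^*(\mathcal{F}\otimes u_*\mathcal{G})
\xrightarrow{\sim} u_*(u^*\mathcal{F}\otimes u^*u_*\mathcal{G})
\xrightarrow{u_*(\id\otimes\epsilon)} u_*(u^*\mathcal{F}\otimes\mathcal{G})
\]
is an isomorphism. For the first map, one observes that if $c:P\setminus U\hookrightarrow P$ is the complementary cosieve then $c^*(\mathcal{F}\otimes u_*\mathcal{G})\simeq c^*\mathcal{F}\otimes c^*u_*\mathcal{G}=0$ (since $c^*u_*=0$ by Lemma~\ref{lem: extension by zero}), and then Lemma~\ref{lem: extension by zero} gives that the unit $\eta$ is an isomorphism. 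The middle map is monoidality of $u^*$, and the third map is an isomorphism because the counit $u^*u_*\mathcal{G}\to\mathcal{G}$ is one by Lemma~\ref{lem: extension subposet}.

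Your pointwise approach is perfectly valid and uses the same two lemmas in spirit (your vanishing for $x\notin U$ is exactly the statement $c^*u_*=0$ unpacked pointwise). The cost is the diagram chase at $x\in U$ that you flag as ``mildly delicate'': you must verify that $(i_x^P)^*$ applied to the composite really is the canonical identification, which amounts to the triangle identities. The paper's global argument sidesteps that chase entirely by never needing to identify the morphism pointwise---it just checks invertibility of each factor.
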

        
        \begin{proof}
        We prove the first point (the second is proved dually). Let $c:P\setminus U\hookrightarrow P$ denote the cosieve complementary to $u$. Then we have $c^*(\F\otimes u_*\G)\simeq c^*\F\otimes c^*u_*\G=0$ since $c^*u_*=0$ by Lemma \ref{lem: extension by zero}. Using that same lemma and also Lemma \ref{lem: extension subposet}, we see that each step in the definition of the morphism \S \ref{subsubsec: monoidal} \eqref{eq: projection formula star} is an isomorphism.
        \end{proof}

    \subsection{Localization triangles}
    
        Let $P$ be a finite poset. Let $u:U\hookrightarrow P$ be a sieve and $v:V\hookrightarrow P$ denote the complementary cosieve.
    
        \begin{lem}\label{lem:localizationtrianglederivator}
        For $\mathcal{F}\in \DM(P)$ there is a unique distinguished triangle in $\DM(P)$ 
        \begin{equation}\label{eq: localization triangle derivator}
        v_!v^*\mathcal{F} \longrightarrow \mathcal{F} \longrightarrow u_*u^*\mathcal{F} \stackrel{+1}{\longrightarrow}\ 
        \end{equation}
        such that the first two maps are the counit and unit respectively. It is functorial in $\mathcal{F}$. We call it a \emph{localization triangle}.
        \end{lem}
        
        \begin{proof}
        Let $C$ denote a cone of the counit morphism $v_!v^*\mathcal{F}\rightarrow \mathcal{F}$, so that we have a distinguished triangle in $\DM(P)$:
        \begin{equation}\label{eq:coneofcounit}
        v_!v^*\mathcal{F} \longrightarrow \mathcal{F} \longrightarrow C \stackrel{+1}{\longrightarrow}\ .
        \end{equation}
        By applying the triangulated functor $v^*$ to \eqref{eq:coneofcounit} and using Lemma \ref{lem: extension subposet} we get a distinguished triangle in $\DM(V)$:
        $$v^*\mathcal{F}\stackrel{\mathrm{id}}{\longrightarrow} v^*\mathcal{F}\longrightarrow v^*C \stackrel{+1}{\longrightarrow}$$
        We thus have $v^*C=0$ and Lemma \ref{lem: extension by zero} implies that we have an isomorphism $C\simeq u_*u^*C$. By applying the triangulated functor $u^*$ to \eqref{eq:coneofcounit} and using $u^*v_!=0$, which follows from Lemma \ref{lem: extension by zero}, we get a distinguished triangle in $\DM(U)$:
        $$0\longrightarrow u^*\mathcal{F} \longrightarrow u^*C\stackrel{+1}{\longrightarrow}$$
        and deduce that we have an isomorphism $C\simeq u_*u^*\mathcal{F}$. This implies the existence of a distinguished triangle whose first two edges are the counit $v_!v^*\mathcal{F}\rightarrow \mathcal{F}$ and the unit $\mathcal{F}\rightarrow u_*u^*\mathcal{F}$. By adjunction and $v^*u_*=0$, which follows from Lemma \ref{lem: extension by zero}, we have $\mathrm{Hom}_{\DM(P)}(v_!v^*\mathcal{F},u_*u^*\mathcal{F}[-1])=0$, and \cite[Corollaire 1.1.10]{bbd} implies that the remaining edge of the triangle is unique. This implies that the triangle is functorial in $\mathcal{F}$.
        \end{proof}
        
        \begin{rem}\label{rem: totally coherent}
        The output of the above lemma, as well as the results of the rest of this section, is a diagram in the triangulated category $\DM(P)$, and is thus a partially incoherent diagram from the point of view of derivators (see Remark \ref{rem: underlying diagram}). It is of course possible to lift it to a coherent diagram living in $\DM(P\times [3])$, where $[n]$ denotes the poset $(\{0,1,\ldots,n\},\leq)$ with $n$ consecutive arrows. We choose not to phrase our results (and in particular, Proposition \ref{prop: pushed} below) in this totally coherent way but rather in a way that is more appealing to readers familiar with the setting of triangulated categories.
        
        However, let us sketch a way to do so in the particular example of the above lemma. The first step is to lift the counit morphism $v_!v^*\F\to \F$ to an object of $\DM(P\times [1])$. For this we can consider the cosieve $v':V'\hookrightarrow P\times [1]$ where $V'$ consists of those elements $(x,i)$ such that $x\in V$ if $i=0$. If $f:P\times [1]\to P$ denotes the natural projection, then we can consider the object
        $$(v')_!(v')^*f^*\F \in \DM(P\times[1])$$
        and check that its underlying morphism in $\DM(P)$ is indeed the counit morphism $v_!v^*\F\to \F$. One can then proceed as in \cite[\S 4.2]{grothderivatorspointed} (see also \cite[Remarque 2.1.38]{ayoubPhD1}) to produce a coherent lift of the triangle \eqref{eq:coneofcounit}, and the same arguments as in the proof above identify it to a coherent lift of the triangle \eqref{eq: localization triangle derivator}.
        \end{rem}
        
        
        
        
        The next lemma explains the compatibility between the localization triangles and pullback.
        
        \begin{lem}\label{lem: base change localization triangle}
        Let $f:Q\rightarrow P$ be a morphism of finite posets and introduce a sieve $u':f^{-1}(U)\hookrightarrow Q$ and a cosieve $v':f^{-1}(V)\hookrightarrow Q$. For $\F\in \DM(P)$ we have the following isomorphism of distinguished triangles, where the first triangle is obtained by applying $f^*$ to \eqref{eq: localization triangle derivator} and the second triangle is the localization triangle \eqref{eq: localization triangle derivator} of $f^*\F$ with respect to $u'$ and $v'$.
        $$\xymatrix{
        f^*v_!v^*\F \ar[r] & f^*\F \ar@{=}[d] \ar[r] & f^*u_*u^*\F \ar[r]^-{+1} \ar[d]_{\simeq}& \\
        (v')_!(v')^*f^*\F  \ar[u]_{\simeq} \ar[r] & f^*\F \ar[r]& (u')_*(u')^*f^*\F \ar[r]^-{+1}  &
        }$$
        \end{lem}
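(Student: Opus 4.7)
The plan is to construct the two vertical isomorphisms directly from Lemma \ref{lem: base change extension by zero}, verify commutativity of the two outer squares by unwinding how those base change isomorphisms are built out of units and counits, and then obtain the third square for free via the uniqueness argument used in the proof of Lemma \ref{lem:localizationtrianglederivator}.

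For the vertical isomorphisms, I would first observe that $u\circ i'_u=i\circ u'$ and $v\circ i'_v=i\circ v'$ as morphisms into $P$ (both sides are the inclusion of the relevant intersection), so by $1$-functoriality $(i'_u)^* u^* = (u')^* i^*$ and $(i'_v)^* v^* = (v')^* i^*$. Plugging this into Lemma \ref{lem: base change extension by zero} yields
$$i^* u_* u^*\F \;\simeq\; (u')_*(i'_u)^* u^*\F \;=\; (u')_*(u')^* i^*\F$$
and
$$(v')_!(v')^* i^*\F \;=\; (v')_!(i'_v)^* v^*\F \;\simeq\; i^* v_! v^*\F,$$
which are the two vertical arrows in the diagram.

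The commutativity of the left square (involving the counit $v_! v^* \to \mathrm{id}$) and of the right square (involving the unit $\mathrm{id}\to u_*u^*$) is a routine check once one recalls that the base change isomorphisms of Lemma \ref{lem: base change extension by zero} are themselves built from the relevant units and counits: the same diagrammatic manipulation used to define them, combined with Lemma \ref{lem: extension subposet} and Lemma \ref{lem: extension by zero}, shows that they intertwine the (co)units of $(u_*,u^*)$ and $(v_!,v^*)$ with those of $((u')_*,(u')^*)$ and $((v')_!,(v')^*)$ after pulling back along $i^*$. This step is the main technical obstacle, but it is purely formal and local to the definitions.

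It then remains to check that the connecting morphism square commutes. The top row is a distinguished triangle because $i^*$ is a triangulated functor applied to the localization triangle of $\F$, and its first two morphisms are identified, via the vertical isomorphisms just constructed, with those of the localization triangle \eqref{eq: localization triangle derivator} for $i^*\F$ with respect to $u'$ and $v'$. By the argument recalled at the end of the proof of Lemma \ref{lem:localizationtrianglederivator}, applied to $i^*\F$ on the poset $Z$, the vanishing
$$\mathrm{Hom}_{\DM(Z)}\bigl((v')_!(v')^* i^*\F,\; (u')_*(u')^* i^*\F\,[-1]\bigr)=0$$
forces the extension to the third edge to be unique, and hence the induced morphism between the third edges of the two triangles must coincide with the vertical isomorphism provided by the diagram. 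This yields the desired isomorphism of distinguished triangles.
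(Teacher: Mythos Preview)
Your proposal is correct and follows essentially the same route as the paper: construct the vertical isomorphisms from Lemma~\ref{lem: base change extension by zero}, check the two visible squares, and deduce the connecting square from a Hom-vanishing uniqueness argument. The only cosmetic difference is that the paper invokes \cite[Proposition~1.1.9]{bbd} directly for the last step rather than the closely related Corollaire~1.1.10 used in Lemma~\ref{lem:localizationtrianglederivator}.
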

        
        \begin{proof}
        It is obtained from the following diagram, where the notation is borrowed from Lemma \ref{lem: base change extension by zero}.
        $$\xymatrix{
        f^*v_!v^*\F \ar[r] & f^*\F \ar@{=}[dd] \ar[r] & f^*u_*u^*\F \ar[r]^-{+1} \ar[d]_{\simeq}& \\
        (v')_!h^*v^*\F \ar@{<->}[d]_{\simeq} \ar[u]^{\simeq} && (u')_*g^*u^*\F & \\
        (v')_!(v')^*f^*\F  \ar[r] & f^*\F \ar[r]& (u')_*(u')^*f^*\F \ar[r]^-{+1} \ar@{<->}[u]_{\simeq} &
        }$$
        The isomorphisms between the first and second row follow from Lemma \ref{lem: base change extension by zero}. The two visible squares of the diagram commute, and the remaining square commutes by the uniqueness statement in Lemma \ref{lem:localizationtrianglederivator}.
        \end{proof}
        
        \begin{lem}\label{lem: projection formula localization triangle}
        For $\F,\F'\in \DM(P)$ we have the following isomorphism of distinguished triangles, where the rows are (induced by) localization triangles:
        $$\xymatrix{
        v_!v^*(\F\otimes\F') \ar[r]\ar[d]_{\simeq} & \F\otimes \F' \ar@{=}[d] \ar[r] & u_*u^*(\F\otimes \F') \ar[r]^-{+1}& \\
        \F\otimes v_!v^*\F'  \ar[r] & \F\otimes \F' \ar[r]& \F\otimes u_*u^*\F'\ar[r]^-{+1} \ar[u]_{\simeq} &
        }$$
        \end{lem}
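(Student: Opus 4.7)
The plan is to exhibit the two rows as distinguished triangles linked by a morphism of triangles whose first two vertices are identified via the projection formulas of Lemma~\ref{lem: projection formula sieve cosieve}, and then force the connecting-morphism square to commute by a $\Hom$-vanishing uniqueness argument à la \cite[Corollaire~1.1.10]{bbd}, exactly as in the proof of Lemma~\ref{lem:localizationtrianglederivator}.

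First, both rows are distinguished: the top row is the localization triangle of $\F \otimes \F'$, distinguished by Lemma~\ref{lem:localizationtrianglederivator}, while the bottom row is obtained by applying the triangulated endofunctor $\F \otimes -$ to the localization triangle of $\F'$. Next, since $v^*$ is symmetric monoidal one has $v^*(\F\otimes\F') \simeq v^*\F\otimes v^*\F'$, so Lemma~\ref{lem: projection formula sieve cosieve}~2) applied with $\G=v^*\F'$ yields the left vertical isomorphism $v_!v^*(\F\otimes\F') \stackrel{\sim}{\to} \F\otimes v_!v^*\F'$. Dually, Lemma~\ref{lem: projection formula sieve cosieve}~1) with $\G=u^*\F'$ yields the right vertical isomorphism.

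The commutativity of the first two squares is a formal naturality check. The projection formula morphisms \eqref{eq: projection formula star} and \eqref{eq: projection formula shriek} are by construction composites involving the units and counits of the adjunctions $(v_!,v^*)$ and $(u^*,u_*)$ together with the monoidality isomorphisms of $v^*$ and $u^*$; unwinding these definitions and applying the triangle identities shows that the projection formula isomorphism composed with $\id_\F \otimes \epsilon_{\F'}$ (respectively with $\id_\F \otimes \eta_{\F'}$) recovers the counit $\epsilon_{\F\otimes\F'}$ (respectively the unit $\eta_{\F\otimes\F'}$) that appears in the top row. I expect this diagram chase to be the main obstacle, although it is routine.

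Finally, the two composites $v_!v^*(\F\otimes\F') \to \F \otimes u_*u^*\F'[1]$ obtained by traversing the third square in either direction both extend the now-commuting first two squares to a morphism of distinguished triangles. By the uniqueness in \cite[Corollaire~1.1.10]{bbd}, they coincide as soon as
$$
\Hom_{\DM(P)}\bigl(v_!v^*(\F\otimes\F'),\, u_*u^*(\F\otimes\F')[-1]\bigr) = 0.
$$
The adjunction $(v_!,v^*)$ identifies this group with $\Hom_{\DM(V)}(v^*(\F\otimes\F'),\, v^*u_*u^*(\F\otimes\F')[-1])$, and it vanishes because $v^*u_*=0$ by Lemma~\ref{lem: extension by zero}. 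This yields the commutativity of the third square and completes the proof.
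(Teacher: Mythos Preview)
Your proposal is correct and follows essentially the same route as the paper: construct the vertical isomorphisms from the projection formulas of Lemma~\ref{lem: projection formula sieve cosieve}, check the two ``visible'' squares by unwinding the (co)unit definitions, and deduce commutativity of the third square from the $\Hom$-vanishing (the paper cites \cite[Proposition~1.1.9]{bbd} here rather than Corollaire~1.1.10, but the argument is the same).
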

        
        \begin{proof}
        It is obtained from the diagram:
        $$\xymatrix{
        v_!v^*(\F\otimes\F') \ar[r]\ar@{<->}[d]_{\simeq} & \F\otimes \F' \ar@{=}[dd] \ar[r] & u_*u^*(\F\otimes \F') \ar@{<->}[d]^{\simeq}\ar[r]^-{+1}& \\
        v_!(v^*\F\otimes v^*\F') \ar[d]_{\simeq} && u_*(u^*\F\otimes u^*\F') & \\
        \F\otimes v_!v^*\F'  \ar[r] & \F\otimes \F' \ar[r]& \F\otimes u_*u^*\F'\ar[r]^-{+1} \ar[u]_{\simeq} &
        }$$
        The isomorphisms between the second and third row follow from Lemma \ref{lem: projection formula sieve cosieve}. The two visible squares of the diagram commute, and the remaining square commutes by the uniqueness statement in Lemma \ref{lem:localizationtrianglederivator}.
        \end{proof}
        
        For $x<y$ in $P$ and $\F\in \DM(P)$ let us denote by $(i_{x<y})^*\F:(i_x)^*\F\to (i_y)^*\F$ the corresponding morphism in $\DM(e)$ in the underlying diagram (see Remark \ref{rem: underlying diagram}). Recall from \S\ref{subsec: poset cohomology holim} the morphism $a_x^y:\KM_x\to \KM_y[1]$ in $\DM_{\KMMod}(P)$.
        
        \begin{lem}\label{lem: connecting morphism discrete posets}
        Assume that $U$ and $V$ are discrete posets. Then the connecting morphism in the localization triangle \eqref{eq: localization triangle derivator} reads
        $$u_*u^*\F \simeq \bigoplus_{x\in U} p^*(i_x)^*\F\otimes \KM_x \longrightarrow  \bigoplus_{y\in V} p^*(i_y)^*\F\otimes \KM_y[1] \simeq v_!v^*\F[1]$$
        where the component indexed by $x\in U$ and $y\in V$ is $p^*(i_{x<y})^*\F\otimes a_x^y$ if $x<y$ and zero otherwise.
        \end{lem}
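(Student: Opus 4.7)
The plan unfolds in three stages: establishing the displayed decompositions, reducing the computation of the connecting morphism to the universal case $\F=\KM$, and matching the resulting tensor factor with the stated formula.

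For the decompositions, the hypotheses force $\{x\}\hookrightarrow P$ to be a sieve for every $x\in U$ (any $w\leq x$ in $P$ must lie in the sieve $U$ and hence equals $x$ by discreteness), and dually $\{y\}\hookrightarrow P$ to be a cosieve for every $y\in V$ (the sieve property of $U$ forces $y$ to be maximal in $P$). Lemma~\ref{lem: extension by zero} identifies $(i_x^P)_*\KM$ with $\KM_x$ and $(i_y^P)_!\KM$ with $\KM_y$, and Lemma~\ref{lem: projection formula sieve cosieve} yields $\F\otimes\KM_x\simeq (i_x^P)_*(i_x^P)^*\F$ and, by the same computation applied to $p^*(i_x^P)^*\F$ in place of $\F$, also $(i_x^P)_*(i_x^P)^*\F\simeq p^*(i_x^P)^*\F\otimes\KM_x$; dually for $y\in V$. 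Combining these with the direct-sum decompositions of $u^*\F$ and $v^*\F$ coming from discreteness, and the relations $u\circ i_x^U=i_x^P$ and $v\circ i_y^V=i_y^P$, yields the displayed isomorphisms.

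Next, Lemma~\ref{lem: projection formula localization triangle} applied with $\F'_{\text{there}}=\KM$ identifies the localization triangle of $\F=\F\otimes\KM$ with $\F\otimes(\text{localization triangle of }\KM)$, so the connecting morphism for $\F$ is $\mathrm{id}_\F\otimes\delta$, where $\delta$ is the connecting morphism for $\KM$. In $\DM_{\KMMod}(P)$ this $\delta$ arises from the short exact sequence of $P$-representations
$$0\longrightarrow \bigoplus_{y\in V}\KM_y\longrightarrow \KM_P\longrightarrow \bigoplus_{x\in U}\KM_x\longrightarrow 0,$$
whose $(x,y)$-component, obtained by pullback and pushout, is the sub-quotient extension on $\{x,y\}$: this is $\KM_x^y$ (of class $a_x^y$ by the definition in \S\ref{subsec: poset cohomology holim}) when $x<y$, and the split extension $\KM_x\oplus\KM_y$ (of class zero) when $x$ and $y$ are incomparable; the case $y<x$ is excluded because $U$ is a sieve containing $x$. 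The conclusion transfers to $\DM(P)$ via the morphism of derivators $\DM_{\KMMod}\to\DM$.

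It remains to check that under the identifications of the first stage, $\mathrm{id}_\F\otimes a_x^y$ corresponds to $p^*(i_{x<y})^*\F\otimes a_x^y$. By Lemma~\ref{lem: base change extension by zero} together with the compatibility $i^*p^*=p_Z^*$ for the inclusion $i:Z=\{x,y\}\hookrightarrow P$, both morphisms restrict coherently via $i^*$ to the analogous data on $Z$, so the check reduces to $Z$. On $Z$, $x$ is the least and $y$ the greatest element, so by Lemma~\ref{lem: greatest least element}, $p_{Z*}=(i_x^Z)^*$ and $p_{Z!}=(i_y^Z)^*$; the associated counit $\epsilon:p_Z^*(i_x^Z)^*i^*\F\to i^*\F$ and unit $\eta:i^*\F\to p_Z^*(i_y^Z)^*i^*\F$, once tensored with $\KM_x^Z$ and $\KM_y^Z$, become componentwise isomorphisms and hence realize the identifications of the first stage by conservativity of the underlying diagram functor. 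Then by functoriality of $\otimes$,
$$(\eta\otimes \mathrm{id})\circ(\mathrm{id}_{i^*\F}\otimes a_x^y)\circ(\epsilon\otimes \mathrm{id})=(\eta\circ \epsilon)\otimes a_x^y,$$
and a direct inspection of the underlying diagram shows $\eta\circ\epsilon=p_Z^*(i_{x<y}^Z)^*i^*\F$, completing the identification. The main obstacle is this last stage: while stages one and two are essentially formal, seeing that the ``identity of $\F$'' side of the tensor translates into the transition morphism $p^*(i_{x<y})^*\F$ requires reducing to $Z$ via base change and exploiting the explicit description of $p_*$ and $p_!$ on a two-element chain.
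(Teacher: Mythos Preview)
Your proof is correct and follows the paper's two-step strategy: reduce to the constant object via Lemma~\ref{lem: projection formula localization triangle}, compute the connecting morphism there, and then identify the $\F$-factor with the transition map $p^*(i_{x<y})^*\F$. The execution differs only in detail---you handle the $\KM$-case directly via the short exact sequence in $(\KMMod)^P$ rather than first restricting to $\{x,y\}$ via Lemma~\ref{lem: base change localization triangle}, and for the final identification you pass to $Z=\{x,y\}$ and argue with the unit and counit of the adjunctions from Lemma~\ref{lem: greatest least element}, whereas the paper stays in $\DM(P)$ and draws a $2$-functoriality square involving $(i_x)_!(i_x)^*\F$ and $(i_y)_*(i_y)^*\F$.

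One caution on your last step: concluding $\eta\circ\epsilon=p_Z^*(i_{x<y}^Z)^*i^*\F$ ``by direct inspection of the underlying diagram'' is not literally valid, since the underlying-diagram functor is conservative but not faithful in a general derivator. The fix is immediate: $p_Z^*$ is fully faithful because $Z$ has a greatest element (Lemma~\ref{lem: greatest least element}), so it suffices to check equality after applying $p_{Z*}\simeq(i_x^Z)^*$; then $(i_x^Z)^*\epsilon$ is the identity by a triangle identity, and $(i_x^Z)^*\eta=(i_{x<y}^Z)^*$ by $2$-functoriality.
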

        
        Note that the object $p^*(i_x)^*\F\otimes \KM_x \in \DM(P)$ has value $(i_x)^*\F$ at $x$ and zero at every other point.
        
        \begin{proof}
        We proceed in two steps.
        \begin{enumerate}[1)]
        \item Assume that we work in the derivator $\DM_{\KMMod}$ and that $\F=p^*\KM\in\DM(P)$ is the constant object with values $\KM$. Since $U$ and $V$ are discrete posets we have, by \S\ref{sec: definition derivator} 2), isomorphisms
        $$u_*u^*p^*\KM\simeq \bigoplus_{x\in U}\KM_x \qquad \mbox{and}\qquad v_!v^*p^*\KM \simeq \bigoplus_{y\in V}\KM_y\ .$$
        For $x\in U$ and $y\in V$, we can apply Lemma \ref{lem: base change localization triangle} to $Z=\{x,y\}$, to reduce the computation of the connecting morphism to the case where $P=Z$ has two elements. If $x<y$ then the connecting morphism is $a_x^y$ by definition. Otherwise $P$ is itself discrete and \S\ref{sec: definition derivator} 2) implies that we have $\F\simeq u_*u^*p^*\KM\oplus v_!v^*p^*\KM$, and the connecting morphism is zero. The claim follows.
        \item We now work in the general case of the lemma. We write $\F=\F\otimes p^*\KM$. By applying Lemma \ref{lem: projection formula localization triangle} for $\F'=p^*\KM$ and using the first step of the proof, we get a commutative diagram
        $$\xymatrix{
        u_*u^*\F \ar[r] & v_!v^*\F[1]\ar[d]^{\simeq} \\
        \F\otimes u_*u^*p^*\KM \ar[u]^{\simeq}\ar@{<->}[d]_{\simeq} \ar[r] & \F\otimes v_!v^*p^*\KM[1] \ar@{<->}[d]^{\simeq} \\
        \displaystyle\bigoplus_{x\in U}\F\otimes \KM_x \ar[r]& \displaystyle\bigoplus_{y\in V}\F\otimes \KM_y[1]
        }$$
        where the component of the bottom morphism indexed by $x\in U$ and $y\in V$ is $\mathrm{id}_\F\otimes a_x^y$ if $x<y$ and zero otherwise. Let us now fix $x\in U$ and $y\in V$ with $x<y$. By $2$-functoriality we have a commutative diagram
        $$\xymatrix{
        \F \ar@{=}[r] & \F \ar[d]\\
        (i_x)_!(i_x)^*\F \ar[u]& (i_y)_*(i_y)^*\F \\
        (i_x)_!(i_x)^*p^*(i_x)^*\F \ar@{=}[u]\ar[d] & (i_y)_*(i_y)^*p^*(i_y)^*\F \ar@{=}[u] \\
        p^*(i_x)^*\F \ar[r]_{p^*(i_{x<y})^*\F}& p^*(i_y)^*\F\ar[u]
        }$$
        where the values at $x$ of the vertical arrows on the left are isomorphisms and the values at $y$ of the vertical arrows on the right are isomorphisms. We then conclude that we have a commutative diagram
        $$\xymatrixcolsep{6pc}\xymatrix{
        \F\otimes \KM_x \ar[r]^-{\mathrm{id}\otimes a_x^y} \ar@{<->}[d]_{\simeq}& \F\otimes \KM_y[1]\\
        p^*(i_x)^*\F\otimes \KM_x \ar[r]_{p^*(i_{x<y})^*\F\otimes a_x^y}& p^*(i_y)^*\F\otimes \KM_y[1]\ar@{<->}[u]_{\simeq}
        }$$
        and the claim follows.
        \end{enumerate}
        \end{proof}
        
    \subsection{Postnikov systems from derivators}   
    
        Let $P$ be a finite poset and let $\sigma:P\to\mathbb{Z}_{\geq 1}$ be a strictly increasing map. This defines a finite decreasing filtration of $P$ by cosieves $V^k=\{x\in P \; ,\; \sigma(x)\geq k\}$ such that each complement $V^k\setminus V^{k+1}$ is a discrete poset (an antichain in $P$). We let $v^k:V^k\hookrightarrow P$.
        
        \begin{lem}\label{lem:postnikovsystemderivator}
        Let $\mathcal{F}\in\DM(P)$.
        \begin{enumerate}[1)]
        \item We set $F^k\mathcal{F}=(v^k)_!(v^k)^*\mathcal{F}$. We have a Postnikov system in $\DM(P)$:
        $$\xymatrix{
		\cdots &\ar[rr]&& F^3\mathcal{F} \ar[rr]\ar[ld] && F^2\mathcal{F}\ar[rr]\ar[ld]  && F^1\mathcal{F}=\mathcal{F} \ar[ld]\\
		&&G^3\mathcal{F} \ar[lu]^{+1}&& G^2\mathcal{F}\ar[ul]^{+1} && G^1\mathcal{F} \ar[ul]^{+1}& 
		}$$
		where the graded objects are given by 
		$$G^k\F\simeq \bigoplus_{\sigma(x)=k}p^*(i_x)^*\F\otimes \KM_x\ .$$
		\item For every integer $k$, the connecting morphism $G^k\F\rightarrow G^{k+1}\F[1]$ has its component indexed by $x$ and $y$ with $\sigma(x)=k$, $\sigma(y)=k+1$, given by
		$$\xymatrixcolsep{5pc}\xymatrix{ p^*(i_x)^*\F\otimes \KM_x \ar[r]^-{p^*(i_{x<y})^*\F\otimes a_x^y} & p^*(i_y)^*\F\otimes \KM_y[1] }$$
		if $x<y$, and zero otherwise.
		\item The above Postnikov system is functorial in $\F$.
		\end{enumerate}
        \end{lem}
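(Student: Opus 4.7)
My plan is to build the Postnikov system inductively from the localization triangle of Lemma \ref{lem:localizationtrianglederivator}. For each $k\geq 1$, the poset $V^k$ contains $V^{k+1}$ as a cosieve, and the complementary sieve $W^k:=V^k\setminus V^{k+1}=\{x : \sigma(x)=k\}$ is a discrete antichain. Writing $v'_{k+1}:V^{k+1}\hookrightarrow V^k$ and $u'_k:W^k\hookrightarrow V^k$, I would apply Lemma \ref{lem:localizationtrianglederivator} to $(v^k)^*\F\in\DM(V^k)$ and push the resulting triangle forward via the triangulated functor $(v^k)_!$ to obtain, in $\DM(P)$,
$$F^{k+1}\F\longrightarrow F^k\F\longrightarrow G^k\F\stackrel{+1}{\longrightarrow}$$
with $G^k\F:=(v^k)_!(u'_k)_*(u'_k)^*(v^k)^*\F$ (using that $(v^k)_!(v'_{k+1})_!=(v^{k+1})_!$ and $(v'_{k+1})^*(v^k)^*=(v^{k+1})^*$). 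Concatenating these triangles for $k\geq 1$ yields the Postnikov system. Part 3) on functoriality in $\F$ is automatic, since every functor involved is functorial and the localization triangle is functorial by Lemma \ref{lem:localizationtrianglederivator}.

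To identify $G^k\F$ with $\bigoplus_{\sigma(x)=k}p^*(i_x)^*\F\otimes\KM_x$, I would argue componentwise using axiom 2) of \S\ref{sec: definition derivator}. For $y\notin V^k$, both sides have zero component: the left-hand side because $(v^k)_!$ is extension by zero along the cosieve $v^k$, and the right-hand side by inspection of $\KM_x$. For $y\in V^k$, axiom 4) applied to $(u'_k)_*$ together with the observation that $W^k$ consists of minimal elements of $V^k$ shows that the comma category $y/W^k$ is $\{y\}$ if $y\in W^k$ and empty otherwise, so $(i_y)^*G^k\F$ is $(i_y)^*\F$ if $y\in W^k$ and zero otherwise. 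To upgrade this to an actual isomorphism in $\DM(P)$ rather than a componentwise one, I would decompose $(u'_k)_*(u'_k)^*(v^k)^*\F\simeq\bigoplus_{x\in W^k}(i_x^{V^k})_*(i_x)^*\F$ using that $W^k$ is an antichain of minimal elements and each $\{x\}\subset V^k$ is then itself a sieve, and produce the comparison map by adjunction together with the projection formula of Lemma \ref{lem: projection formula sieve cosieve}.

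For the connecting morphism in 2), the plan is to reduce to Lemma \ref{lem: connecting morphism discrete posets}. Consider the sieve $Y:=V^k\setminus V^{k+2}\subset V^k$, in which $W^k$ sits as a sieve and $W^{k+1}$ as the complementary cosieve, both discrete. Applying the octahedral axiom to the composition $F^{k+2}\F\to F^{k+1}\F\to F^k\F$ identifies the cofiber of $F^{k+2}\F\to F^k\F$ with $(v^k)_!(i_Y^{V^k})_*(i_Y^{V^k})^*(v^k)^*\F$, together with a distinguished triangle $G^{k+1}\F\to\mathrm{cofib}\to G^k\F\to G^{k+1}\F[1]$ whose last edge is precisely the connecting morphism of the Postnikov system. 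Via Lemmas \ref{lem: base change extension by zero} and \ref{lem: base change localization triangle}, this triangle is identified with the image under $(v^k)_!(i_Y^{V^k})_*$ of the localization triangle inside $Y$ for the discrete pair $(W^k,W^{k+1})$. Lemma \ref{lem: connecting morphism discrete posets} then yields the explicit formula $p^*(i_{x<y})^*\F\otimes a_x^y$ in each component.

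The main obstacle will be the last step: carefully verifying, via the octahedral axiom and repeated base changes, that the connecting morphism of the Postnikov system is realized as the connecting morphism of the single localization triangle on the sub-poset $Y$, so that Lemma \ref{lem: connecting morphism discrete posets} can be applied verbatim. The componentwise identification in step 2 and the functoriality in step 3 are essentially formal consequences of the derivator axioms.
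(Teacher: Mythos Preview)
Your overall strategy matches the paper's proof: for parts 1) and 3) the arguments are essentially identical, down to defining $G^k\F$ as $(v^k)_!$ of a localization-triangle cofiber in $\DM(V^k)$. The paper's identification of $G^k\F$ is slightly slicker---it simply cites the proof of Lemma~\ref{lem: connecting morphism discrete posets} to write $G^k\F\simeq\bigoplus_{\sigma(x)=k}\F\otimes\KM_x\simeq\bigoplus_{\sigma(x)=k}p^*(i_x)^*\F\otimes\KM_x$ in one step---but your componentwise/projection-formula argument reaches the same conclusion.

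For part 2) the paper takes a more direct route than your octahedral plan. Rather than assembling the cofiber of $F^{k+2}\F\to F^k\F$ and matching the resulting triangle against a pushed-forward localization triangle on $Y$, the paper simply applies $(i_Z^P)^*$ for the subposet $Z=\{x\in P:\sigma(x)\in\{k,k+1\}\}$ and invokes Lemma~\ref{lem: base change localization triangle}: restriction to $Z$ converts each of the two relevant triangles into localization triangles on $Z$, where $\sigma$ takes only two values, so Lemma~\ref{lem: connecting morphism discrete posets} applies verbatim. This sidesteps the octahedral axiom entirely, and with it the ``main obstacle'' you identified---namely the need to pin down the specific connecting morphism among the choices the octahedral axiom permits. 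Your approach would also go through (the uniqueness arguments in the proof of Lemma~\ref{lem:localizationtrianglederivator} can be adapted to justify the identification), but the paper's base-change reduction is shorter and avoids that verification altogether.
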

        
        \begin{proof}
        \begin{enumerate}[1)]
        \item The morphism $F^{k+1}\F\to F^k\F$ is defined as the composite
        $$(v^{k+1})_!(v^{k+1})^*\F\simeq (v^k)_!v_!v^*(v^k)^*\F\longrightarrow (v^k)_!(v^k)^*\F$$
        where $v:V^{k+1}\hookrightarrow V^{k}$ is a cosieve with complementary sieve $u:V^{k}\setminus V^{k+1}\hookrightarrow V^k$.
        According to Lemma \ref{lem:localizationtrianglederivator} this morphism fits into a distinguished triangle
        $$F^{k+1}\F\longrightarrow F^k\F \longrightarrow G^k\F\stackrel{+1}{\longrightarrow}$$
        with $G^k\F=(v^k)_!u_*u^*(v^k)^*\F$. Since $V^k\setminus V^{k+1}$ is a discrete poset we have as in the proof of Lemma \ref{lem: connecting morphism discrete posets} an isomorphism
        $$G^k\F\simeq \bigoplus_{\sigma(x)=k}\F\otimes \KM_x \simeq \bigoplus_{\sigma(x)=k} p^*(i_x)^*\F\otimes \KM_x\ .$$
        \item Applying Lemma \ref{lem: base change localization triangle} to $Z=\{x\in P ,\, \sigma(x)\in \{k,k+1\}\}$ we are reduced to the two-step case where $\sigma(P)\subset \{1,2\}$. In this case the claim is Lemma \ref{lem: connecting morphism discrete posets} and we are done.
        \item The functoriality statement follows from the functoriality of localization triangles (Lemma \ref{lem:localizationtrianglederivator}).
        \end{enumerate}
        \end{proof}
        
        \begin{rem}
        In the spirit of Remark \ref{rem: totally coherent} let us sketch a way to lift the partially incoherent Postnikov system of the above lemma to a totally coherent diagram\footnote{This was suggested to us by Martin Gallauer.}. The first step is to lift the horizontal morphisms to an object of $\DM(P\times [n])$ where $n$ is an integer such that $\sigma(P)\subset \{1,\ldots,n\}$. For this we consider the cosieve $v':V'\hookrightarrow P\times [n]$ consisting of elements $(x,i)$ such that $x\in V^{i+1}$. If $f:P\times [n]\to P$ denotes the natural projection then the object $(v')_!(v')^*f^*\F\in \DM(P\times [n])$ is a coherent lift of the composable morphisms $F^{k+1}\F\to F^{k}\F$ in $\DM(P)$. One can then produce the remainder of the Postnikov system in a coherent way as in Remark \ref{rem: totally coherent}.
        \end{rem}
        
        In the next section we will apply the functor $p_*$ to a Postnikov system as in Lemma \ref{lem:postnikovsystemderivator}. For this reason we now recast poset cohomology in the context of a general monoidal triangulated derivator.
        
        \begin{lem}\label{lem: projection formula}
        Let $P$ be a finite poset and let $x\in P$. For $M\in \DM(e)$ we have a functorial isomorphism:
        $$p_*(p^*M\otimes \KM_x)\simeq M\otimes C^{\bullet+1}(x)\ .$$
        \end{lem}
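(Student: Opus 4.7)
The approach is to use the projection formula \eqref{eq: projection formula star} to reduce to Proposition \ref{prop:Cx as holim}. Specialized to $f = p$, $\G = M$, $\F = \KM_x$, it gives a natural morphism
$$M \otimes p_* \KM_x \longrightarrow p_*(p^* M \otimes \KM_x),$$
and Proposition \ref{prop:Cx as holim}, transported via the morphism of derivators $\DM_{\KMMod} \to \DM$, identifies $p_* \KM_x$ with $C^{\bullet+1}(x)$ in $\DM(e)$. The entire content of the statement is therefore that this projection morphism is an isomorphism and natural in $M$.

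The key step is to check the projection formula on the ``elementary'' representations $\KM_{\leq y}$. A component-wise computation using Lemma \ref{lem: extension subposet} and axiom 4) of \S \ref{sec: definition derivator} shows that $\KM_{\leq y} \simeq (u_y)_* p_{P_{\leq y}}^* \KM$, where $u_y: P_{\leq y} \hookrightarrow P$ is the inclusion of the sieve $P_{\leq y} = \{z \in P: z \leq y\}$. Because $u_y$ is a sieve, Lemma \ref{lem: projection formula sieve cosieve} 1) supplies an isomorphism $p^* M \otimes \KM_{\leq y} \stackrel{\sim}{\to} (u_y)_* p_{P_{\leq y}}^* M$. Applying $p_*$ and using that $P_{\leq y}$ has greatest element $y$, so by Lemma \ref{lem: greatest least element} the unit $\id \to (p_{P_{\leq y}})_* p_{P_{\leq y}}^*$ is an isomorphism, yields $M \otimes p_* \KM_{\leq y} \xrightarrow{\sim} p_*(p^* M \otimes \KM_{\leq y})$, with both sides canonically $M$.

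To bootstrap from $\KM_{\leq y}$ to $\KM_x$ I would invoke the bounded resolution $\KM_x \xrightarrow{\sim} R^\bullet_x$ built in the proof of Proposition \ref{prop:Cx as holim}, whose terms are finite direct sums of objects of the form $\KM_{\leq y}$. Since $P$ is finite the complex is bounded, so it presents $\KM_x$ in $\DM_{\KMMod}(P)$, and hence in $\DM(P)$, as a finite iterated cone of $\KM_{\leq y}$'s. Both $M \otimes p_*(-)$ and $p_*(p^* M \otimes -)$ are triangulated functors commuting with finite direct sums, and the projection morphism is a natural transformation between them; the iso on each $\KM_{\leq y}$ therefore propagates to an iso on $\KM_x$ by induction on the length of the iterated cone. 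Functoriality in $M$ is clear at every step.

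The main technical point I anticipate is to upgrade the classical quasi-isomorphism $\KM_x \to R^\bullet_x$ in the abelian category $(\KMMod)^P$ to a genuine iterated-cone decomposition of $\KM_x$ in the derivator $\DM_{\KMMod}(P)$ that is preserved by the morphism $\DM_{\KMMod} \to \DM$. Once this translation between acyclic resolutions and Postnikov-type decompositions is justified in the derivator setting, the rest of the argument is formal.
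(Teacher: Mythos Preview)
Your proposal is correct and follows essentially the same route as the paper. The paper organizes the argument by calling an object $\F\in\DM(P)$ \emph{admissible} when the projection morphism $M\otimes p_*\F\to p_*(p^*M\otimes\F)$ is an isomorphism for all $M$, checks that $\KM_{\leq y}$ is admissible via the same sieve/greatest-element argument you give, and then uses the Bousfield--Kan resolution $\KM_x\xrightarrow{\sim}R^\bullet_x$ together with closure of admissibility under extensions, direct sums, and shifts to conclude; the final identification with $C^{\bullet+1}(x)$ is Proposition~\ref{prop:Cx as holim}, exactly as you outline.
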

        
        \begin{proof}
        Call $\mathcal{F}\in \DM(P)$ \emph{admissible} if for any $M\in \DM(e)$, the natural morphism 
        \begin{equation}\label{eq: projection proof}
        M\otimes p_*\mathcal{F} \longrightarrow p_*(p^*M\otimes\mathcal{F})
        \end{equation}
        defined in \S \ref{subsubsec: monoidal} \eqref{eq: projection formula star} is an isomorphism. Admissible objects satisfy the following properties.
        \begin{enumerate}[(a)]
        \item If $P$ has a greatest element then for every $N\in \DM(e)$, $p^*N$ is admissible. Indeed by Lemma \ref{lem: greatest least element} we have $p_*p^*\simeq \mathrm{id}_{\DM(e)}$ and \eqref{eq: projection proof} is isomorphic to the identity of $M\otimes N$.
        \item If $u:U\hookrightarrow P$ is a sieve and $\mathcal{G}\in \DM(U)$ is admissible, then $u_*\mathcal{G}$ is admissible. Indeed, let $v:P\setminus U\hookrightarrow P$ denote the cosieve complementary to $U$, we have $v^*(p^*M\otimes u_*\mathcal{G})\simeq v^*p^*M\otimes v^*u_*\mathcal{G}=0$ since $v^*u_*=0$. By Lemma \ref{lem: extension by zero} we thus have an isomorphism $p^*M\otimes u_*\mathcal{G} \simeq u_*u^*(p^*M\otimes u_*\mathcal{G})\simeq u_*((p\circ u)^*M\otimes \mathcal{G})$, and \eqref{eq: projection proof} is isomorphic to the natural morphism
        $$M\otimes (p\circ u)_*\mathcal{G} \longrightarrow (p\circ u)_*((p\circ u)^*M\otimes \mathcal{G})\ ,$$
        which is an isomorphism because $\mathcal{G}$ is admissible by assumption.
        \item By the naturality of \eqref{eq: projection proof}, an extension of admissible objects (and in particular a finite direct sum of admissible objects) is admissible. A shift of an admissible object is admissible.
        \end{enumerate}
        We now note that we have, as in the proof of Proposition \ref{prop:Cx as holim}, a resolution $\KM_x\stackrel{\sim}{\rightarrow} R^\bullet_x$ with 
        $$R^n_x = \bigoplus_{[x_1<\cdots <x_n<x_{n+1}=x]} \KM_{\leq x_1}\ .$$
        For every $y\leq x$ we have $\KM_{\leq y} \simeq (u_{\leq y})_*(p_{\leq y})^*\KM$, where $u_{\leq y}:P_{\leq y}\hookrightarrow P$ and $p_{\leq y}:P_{\leq y}\rightarrow e$ are the inclusion and projection maps of the subposet $P_{\leq y}=\{a\in P\, , \, a\leq y\}$. Since $y$ is the greatest element of $P_{\leq y}$, we get by (a) above that $(p_{\leq y})^*\KM$ is admissible. Since $P_{\leq y}$ is a sieve in $P$, we get by (b) above that $\KM_{\leq y}$ is admissible. By (c) above we thus get that every $R^n_x$ is admissible and then that $\KM_x$ is admissible. The claim then follows from Proposition \ref{prop:Cx as holim} since $p_*$ is the homotopy limit functor.
        \end{proof}
        
        The next proposition will be our main tool in the next section. It computes a homotopy limit in the shape of a Postnikov system.
        
        \begin{prop}\label{prop: pushed}
        Let $\mathcal{F}\in\DM(P)$.
        \begin{enumerate}[1)]
        \item We set $F^kp_*\mathcal{F}=p_*(v^k)_!(v^k)^*\mathcal{F}$. We have a functorial Postnikov system in $\DM(e)$:
        $$\xymatrix{
		\cdots & \ar[rr] && F^2p_*\mathcal{F}\ar[rr]\ar[ld]  && F^1p_*\mathcal{F}=p_*\mathcal{F} \ar[ld]\\
		&& G^2p_*\mathcal{F}\ar[ul]^{+1} && G^1p_*\mathcal{F} \ar[ul]^{+1}& 
		}$$
		where the graded objects are given by 
		$$G^kp_*\F\simeq \bigoplus_{\sigma(x)=k}(i_x)^*\F\otimes C^{\bullet+1}(x)\ .$$
		\item For every integer $k$, the connecting morphism $G^kp_*\F\rightarrow G^{k+1}p_*\F[1]$ has its component indexed by $x$ and $y$ with $\sigma(x)=k$, $\sigma(y)=k+1$, given by
		$$\xymatrixcolsep{5pc}\xymatrix{ (i_x)^*\F\otimes C^{\bullet+1}(x) \ar[r]^-{(i_{x<y})^*\F\otimes b_x^y[1]} & (i_y)^*\F\otimes C^{\bullet+2}(y)}$$
		if $x<y$, and zero otherwise.
		\item The above Postnikov system is functorial in $\F$.
		\end{enumerate}
        \end{prop}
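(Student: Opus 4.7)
The plan is to obtain the desired Postnikov system in $\DM(e)$ by applying the triangulated functor $p_*$ to the derivator-level Postnikov system of Lemma~\ref{lem:postnikovsystemderivator}, and then identify each piece using the projection formula of Lemma~\ref{lem: projection formula} together with the interpretation of the connecting morphisms $b_x^y$ as images of the morphisms $a_x^y$ under homotopy limit (Proposition~\ref{prop: connecting morphism holim}).

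First, since $p_*$ is a triangulated functor, applying it to the distinguished triangles $F^{k+1}\F\to F^k\F\to G^k\F\stackrel{+1}{\to}$ of Lemma~\ref{lem:postnikovsystemderivator} yields distinguished triangles $F^{k+1}p_*\F\to F^kp_*\F\to p_*G^k\F\stackrel{+1}{\to}$ in $\DM(e)$ fitting into a Postnikov system, and I set $G^kp_*\F:=p_*G^k\F$. Using the identification $G^k\F\simeq\bigoplus_{\sigma(x)=k}p^*(i_x)^*\F\otimes\KM_x$ from Lemma~\ref{lem:postnikovsystemderivator} 1), applying $p_*$ and invoking Lemma~\ref{lem: projection formula} with $M=(i_x)^*\F$ gives the isomorphism $G^kp_*\F\simeq\bigoplus_{\sigma(x)=k}(i_x)^*\F\otimes C^{\bullet+1}(x)$ asserted in 1).

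For 2), Lemma~\ref{lem:postnikovsystemderivator} 2) identifies the $(x,y)$-component of the connecting morphism $G^k\F\to G^{k+1}\F[1]$ with $(p^*(i_{x<y})^*\F)\otimes a_x^y$ when $x<y$. I would decompose this as the composite $\bigl(p^*(i_{x<y})^*\F\otimes\id_{\KM_y[1]}\bigr)\circ\bigl(\id_{p^*(i_x)^*\F}\otimes a_x^y\bigr)$, apply $p_*$, and use naturality in $M$ of the projection-formula isomorphism of Lemma~\ref{lem: projection formula}. The first composand then becomes $(i_{x<y})^*\F\otimes\id_{C^{\bullet+2}(y)}$, while the second becomes $\id_{(i_x)^*\F}\otimes b_x^y[1]$ provided one knows that, under the projection-formula identification, $p_*(a_x^y)$ corresponds to $b_x^y[1]$. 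This last assertion is exactly Proposition~\ref{prop: connecting morphism holim} transported through the fixed monoidal morphism $\DM_{\KMMod}\to\DM$: its proof relies on a cone resolution built from $R^\bullet_x$, $R^\bullet_y$ and the complex-level $b_x^y$, and that resolution tensored with $p^*M$ remains $p_*$-acyclic by the admissibility argument inside the proof of Lemma~\ref{lem: projection formula}, so the computation of the connecting morphism goes through verbatim in $\DM$. Composing the two factors then produces the claimed $(x,y)$-component $(i_{x<y})^*\F\otimes b_x^y[1]$, and the vanishing for $x\not<y$ is inherited from the derivator-level Postnikov system.

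Assertion 3) follows from the functoriality in $\F$ of the Postnikov system of Lemma~\ref{lem:postnikovsystemderivator} 3) combined with the naturality in the variable $M=(i_x)^*\F$ of the projection-formula isomorphism of Lemma~\ref{lem: projection formula}. I expect the hard part to be the compatibility step used in 2): one has to verify carefully that the resolutions of Lemma~\ref{lem: projection formula} and Proposition~\ref{prop: connecting morphism holim} can be assembled coherently in the abstract monoidal derivator setting, so that the projection-formula isomorphism genuinely intertwines $p_*(\id\otimes a_x^y)$ with $\id\otimes b_x^y[1]$ on the nose rather than merely up to a noncanonical isomorphism.
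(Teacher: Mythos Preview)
Your proposal is correct and follows essentially the same approach as the paper: apply $p_*$ to the Postnikov system of Lemma~\ref{lem:postnikovsystemderivator}, identify the graded pieces via Lemma~\ref{lem: projection formula}, and identify the connecting morphisms via Proposition~\ref{prop: connecting morphism holim}. You spell out the compatibility check (that the projection-formula isomorphism intertwines $p_*(\id\otimes a_x^y)$ with $\id\otimes b_x^y[1]$) more carefully than the paper, which simply asserts that the description of the connecting morphisms ``follows from Proposition~\ref{prop: connecting morphism holim}''; your admissibility argument for the cone resolution is the right way to make this precise.
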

        
        \begin{proof}
        This follows from applying the triangulated functor $p_*$ to the Postnikov system of Lemma \ref{lem:postnikovsystemderivator} and setting $F^kp_*\F:=p_*F^k\F$ and $G^kp_*\F:=p_*G^k\F$. The description of the graded objects follows from Lemma \ref{lem: projection formula}. The description of the connecting morphisms follows from Proposition \ref{prop: connecting morphism holim}.
        \end{proof}
        
        \begin{rem}\label{rem: functoriality combinatorial postnikov system}
        The Postnikov system of Proposition \ref{prop: pushed} is functorial with respect to isomorphisms of posets in the following sense. Let $\alpha:P\to P'$ be an isomorphism of posets; we set $\sigma'=\sigma\circ\alpha^{-1}$. For $\F'\in \DM(P')$ there is a natural isomorphism $(p')_*\F'\stackrel{\sim}{\to} p_*\alpha^*\F'$ and a natural isomorphism between the Postnikov system corresponding to $\F'\in \DM(P')$ and the one corresponding to $\alpha^*\F'\in \DM(P)$. The corresponding isomorphism at the level of graded objects has component indexed by $x'\in P'$ and $x\in P$ given by 
        $$\xymatrixcolsep{5pc}\xymatrix{ (i_{x'})^*\F'\otimes C_{P'}^{\bullet+1}(x') \ar[r]^{\id\otimes C^{\bullet+1}(\alpha)}_{\sim} & (i_x)^*\alpha^*\F'\otimes C^{\bullet+1}_P(x) }$$
        if $\alpha(x)=x'$ and zero otherwise, where $C^{\bullet+1}(\alpha)$ was defined in Remark \ref{rem: functoriality complexes C}. This follows easily from Remark \ref{rem: functoriality holim}.
        \end{rem}

\section{The main theorem}\label{sec: sec main theorem}

    \subsection{Categories of motives} 
    
        \subsubsection{Conventions on schemes}
        
            In what follows we fix a noetherian base scheme $B$ and write ``scheme'' for ``separated scheme over $B$''.
            
        \subsubsection{Motives over a scheme}
    
            For every scheme $X$ we have, following Morel--Voevodsky \cite{morelvoevodsky} and Ayoub \cite{ayoubPhD1, ayoubPhD2}, a unital symmetric monoidal triangulated derivator $\DA_X$ of \'{e}tale motives over $X$ with coefficients in $\KM$. 
            It is a particular case of a stable homotopical functor $\mathbb{SH}_{\mathfrak{M}}^T$ constructed in
            \cite[D\'{e}finition 4.5.21]{ayoubPhD2}, taking for the model category $\mathfrak{M}$ (the category of ``coefficients'') the category of complexes of $\KM$-modules, for $T$ the Tate motive (the stabilization consists in formally inverting the functor $T\otimes-$), and considering the \'{e}tale topology;
            the axioms of a unital symmetric monoidal triangulated derivator are proved to hold
            in [\emph{loc. cit.}, \S 4.5]. Other constructions lead to equivalent (under certain assumptions) categories of motives, such as Beilinson motives, \'{e}tale motives with transfers, and $h$-motives (see \cite[\S 16.2]{cisinskideglise}, \cite[Th\'{e}or\`{e}me B.1]{ayoubetale} and \cite[Corollary 5.5.5]{cisinskidegliseetale}). 
            
            \begin{rem}
            By making other choices of $\mathfrak{M}$ and $T$ one is led to other categories such as the Morel--Voevodsky stable $\mathbb{A}^1$-homotopy categories of schemes $\mathbb{SH}$, where our results below still hold.
            \end{rem}
            
            There is a natural morphism of unital symmetric monoidal triangulated derivators $\DM_{\KMMod}\to \DA_X$, so that the derivator $\DM=\DA_X$ satisfies the assumptions of \S\ref{subsubsec: coefficients}. In what follows we will make an abuse of notation and simply write $\DA_X$ for the ground category $\DA_X(e)$.
            
            Let us note that $X\mapsto \DA_X$ satisfies the ``six functor formalism'', for which we will not give a definition here but rather refer to Ayoub. This means that it has the same formal functoriality properties as derived categories of sheaves in familiar contexts. In particular, it underlies a cross functor \cite[D\'{e}finition 1.2.12, Scholie 1.4.2]{ayoubPhD1}. This notion (defined in \S 1.2 in \emph{loc. cit.}) abstracts the properties of the exchange morphisms between $!$ and $*$ pullbacks and/or pushforwards (such as the morphism appearing in the proper base change theorem).
            
            Another important feature that we will use is the existence of functorial localization triangles [\emph{loc. cit.}, \S 1.4.4] for $\F\in\DA_X$, where $i:Z\hookrightarrow X$ denotes a closed immersion and $j:X\setminus Z\hookrightarrow X$ denotes the complementary open immersion:
            \begin{equation}\label{eq: localization triangle geometric}
            j_!j^!\F\longrightarrow \F\longrightarrow i_*i^*\F\stackrel{+1}{\longrightarrow}\ .
            \end{equation}

        \subsubsection{Motives over a diagram of schemes}
        
            In the proof of the main theorem below we will make use of categories of motives over diagrams of schemes, introduced by Ayoub. A \emph{diagram of schemes} $(P,\mathcal{X})$ is the datum of a finite poset $P$ along with a functor $\mathcal{X}:P^{\mathrm{op}}\to \Sch$. (Our convention is actually opposed to Ayoub's, see Remark \ref{rem: conventions diagrams schemes} below.) For $X$ a scheme we have the constant diagram of schemes $(P,X)$ where all the transition maps are the identity of $X$. We view a scheme as the constant diagram of schemes on the poset with one element: $X=(e,X)$. Diagrams of schemes form a $2$-category \cite[D\'{e}finition 2.4.4]{ayoubPhD1} in which a morphism $\alpha:(P,\mathcal{X})\to (Q,\mathcal{Y})$ consists of a morphism of posets $\alpha:P\to Q$ along with a natural transformation $\mathcal{X}\Rightarrow \mathcal{Y}\circ \alpha$. 
        
            Ayoub defines a ($1$-contravariant, $2$-covariant) $2$-functor 
            $$(P,\mathcal{X})\mapsto \DA(P,\mathcal{X})$$ 
            from the $2$-category of diagrams of schemes to the $2$-category of triangulated categories which extends the derivator $P\mapsto \DA(P,X)=\DA_X(P)$ for every scheme $X$. This functor satisfies the axioms of an \emph{algebraic derivator} \cite[2.4.2]{ayoubPhD1} that we will not discuss here. We simply note that for $\alpha:(P,\mathcal{X})\to (Q,\mathcal{Y})$ a morphism of diagrams of schemes, the natural morphism $\alpha^*:\DA(Q,\mathcal{Y})\to \DA(P,\mathcal{X})$ admits a right adjoint $\alpha_*:\DA(P,\mathcal{X})\to \DA(Q,\mathcal{Y})$. The existence of left adjoints is more constrained.
        
            \begin{rem}\label{rem: conventions diagrams schemes}
            Our convention for diagrams of schemes and for the variance of $\DA$ is opposed to Ayoub's but is consistent with our variance convention for derivators (see Remark \ref{rem: convention variance derivator}) and with the convention for posets of strata introduced in the next subsection.
            \end{rem}
        
    \subsection{The main theorem}\label{sec: main theorem}
    
        Let $X$ be a scheme and let $X_0$ be a dense open subscheme of $X$ with complement $Z$. We denote by $j:X_0\hookrightarrow X$ and $i:Z\hookrightarrow X$ the corresponding open and closed immersions. Let us be given a (finite) \emph{stratification} of $Z$, i.e., a finite partition of $Z$ by locally closed subschemes called \emph{strata} such that the Zariski closure of each stratum is a union of strata. The set $P$ of strata is naturally endowed with the structure of a poset where for strata $S,T\in P$ we have:
		$$S\leq T \;\;\Leftrightarrow \;\; \overline{S}\supset T \ .$$
		We thus get a stratification of $X$ indexed by the extended poset $\hat{P}=\{X_0\}\cup P$ with $X_0<S$ for all $S\in P$. 
		
		For $S\in P$ we have defined (see \S\ref{subsec:def C}) a complex of $\KM$-modules $C^\bullet(S)$ which computes the reduced cohomology groups of the poset $P_{<S}$. For strata $S,T\in P$ with $S\lessdot T$ we have defined (see \S\ref{subsec: connecting morphisms}) a morphism of complexes $b_S^T:C^\bullet(S)\to C^\bullet(T)[1]$. We also define $C^\bullet(X_0)$ to be the complex $\KM$ concentrated in degree zero. For a minimal stratum $S\in P$, i.e., such that $X_0\lessdot S$ in $\hat{P}$, we have a natural (iso)morphism of complexes $b_{X_0}^S:C^\bullet(X_0)\to C^\bullet(S)[1]$.
		
		We fix a strictly increasing map $\sigma:\hat{P}\rightarrow \mathbb{Z}$, and we assume that $\sigma(X_0)=0$. Such a map always exists. If $P$ is graded then we may take $\sigma=\rk$, the rank function.
		
		In the statement of the next theorem, we will use the following ``restriction'' morphisms of functors
		(for strata $S\leq T$):
		\begin{equation}\label{eq: res}
		\rho_S^T:
		(i_{\overline{S}}^X)_*(i_{\overline{S}}^X)^*\longrightarrow (i_{\overline{S}}^X)_*(i_{\overline{T}}^{\overline{S}})_*(i_{\overline{T}}^{\overline{S}})^*(i_{\overline{S}}^X)^* \simeq (i_{\overline{T}}^X)_*(i_{\overline{T}}^X)^*\ .
		\end{equation}
		
        \begin{thm}\label{thm:maintheorem}
        Let $\mathcal{F}\in \DA_X$ and let us set $\G=j_!j^!\mathcal{F}$.
		\begin{enumerate}[1)]
		\item There is a Postnikov system in $\DA_X$:
		$$\xymatrix{
		\cdots &\ar[rr]&&F^2\G\ar[rr]\ar[ld] && F^1\G\ar[rr]\ar[ld]  && F^0\G= \G \ar[ld]\\
		&&G^2\G\ar[lu]^{+1}&& G^1\G\ar[ul]^{+1} && G^0\G \ar[ul]^{+1}& 
		}$$
		where the graded objects are given by
		$$G^k\G = \bigoplus_{\sigma(S)=k} (i_{\overline{S}}^X)_*(i_{\overline{S}}^X)^*\F \otimes C^\bullet(S) \ .$$
		\item For every integer $k$, the connecting morphism $G^k\G\rightarrow G^{k+1}\G[1]$ has its component indexed by $S$ and $T$ with $\sigma(S)=k$, $\sigma(T)=k+1$, given by
		$$\xymatrixcolsep{4pc}\xymatrix{
		(i_{\overline{S}}^X)_*(i_{\overline{S}}^X)^*\F \otimes C^\bullet(S) \ar[r]^-{\rho_S^T\F\otimes b_S^T} & (i_{\overline{T}}^X)_*(i_{\overline{T}}^X)^*\F\otimes C^\bullet(T)[1]
		}$$
		if $S<T$ and zero otherwise.
		\item The above Postnikov system is functorial in $\F$.
		\end{enumerate}
		\end{thm}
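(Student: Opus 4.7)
The strategy is to apply Proposition \ref{prop: pushed} to a derivator-level object on the stratum poset $P$ and then extend the resulting Postnikov system by one step at the bottom using the localization triangle \eqref{eq: localization triangle geometric}. The algebraic derivator framework ensures functoriality in $\F$ throughout.

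I would first introduce the diagram of schemes $(P,\mathcal{X}_P)$ with $\mathcal{X}_P(S) = \overline{S}$ for $S \in P$ and closed immersions as transition maps. The canonical morphism of diagrams of schemes $e_P \colon (P,\mathcal{X}_P) \to X$ encoding the $i_{\overline{S}}^X$ factors through the constant diagram $(P,X)$ as $e_P = p \circ \iota$, with $\iota \colon (P,\mathcal{X}_P) \to (P,X)$ the identity on $P$ with closed immersions on the components and $p \colon (P,X) \to X$ the projection. From $\F \in \DA_X$ I would associate the functorial object
$$\hat{\F}_P := \iota_* \iota^* p^* \F\,[-1] \ \in\ \DA_X(P),$$
whose components at $S$ are $(i_{\overline{S}}^X)_* (i_{\overline{S}}^X)^* \F\,[-1]$ and whose transition morphisms are induced by the restrictions $\rho_S^T$ of \eqref{eq: res}. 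The shift $[-1]$ is inserted so that the graded pieces eventually produced by Proposition \ref{prop: pushed} carry the complexes $C^\bullet(S)$ of the theorem, rather than the $C^{\bullet+1}(S)$ that naturally appear there.

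The main technical point, which I expect to be the central obstacle, is the identification $p_{P*} \hat{\F}_P \simeq i_* i^* \F\,[-1]$ in $\DA_X$. By the composition law for pushforwards in algebraic derivators this reduces to $(e_P)_* (e_P)^* \F \simeq i_* i^* \F$, a motivic descent statement reflecting the set-theoretic equality $Z = \bigcup_{S \in P} \overline{S}$. I would prove it by induction on $|P|$, the inductive step combining Lemma \ref{lem: base change localization triangle} with the geometric localization triangle \eqref{eq: localization triangle geometric} applied to a judiciously chosen decomposition of $P$ (for instance, around a fixed maximal stratum, whose closure is a single closed stratum of $Z$).

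Once this identification is available, applying Proposition \ref{prop: pushed} to $\hat{\F}_P$ with $\sigma|_P$ (which takes values in $\mathbb{Z}_{\geq 1}$ since $\sigma(X_0) = 0$) yields a functorial Postnikov system in $\DA_X$ for $i_* i^* \F\,[-1]$, with graded pieces $\bigoplus_{\sigma(S) = k} (i_{\overline{S}}^X)_* (i_{\overline{S}}^X)^* \F \otimes C^\bullet(S)$ and connecting morphisms $\rho_S^T \F \otimes b_S^T$ for $k \geq 1$. To finish, I would extend this Postnikov system by one step at the bottom using the rotated, functorial localization triangle $i_* i^* \F\,[-1] \to \G \to \F \stackrel{+1}{\to}$ coming from \eqref{eq: localization triangle geometric}, setting $F^0 \G := \G$ and $G^0 \G := \F = \F \otimes C^\bullet(X_0)$. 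Unpacking the resulting connecting morphism $G^0 \G \to G^1 \G[1]$ at each minimal stratum $S$ recovers the theorem's formula $\rho_{X_0}^S \F \otimes b_{X_0}^S$, with $\rho_{X_0}^S$ the unit $\F \to (i_{\overline{S}}^X)_* (i_{\overline{S}}^X)^* \F$ and $b_{X_0}^S \colon C^\bullet(X_0) \xrightarrow{\sim} C^\bullet(S)[1]$ the canonical isomorphism. Functoriality in $\F$ of the whole Postnikov system follows from that of each ingredient: the construction of $\hat{\F}_P$, Proposition \ref{prop: pushed}, and the localization triangle \eqref{eq: localization triangle geometric}.
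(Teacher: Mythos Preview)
Your proposal is correct and follows the same architecture as the paper's proof: peel off the first triangle via the geometric localization triangle, build the object $\mathcal{H}=\iota_*\iota^*p^*\F\in\DA_X(P)$ from the diagram of closures, identify $p_*\mathcal{H}$ with $i_*i^*\F$, and then feed $\mathcal{H}$ into Proposition~\ref{prop: pushed}. The only real difference lies in the ``main technical point''. The paper does not argue by induction on $|P|$; instead it factors your $e_P$ through $Z$ as $e_P=i\circ s$ with $s:(P,\mathcal{Z})\to Z$ and invokes \cite[Lemma~1.18]{ayoubzucker}, which states directly that the unit $\id_{\DA_Z}\to s_*s^*$ is an isomorphism for a closed cover by closures of strata. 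This is both shorter and safer: your inductive sketch around a maximal stratum $T$ is plausible but would need care, since removing $T$ from $P$ does not give a stratification of $Z$ indexed by $P\setminus\{T\}$ (the remaining closures $\overline{S}$ may still contain $T$, so one is comparing two closed covers of the \emph{same} $Z$ rather than passing to a smaller closed subscheme), and the resulting comparison of homotopy limits is not immediate from Lemma~\ref{lem: base change localization triangle} alone.
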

		
		\begin{proof}
		We proceed in three steps.
		\begin{enumerate}[a)]
		\item We construct the first triangle. The (rotated) localization triangle \eqref{eq: localization triangle geometric} reads
	    $$ i_*i^*\mathcal{F}[-1]\longrightarrow j_!j^!\mathcal{F} \longrightarrow \mathcal{F} \stackrel{+1}{\longrightarrow}$$
	    and provides the first triangle of the Postnikov system, with $F^1\mathcal{G}=i_*i^*\mathcal{F}[-1]$ and $G^0\mathcal{G}=\mathcal{F}$. It is functorial in $\F$.
	    \item We work with motives over diagrams of schemes. We consider the diagram of schemes $(P,\mathcal{Z})$ where $\mathcal{Z}:P^{\mathrm{op}}\rightarrow \mathrm{Sch}$ is defined by $S\mapsto \overline{S}$ and where the transition morphisms are the natural closed immersions. We have a natural morphism of diagram of schemes $s:(P,\mathcal{Z})\rightarrow Z$ induced by the closed immersions $\overline{S}\hookrightarrow Z$. This was previously considered by Ayoub and Zucker who proved \cite[Lemma 1.18]{ayoubzucker} that the natural counit $\mathrm{id}_{\DA_Z}\rightarrow s_*s^*$ is an isomorphism. We thus have an isomorphism in $\DA_Z$: 
	    $$i_*i^*\mathcal{F}\simeq i_*s_*s^*i^*\mathcal{F}\ .$$
	    Let us recall that $(P,X)$ denotes a constant diagram of schemes. We have a natural morphism of diagrams of schemes $r:(P,\mathcal{Z})\rightarrow (P,X)$ induced by the closed immersions $\overline{S}\hookrightarrow X$. If we also denote by $p:(P,X)\rightarrow (e,X)=X$ the projection to a point, we have the following commutative diagram:
	    $$\xymatrix{
	    (P,\mathcal{Z}) \ar[r]^r \ar[d]_s & (P,X) \ar[d]^p \\
	    Z \ar[r]_i & X
	    }$$
	    We thus have an isomorphism
	    $$F^1\mathcal{G}\simeq p_*\mathcal{H}[-1]$$
	    where we set $\mathcal{H}=r_*r^*p^*\mathcal{F} \in \DA(P,X)=\DA_X(P)$.
	    It is easy to see, using the axiom DerAlg 3d in \cite[D\'{e}finition 2.4.12]{ayoubPhD1}, that the value of $\mathcal{H}$ at a stratum $S$ is $(i_{\overline{S}}^X)_*(i_{\overline{S}}^X)^*\F$. Moreover, for strata $S\leq T$ the transition map from the value at $S$ to the value at $T$ is the restriction morphism $\rho_S^T\F$ defined in \eqref{eq: res}.
	    \item We construct the Postnikov system. By applying Proposition \ref{prop: pushed} 1) to the object $\mathcal{H}\in \DA_X(P)$ we get a Postnikov system in $\DA_X$:
	    $$\xymatrix{
	    \cdots\ar[rr] && F^2p_*\mathcal{H} \ar[rr]\ar[ld]  && F^1p_*\mathcal{H}=p_*\mathcal{H}= F^1\G[1] \ar[ld]\\
		& G^2p_*\mathcal{H}\ar[ul]^{+1} && G^1p_*\mathcal{H} \ar[ul]^{+1}& 
		}$$
		with 
		$$G^kp_*\mathcal{H} \simeq \bigoplus_{\sigma(S)=k} (i_{\overline{S}}^X)_*(i_{\overline{S}}^X)^*\F\otimes C^{\bullet+1}(S)\ .$$
		This is, up to a shift, the remainder of the Postnikov system promised in the theorem, i.e., we set, for $k\geq 1$,
		$$F^k\G = F^kp_*\mathcal{H}[-1] \quad \mbox{and} \quad G^k\G = G^kp_*\mathcal{H}[-1]\ .$$
		The description of the connecting morphisms follows from Proposition \ref{prop: pushed} 2). (The connecting morphism $G^0\F\to G^1\F[1]$ needs to be treated separately; it is the composite $\F\to i_*i^*\F\to \bigoplus_{\sigma(S)=1}(i_{\overline{S}}^X)_*(i_{\overline{S}}^X)^*\F$ which is the sum of the morphisms $\rho_{X_0}^S\F$.) The functoriality statement follows from Proposition \ref{prop: pushed} 3).
        \end{enumerate}
		\end{proof}
		
		For any ($B$-)scheme $X$ let us denote by $a_X:X\to B$ its structural map. The next corollary expresses the ``compactly supported cohomology'' of a motivic sheaf $\F$ on the open $X_0$ in terms of ``compactly supported cohomology'' of $\F$ on all the closures of strata.
		
		\begin{coro}\label{coro: postnikov system base}
		Let $\F\in \DA_X$ and let us set $M=(a_{X_0})_!j^!\F\in \DA_B$.
		\begin{enumerate}[1)]
		\item 
		There is a Postnikov system in $\DA_B$:
		$$\xymatrix{
		\cdots &\ar[rr]&&F^2M\ar[rr]\ar[ld] && F^1M\ar[rr]\ar[ld]  && F^0M= M \ar[ld]\\
		&&G^2M\ar[lu]^{+1}&& G^1M\ar[ul]^{+1} && G^0M \ar[ul]^{+1}& 
		}$$
		where the graded objects are given by
		$$G^kM= \bigoplus_{\sigma(S)=k} (a_{\overline{S}})_!(i_{\overline{S}}^X)^*\F \otimes C^\bullet(S) \ .$$
		\item For every integer $k$, the connecting morphism $G^kM\rightarrow G^{k+1}M[1]$ has its component indexed by $S$ and $T$ with $\sigma(S)=k$, $\sigma(T)=k+1$, given by
		$$\xymatrixcolsep{4pc}\xymatrix{
		(a_{\overline{S}})_!(i_{\overline{S}}^X)^*\F \otimes C^\bullet(S) \ar[r]^-{\rho_S^T\F\otimes b_S^T} & (a_{\overline{T}})_!(i_{\overline{T}}^X)^*\F\otimes C^\bullet(T)[1]
		}$$
		if $S<T$ and zero otherwise.
		\item The above Postnikov system is functorial in $\F$.
		\end{enumerate}
		\end{coro}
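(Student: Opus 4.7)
The plan is to obtain the desired Postnikov system by simply applying the triangulated functor $(a_X)_!$ to the Postnikov system of Theorem \ref{thm:maintheorem} and identifying the resulting objects and morphisms. Concretely, set $F^kM := (a_X)_! F^k\G$ and $G^kM := (a_X)_! G^k\G$. Since $(a_X)_!$ is triangulated, each distinguished triangle $F^{k+1}\G \to F^k\G \to G^k\G \stackrel{+1}{\to}$ in $\DA_X$ yields a distinguished triangle in $\DA_B$, so the whole Postnikov system transports.

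The next step is to identify these objects. First, $F^0M = (a_X)_! \G = (a_X)_! j_! j^!\F = (a_X\circ j)_! j^!\F = (a_{X_0})_! j^!\F = M$, which gives the base of the system. For the graded pieces, one uses two facts from the six functor formalism: (i) for a closed immersion $i$ one has $i_* \simeq i_!$, so $(i_{\overline{S}}^X)_*(i_{\overline{S}}^X)^*\F \simeq (i_{\overline{S}}^X)_!(i_{\overline{S}}^X)^*\F$; (ii) the composition rule $(a_X)_!(i_{\overline{S}}^X)_! \simeq (a_{\overline{S}})_!$. Combining these, the non-tensored factor becomes $(a_{\overline{S}})_!(i_{\overline{S}}^X)^*\F$. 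To pull the tensor factor $C^\bullet(S)$ out of $(a_X)_!$, I invoke the projection formula $(a_X)_!(\mathcal{E}\otimes (a_X)^*N)\simeq (a_X)_!\mathcal{E}\otimes N$, applied with $N = C^\bullet(S)$ viewed as an object of $\DA_B$ via the morphism $\DM_{\KMMod}\to \DA_B$; here $(a_X)^*C^\bullet(S)$ agrees with the copy of $C^\bullet(S)$ sitting inside $\DA_X$ via $\DM_{\KMMod}\to \DA_X$, because both are obtained by pullback from the base derivator. This yields the claimed formula $G^kM \simeq \bigoplus_{\sigma(S)=k}(a_{\overline{S}})_!(i_{\overline{S}}^X)^*\F \otimes C^\bullet(S)$.

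For the connecting morphisms, I apply $(a_X)_!$ to the explicit formula $\rho_S^T\F\otimes b_S^T$ of Theorem \ref{thm:maintheorem}(2). The factor $b_S^T$ is a morphism between (pullbacks of) complexes of $\KM$-modules, so it is preserved by the projection formula, giving the $\otimes b_S^T$ part. The $\rho_S^T$ part is a natural transformation between functors of the form $(i_{\overline{S}}^X)_*(i_{\overline{S}}^X)^* \to (i_{\overline{T}}^X)_*(i_{\overline{T}}^X)^*$ built from an adjunction unit and the identification $(i_{\overline{S}}^X)_*(i_{\overline{T}}^{\overline{S}})_* \simeq (i_{\overline{T}}^X)_*$; after applying $(a_X)_!$ and using $i_* = i_!$ on closed immersions together with the functoriality $(a_X)_!(i_{\overline{S}}^X)_! \simeq (a_{\overline{S}})_!$, one obtains the analogous natural transformation $(a_{\overline{S}})_!(i_{\overline{S}}^X)^*\F\to (a_{\overline{T}})_!(i_{\overline{T}}^X)^*\F$, which is what the corollary denotes by $\rho_S^T\F$ by abuse of notation.

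Finally, functoriality in $\F$ is automatic, being inherited from the functoriality statement in Theorem \ref{thm:maintheorem}(3) together with the functoriality of $(a_X)_!$ and of the projection formula isomorphism. The only mildly delicate point is the compatibility of $(a_X)^*$ with the realization of $C^\bullet(S)$ as an object of both $\DA_X$ and $\DA_B$ via the coefficient morphism from $\DM_{\KMMod}$; but this is precisely the content of the assumption that the coefficient morphism $\DM_{\KMMod}\to \DA_{(-)}$ is a morphism of derivators, hence commutes with $(a_X)^*$. No other serious obstacle arises: the entire proof is a formal consequence of the six functor formalism applied to the system already built in Theorem \ref{thm:maintheorem}.
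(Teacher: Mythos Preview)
Your proposal is correct and follows essentially the same approach as the paper: apply $(a_X)_!$ to the Postnikov system of Theorem~\ref{thm:maintheorem}, use the projection formula to extract $C^\bullet(S)$, and use $i_*=i_!$ for closed immersions together with $(a_X)_!(i_{\overline{S}}^X)_!\simeq (a_{\overline{S}})_!$ to identify the graded pieces. You spell out a few more details (the identification of $F^0M$, the connecting morphisms, the compatibility of the coefficient morphism with $(a_X)^*$), but the argument is the same.
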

		
		\begin{proof}
		This follows from applying the functor $(a_X)_!$ to the Postnikov system of Theorem \ref{thm:maintheorem}. By the projection formula we have an isomorphism
		$$(a_X)_!\left((i_{\overline{S}}^X)_*(i_{\overline{S}}^X)^*\F\otimes C^\bullet(S)\right) = (a_X)_!\left((i_{\overline{S}}^X)_*(i_{\overline{S}}^X)^*\F\otimes (a_X)^*C^\bullet(S)\right)\simeq (a_X)_!(i_{\overline{S}}^X)_*(i_{\overline{S}}^X)^*\F\otimes C^\bullet(S)$$ 
		and this equals $(a_{\overline{S}})_!(i_{\overline{S}}^X)^*\F\otimes C^\bullet(S)$ since $(a_X)_!(i_{\overline{S}}^X)_*=(a_X)_!(i_{\overline{S}}^X)_! = (a_{\overline{S}})_!$.
		\end{proof}
		
		\begin{rem}\label{rem: postnikov system relative cohomology}
		One can also apply the functor $(a_X)_*$ to the Postnikov system of Theorem \ref{thm:maintheorem} and get a Postnikov system expressing the relative motive of the pair $(X,Z)$ with coefficients in a motivic sheaf $\F$. It is a motivic refinement of the classical long exact sequence in relative cohomology.
		\end{rem}
		
	\subsection{Localization spectral sequences}\label{subsec: realizations}
	
	    We recover the spectral sequences of \cite{petersen} by applying realization functors.
	
	    \subsubsection{Betti realization}
	    
	        We now consider a finite type scheme $X$ over $\mathbb{C}$. We have the Betti realization functor \cite{ayoubbetti}
	        $$\DA_X \longrightarrow D(X^{\mathrm{an}})\ ,$$
	        whose target is the derived category of the category of sheaves of $\KM$-modules on the analytification $X^{\mathrm{an}}$. This functor is compatible with the operations $f^*$, $f_*$, $f_!$, $\otimes$, and we thus get from Theorem \ref{thm:maintheorem} (resp. Corollary \ref{coro: postnikov system base}) a Postnikov system in $D(X^{\mathrm{an}})$ (resp. in $D(B^{\mathrm{an}})$). We can then derive a spectral sequence by applying a cohomological functor such as the ``cohomology sheaves'' functor $\mathcal{H}^0:D(B^{\mathrm{an}})\to \operatorname{Sh}(B^{\mathrm{an}})$.
	        
	        \begin{rem}
	        We may also apply other natural cohomological functors when available. For instance, if the Betti realization of $\F$ is a complex of sheaves with constructible cohomology sheaves, almost all of which are zero (e.g., if $\F$ is a constant sheaf), then one can also apply the perverse cohomology functor ${}^{p}\mathrm{H}^0$ with target the category of perverse sheaves ${}^p\mathrm{Perv}(B^{\mathrm{an}})$ for any perversity function $p$ \cite{bbd}.
	        \end{rem}
	        
	        In the case $B=\operatorname{Spec}(\mathbb{C})$, the spectral sequence reads:
	        \[
	        E_1^{p,q} = \bigoplus_{\sigma(S)=p}
	        H^{p+q}\big(R\Gamma_c(i_{\overline{S}}^X)^*\F\otimes C^\bullet(S)\big)
	        \quad \Rightarrow \quad H^{p+q}_c(X_0,j^!\F)\ .
	        \]
	        We can make it more explicit under some extra assumptions as in \cite[\S 3]{petersen}, and we get for instance the following corollary \cite[Theorem 3.3 (ii)]{petersen}. We recall the notation $h^n(S)=H^n(C^\bullet(S))$ from \S\ref{subsec:def C}.
	    
	        \begin{coro}\label{coro: petersen}
	        Assume that $\KM$ is a hereditary ring (e.g., $\KM$ is a field or $\KM=\mathbb{Z}$) and that for every stratum $S$ and every integer $n$ the cohomology group $h^n(S)$ is a torsion-free $\KM$-module. Then we have a spectral sequence of $\KM$-modules:
	        \[
	        E_1^{p,q} = \bigoplus_{\substack{\sigma(S)=p\\i+j=p+q}}
	        H^{i}_c\big(\overline{S},(i_{\overline{S}}^X)^*\F\big) \otimes h^j(S)
	        \quad \Rightarrow \quad H^{p+q}_c(X_0,j^!\F)\ .
	        \]
	        \end{coro}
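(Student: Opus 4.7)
The plan is to extract a spectral sequence from the Postnikov system furnished by Corollary \ref{coro: postnikov system base} (taken with $B = \operatorname{Spec}(\mathbb{C})$) and then to simplify its $E_1$ page via a Künneth argument using the hypotheses on $\KM$ and on the $h^j(S)$.

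First, I would apply the Betti realization $\DA_B \to D(\KMMod{})$ to the Postnikov system of Corollary \ref{coro: postnikov system base}. Compatibility of Betti realization with $(a_{X_0})_!$, $j^!$, $(a_{\overline{S}})_!$, $(i_{\overline{S}}^X)^*$, and the tensor product produces a Postnikov system in $D(\KMMod{})$ whose total object is $R\Gamma_c(X_0, j^!\F)$ and whose graded piece in filtration degree $k$ is $\bigoplus_{\sigma(S)=k} R\Gamma_c(\overline{S}, (i_{\overline{S}}^X)^*\F) \otimes^{\L}_{\KM} C^\bullet(S)$. I would then apply the standard exact-couple construction with the cohomology functor $H^* : D(\KMMod{}) \to \KMMod{}$, yielding a convergent spectral sequence
\[
E_1^{p,q} = \bigoplus_{\sigma(S) = p} H^{p+q}\!\left(R\Gamma_c(\overline{S}, (i_{\overline{S}}^X)^*\F) \otimes^{\L}_{\KM} C^\bullet(S)\right) \;\Longrightarrow\; H_c^{p+q}(X_0, j^!\F),
\]
convergence being automatic since $\sigma$ takes only finitely many values on $\hat{P}$.

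The final step is to identify the graded pieces via Künneth. Since $\KM$ is hereditary, $\operatorname{Tor}^{\KM}_i = 0$ for $i \geq 2$, and the derived tensor product of two complexes of $\KM$-modules fits into a short exact sequence (the Künneth formula) expressing $H^n$ of the tensor product in terms of direct sums of $H^i(A) \otimes_{\KM} h^j(S)$ and $\operatorname{Tor}_1^{\KM}(H^i(A), h^j(S))$. The assumption that each $h^j(S)$ is torsion-free forces the $\operatorname{Tor}_1$ terms to vanish, since torsion-free modules over a hereditary commutative ring are flat, and this yields the clean formula stated in the corollary. The hard part will be this Künneth step: both the hereditary hypothesis (to cut off higher Tors) and the torsion-freeness of $h^j(S)$ (to kill $\operatorname{Tor}_1$) are essential, and one has to verify that the general Künneth short exact sequence is the correct one in this derived category of $\KM$-modules. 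The rest is formal, relying only on compatibility of Betti realization with the six functors and the standard mechanism producing a spectral sequence from a Postnikov system in a triangulated category.
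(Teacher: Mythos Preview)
Your proposal is correct and follows essentially the same route as the paper: pass the Postnikov system of Corollary \ref{coro: postnikov system base} through Betti realization to obtain the spectral sequence with $E_1^{p,q}=\bigoplus_{\sigma(S)=p}H^{p+q}\big(R\Gamma_c(\overline{S},(i_{\overline{S}}^X)^*\F)\otimes C^\bullet(S)\big)$, then simplify via K\"unneth. The only cosmetic difference is in the K\"unneth step: the paper first observes that $C^\bullet(S)$ is a bounded complex of free $\KM$-modules (so the tensor product is already derived), then uses that over a hereditary ring every complex is quasi-isomorphic to its cohomology, and finally that torsion-free $h^j(S)$ makes the tensor product exact; you instead invoke the K\"unneth short exact sequence and kill the $\operatorname{Tor}_1$ term directly---both arguments are standard and equivalent.
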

	        
	        \begin{proof}
	        Since $C^\bullet(S)$ is a complex of free $\KM$-modules, the tensor product by $C^\bullet(S)$ is also the derived tensor product. Moreover, since $\KM$ is hereditary, the complex $C^\bullet(S)$ is quasi-isomorphic to its cohomology. Finally, since that cohomology is assumed to be torsion-free, the K\"{u}nneth formula applies without the Tor correction term.
	        \end{proof}
	        
	        \begin{rem}\label{rem: coro petersen CM}
	        In the context of Remark \ref{rem: graded poset OS} we can simplify further since most cohomology groups $h^j(S)$ vanish: we get a spectral sequence
	        $$E_1^{p,q}= \bigoplus_{\rk(S)=p}H^{q}_c\big(\overline{S},(i_{\overline{S}}^X)^*\F\big) \otimes h(S)^\vee \quad \Rightarrow \quad H^{p+q}_c(X_0,j^!\F)\ .$$
	        The differential $d_1^{p,q}$ has component indexed by strata $S$ and $T$, with $\rk(S)=p$, $\rk(T)=p+1$, given by
	        $$\xymatrixcolsep{4pc}\xymatrix{ H^{q}_c\big(\overline{S},(i_{\overline{S}}^X)^*\F\big) \otimes h(S)^\vee \ar[r]^{\rho_S^T\F\otimes b_S^T} & H^{q}_c\big(\overline{T},(i_{\overline{T}}^X)^*\F\big) \otimes h(T)^\vee }$$
	        if $S<T$, and zero otherwise.
	        \end{rem}
	        
	   \subsubsection{Hodge realization}
	        
	        In the case $\KM=\mathbb{Q}$, the Betti realization functor can be enriched into a Hodge realization functor in the constructible case. Following \cite[Definition 2.11]{ayoubguide} we define $\DA^{\ct}_X$ to be the smallest triangulated subcategory of $\DA_X$ stable under direct summands and Tate twists and containing the motives $f_*\KM_Y$ for $f:Y\to X$ of finite presentation. Objects of $\DA^{\ct}_X$ are called \emph{constructible}.
	        
	        Thanks to \cite{ivorrarealization} we have Hodge realization functors
	        $$\DA^{\mathrm{ct}}_X \longrightarrow D^b(\operatorname{MHM}(X))$$
	        which are compatible with the six functor formalism, where $\operatorname{MHM}(X)$ is Saito's category of mixed Hodge modules on $X$ \cite{saitoMHM}.  This proves that the spectral sequence of Corollary \ref{coro: petersen} is compatible with mixed Hodge structures if $X$ has finite type over $\operatorname{Spec}(\mathbb{C})$ and $\F$ is constructible, e.g., $\F=\mathbb{Q}_X$ the constant sheaf. This was already noted by Petersen \cite[Theorem 3.3 (ii)]{petersen}.
	    
	\subsubsection{\'{E}tale (and $\ell$-adic) realization}
	    
	    Let us assume that $B=\operatorname{Spec}(k)$ for some field $k$. We fix a prime $\ell$ invertible in $k$ and set $\KM=\QM_\ell$. By \cite[\S 5 and \S 9]{ayoubetale} and \cite[\S 7.2]{cisinskidegliseetale}, we have an \'{e}tale (or $\ell$-adic) realization functor
	    \[
	    \DA^{\mathrm{ct}}_X \longrightarrow D^b_c(X^{\text{\'{e}t}})
	    \]
	    compatible with the six operations, where $D^b_c(X^{\text{\'{e}t}})$ is Ekedahl's triangulated category of $\ell$-adic systems \cite{ekedahl}.
	    
	    This implies that we have a spectral sequence in \'{e}tale cohomology analogous to that of Corollary \ref{coro: petersen} with $\QM_\ell$ coefficients, with values in the category of continuous representations of the Galois group $\operatorname{Gal}(k^\mathrm{sep}/k)$. This was already noted by Petersen \cite[Theorem 3.3 (ii)]{petersen}.

	\subsection{The dual version}\label{subsec: dual version}
	    
	    We start with the ``dual'' variant of Theorem \ref{thm:maintheorem}, where we consider the same geometric situation but study the object $j_*j^*\F$ instead of $j_!j^!\F$. We will derive one from the other by using Verdier duality in the motivic setting (see Remark \ref{rem: duality} below for a discussion of this strategy). 
	    
	    For simplicity we assume that the base scheme $B$ is of finite type over a characteristic zero field. Then we have a Verdier duality functor \cite[Theorem 3.10]{ayoubguide}
	    $$\D_X:(\DA^{\ct}_X)^{\mathrm{op}}\longrightarrow \DA^{\ct}_X$$
	    which satisfies the usual compatibilities $\D_X\circ \D_X\simeq \id$ and $\D_Y\circ f_*\simeq f_!\circ \D_X$ for $f:X\to Y$ a morphism of schemes.
	    
	    Recall from \S\S\ref{subsec:def C} and \ref{subsec: connecting morphisms} the homological complexes $C_\bullet(S)$, for $S\in P$, that we now treat with cohomological conventions (i.e., with negative cohomological degrees) and the connecting morphisms $b_S^T:C_{\bullet+1}(T)\to C_\bullet(T)$ for $S\lessdot T$, which in cohomological conventions read: $b_S^T:C_\bullet(T)\to C_\bullet(S)[1]$. As in the previous paragraph we set $C_\bullet(X_0)=\KM$ concentrated in degree $0$, and for $S\in P$ a minimal element, we consider the natural (iso)morphism $b_{X_0}^S:C_\bullet(S)\to C_\bullet(X_0)[1]$.
	    
	    In the statement of the next theorem we will use the following ``Gysin-type'' morphisms of functors, which are dual to restriction morphisms $\rho_S^T$ (for strata $S\leq T$):
	    \begin{equation}\label{eq: gysin}
	    \gamma_S^T:
	    (i_{\overline{T}}^X)_!(i_{\overline{T}}^X)^! \simeq (i_{\overline{S}}^X)_!(i_{\overline{T}}^{\overline{S}})_!
	    (i_{\overline{T}}^{\overline{S}})^!(i_{\overline{S}}^X)^! \longrightarrow (i_{\overline{S}}^X)_!(i_{\overline{S}}^X)^!\ .
	    \end{equation}
	    
	    \begin{thm}\label{thm: main theorem dual}
	    Let $\mathcal{F}\in \DA^{\ct}_X$ be a constructible object and let us set $\G=j_*j^*\mathcal{F}$.
		\begin{enumerate}[1)]
		\item There is a Postnikov system in $\DA_X$:
		$$\xymatrix{
		\G=F_0\G \ar[rr] && F_1\G \ar[dl]^{+1}\ar[rr] && F_2\G \ar[dl]^{+1}\ar[rr] && \ar[dl]^{+1} & \cdots \\
		& G_0\G \ar[ul] && G_1\G \ar[ul]&& G_2\G \ar[ul]&&&
		}$$
		where the graded objects are given by
		$$G_k\G = \bigoplus_{\sigma(S)=k} (i_{\overline{S}}^X)_!(i_{\overline{S}}^X)^!\F \otimes C_\bullet(S) \ .$$
		\item  For every integer $k$, the connecting morphism $G_{k+1}\G\rightarrow G_k\G[1]$ has its component indexed by $S$ and $T$ with $\sigma(S)=k$, $\sigma(T)=k+1$, given by
		$$\xymatrixcolsep{4pc}\xymatrix{
		(i_{\overline{T}}^X)_!(i_{\overline{T}}^X)^!\F\otimes C_\bullet(T)
		 \ar[r]^-{\gamma_S^T\F\otimes b_S^T} &  (i_{\overline{S}}^X)_!(i_{\overline{S}}^X)^!\F \otimes C_\bullet(S)[1]
		}$$
		if $S<T$, and zero otherwise.
		\item The above Postnikov system is functorial in $\F$.
		\end{enumerate}
	    \end{thm}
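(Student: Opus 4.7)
The plan is to deduce Theorem \ref{thm: main theorem dual} from Theorem \ref{thm:maintheorem} by applying the Verdier duality functor $\D_X$. Setting $\F' := \D_X \F$, which is again constructible, Theorem \ref{thm:maintheorem} furnishes a functorial Postnikov system for $j_! j^! \F'$ whose graded pieces are $(i_{\overline{S}}^X)_*(i_{\overline{S}}^X)^* \F' \otimes C^\bullet(S)$ and whose connecting morphisms are $\rho_S^T \F' \otimes b_S^T$. A preliminary step is to check that this entire Postnikov system sits inside $\DA^{\ct}_X$: this follows from the stability of the constructible subcategory under the six operations applied to the finite-type morphisms at hand, together with the fact that each $C^\bullet(S)$ is a bounded complex of finite-rank free $\KM$-modules, hence dualizable in $\DA_X$ with dual $C_\bullet(S)$.

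I would then apply the contravariant triangulated functor $\D_X$ to this Postnikov system. This reverses the horizontal arrows and produces a Postnikov system of exactly the shape predicted by Theorem \ref{thm: main theorem dual}. Using $\D_X \D_X \simeq \id$ along with the standard intertwinings $\D_X j_! \simeq j_* \D_{X_0}$, $j^! \D_X \simeq \D_{X_0} j^*$, and the analogous identities for the closed immersions $i_{\overline{S}}^X$, one identifies $\D_X(j_! j^! \F')$ with $j_* j^* \F$, and each graded piece
\[
\D_X\bigl((i_{\overline{S}}^X)_*(i_{\overline{S}}^X)^* \F' \otimes C^\bullet(S)\bigr) \;\simeq\; (i_{\overline{S}}^X)_!(i_{\overline{S}}^X)^! \F \otimes C_\bullet(S),
\]
using that Verdier duality commutes with tensoring by a dualizable object and that the dual of $C^\bullet(S)$ is $C_\bullet(S)$.

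For the connecting morphisms, the essential point is that Verdier duality exchanges the unit of the adjunction $\bigl((i_{\overline{T}}^{\overline{S}})^*, (i_{\overline{T}}^{\overline{S}})_*\bigr)$ with the counit of $\bigl((i_{\overline{T}}^{\overline{S}})_!, (i_{\overline{T}}^{\overline{S}})^!\bigr)$, which carries the restriction morphism $\rho_S^T \F'$ of \eqref{eq: res} to the Gysin-type morphism $\gamma_S^T \F$ of \eqref{eq: gysin} under the identifications above; simultaneously, the $\KM$-linear dual of $b_S^T : C^\bullet(S) \to C^\bullet(T)[1]$ is the homological map $b_S^T : C_\bullet(T) \to C_\bullet(S)[1]$ of Lemma \ref{lem: b morphism homology} (the conventions in \S\ref{subsec: connecting morphisms} are arranged so that the two halves bear the same name). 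Functoriality in $\F$ is then inherited from the functoriality of $\D_X$ and of Theorem \ref{thm:maintheorem}.

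The main obstacle is the one flagged by the authors in the introduction: the very availability of Verdier duality at the motivic level, which forces the running hypothesis that $B$ be of finite type over a characteristic zero field and restricts us to the constructible subcategory. Beyond this conceptual restriction, the principal technical effort is careful bookkeeping: one must verify constructibility at each intermediate step so that $\D_X$ may legitimately be applied, and track the duality of the connecting morphisms precisely enough to obtain $\gamma_S^T \F \otimes b_S^T$ \emph{on the nose} (signs included) rather than merely up to an unspecified automorphism of the graded pieces.
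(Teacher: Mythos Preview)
Your proposal is correct and follows essentially the same strategy as the paper: apply Theorem \ref{thm:maintheorem} to $\D_X\F$, then dualize the resulting Postnikov system, identifying the graded pieces via the strong dualizability of $C^\bullet(S)$ and the standard intertwinings of $\D_X$ with $j_!/j_*$ and $i_*/i_!$, and observing that $\rho_S^T$ and $\gamma_S^T$ are exchanged under Verdier duality. The paper's proof is slightly terser (it does not spell out the constructibility check or the sign bookkeeping you mention), but the argument is the same.
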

	    
	    \begin{proof}
	    We apply Theorem \ref{thm:maintheorem} to the Verdier dual of $\F$ and dualize the Postnikov system obtained in this way. The only thing that needs to be checked is the description of $G_k\G$ and the connecting morphisms. Let $\omega_X\in \DA_X^{\ct}$ denote the dualizing object. For any object $\mathcal{U}\in \DA^{\ct}_X$ we have:
	    $$\D_X(\mathcal{U}\otimes C^\bullet(S)) = \underline{\Hom}_{\DA^{\ct}_X}(C^\bullet(S)\otimes \mathcal{U},\omega_X) \simeq \underline{\Hom}_{\DA^{\ct}_X}(C^\bullet(S),\D_X\mathcal{U})\simeq \D_X\mathcal{U}\otimes C_\bullet(S)\ .$$
	    In the last step we have used the fact that $C_\bullet(S)$ is the strong dual of $C^\bullet(S)$ in the monoidal category $\DM_{\KMMod}$ because it is a bounded complex of free $\KM$-modules of finite rank. By applying this to $\mathcal{U}=(i_{\overline{S}}^X)_*(i_{\overline{S}}^X)^*\D_X\F$, using the compatibility between Verdier duality and the functors $i_*$ and $i_!$, and the fact that $\D_X\circ \D_X\F\simeq \F$, we get an isomorphism: 
	    $$\D_X\left((i_{\overline{S}}^X)_*(i_{\overline{S}}^X)^*\D_X\F\otimes C^\bullet(S)\right)\simeq (i_{\overline{S}}^X)_!(i_{\overline{S}}^X)^!\F\otimes C_\bullet(S)\ .$$
	    This implies the description of $G_k\G$ as in the statement of the theorem. The fact that the Gysin morphisms $\gamma_S^T$ defined in \eqref{eq: gysin} and the restriction morphisms $\rho_S^T$ defined in \eqref{eq: res} are Verdier dual to each other is clear, and the claim follows.
	    \end{proof}
	    
	    \begin{rem}\label{rem: duality}
	    Theorem \ref{thm: main theorem dual} is most certainly true without the assumption that $\F$ is constructible and without the assumption that $B$ is a finite type scheme over a characteristic zero field. In fact, as noted in the introduction, we can prove it without the functoriality statement using only the language of triangulated categories. However, it seems that the tools that we are using do not allow us to do it functorially. Indeed, we cannot simply repeat the proof of Theorem \ref{thm:maintheorem} since the existence of a left adjoint to the functor $s^*$ appearing in the proof is not guaranteed in the context of an algebraic derivator.
	    \end{rem}
	    
	    \begin{rem}
	    As in Corollary \ref{coro: postnikov system base} and Remark \ref{rem: postnikov system relative cohomology} one may apply the functors $(a_X)_*$ or $(a_X)_!$ to the Postnikov system of Theorem \ref{thm: main theorem dual} to get localization Postnikov systems in $\DA_B$. In the case of $(a_X)_*$ this computes $(a_{X_0})_*j^*\F$, the cohomology of $X_0$ with coefficients in the restriction of $\F$; a particularly interesting case is when $\F=\KM_X$ is a constant motivic sheaf. There the main difficulty is to be able to compute the graded objects of the Postnikov system, i.e., the objects $(a_{\overline{S}})_*(i_{\overline{S}}^X)^!\KM_X$ for all strata $S$. Luckily, if $\overline{S}$ is smooth of codimension $c$ in $X$, then by purity we have an isomorphism
	    $$(i_{\overline{S}}^X)^!\KM_X \simeq \KM_{\overline{S}}[-2c](-c)\ ,$$
	    and the localization Postnikov system is expressed in terms of the motives of the closures of strata. 
	    \end{rem}
	    
	    \begin{rem}
	    By applying realization functors and cohomological functors one gets spectral sequences from Theorem \ref{thm: main theorem dual} as in \S \ref{subsec: realizations}. We only state one special case that is important for applications. Let $\F=\KM_X$, and assume that we are in the context of Corollary \ref{coro: petersen} and Remark \ref{rem: coro petersen CM}. Further assume that for every stratum $S$ the closure $\overline{S}$ is smooth of codimension $c_S$ in $X$. Then we get by the previous remark a (second quadrant) spectral sequence in mixed Hodge structures:
	    \begin{equation}\label{eq: spectral sequence classical}
	    E_1^{-p,q} = \bigoplus_{\rk(S)=p}H^{q-2c_S}(\overline{S})(-c_S)\otimes h(S) \quad \Rightarrow \quad H^{-p+q}(X_0)\ .
	    \end{equation}
	    A special case of interest is when the stratification is induced by a normal crossing divisor, in which case $c_S=\rk(S)$ and $h(S)$ has rank one for every stratum $S$; one then recovers Deligne's spectral sequence \cite[3.2.4.1]{delignehodgeII}. The other classical spectral sequences cited in the introduction \cite{cohentaylor, goreskymacpherson, looijenga, kriz, totaro, bjornerekedahl, getzler, dupontOS, bibby} are all special cases of \eqref{eq: spectral sequence classical}.
	    \end{rem}
	
    \subsection{Functoriality}
    
        We now turn to the functoriality of our main theorem with respect to morphisms of schemes. With a little more work it should be easy to treat more general cases.
    
        \subsubsection{A category of stratified schemes}
        
            For simplicity we restrict to morphisms between stratified schemes whose underlying combinatorial datum is an isomorphism of posets.
        
            \begin{defi}
            Let $X$ and $X'$ be two stratified schemes with posets of strata $\hat{P}$ and $\hat{P}'$ as in \S\ref{sec: main theorem}. A \emph{stratified morphism} from $X$ to $X'$ is a pair $(\alpha,f)$ where $\alpha:\hat{P}\to \hat{P}'$ is an isomorphism of posets and $f:X\to X'$ is a morphism of schemes such that 
            $$\forall S\in \hat{P}\; , \; f(\overline{S})\subset \overline{\alpha(S)}\ .$$
            \end{defi}
            
            Note that for a stratified morphism $(\alpha, f)$, the morphism $f$ does not determine $\alpha$ in general. However, for an isomorphism of schemes $f:X\to X'$ such that the image by $f$ of every stratum of $X$ is a stratum of $X'$, there is a unique $\alpha:\hat{P}\to \hat{P}'$ such that $(\alpha,f)$ is a stratified isomorphism.
            
            Our notion of stratified morphism is more easily understood in the context of the category of diagrams of schemes. For a stratified scheme $X$ with poset of strata $\hat{P}$ we have a natural diagram of schemes $(\hat{P},\mathcal{X})$ where $\mathcal{X}:\hat{P}^{\mathrm{op}}\to \Sch$ sends $S$ to $\overline{S}$. A stratified morphism $(\alpha,f)$ as above gives rise to a morphism of diagrams of schemes
            $$(\alpha,f):(\hat{P},\mathcal{X})\longrightarrow (\hat{P}',\mathcal{X}')\ .$$
            One can thus view our category of stratified schemes as a subcategory of the category of diagrams of schemes. It is not a full subcategory since we only consider morphisms $(\alpha,f)$ for which $\alpha$ is an isomorphism of posets.
            
        \subsubsection{Functoriality of the localization triangle}\label{subsubsec: functoriality localization triangle}
        
        The first step in the construction of the Postnikov system is just the localization triangle \eqref{eq: localization triangle geometric}. So let us consider a morphism of pairs $f:(X,Z)\to (X',Z')$, where $Z$ and $Z'$ are closed subschemes and $f(Z) \subset Z'$. If we denote by $X_0$ and $X'_0$ the open complements, then $f^{-1}(X'_0) \subset X_0$. We have the following diagram, where the left square is commutative and the rectangle on the right is cartesian.
           \[
	        \xymatrix{
	        Z \ar[r]^{i} \ar[d]_f &
	        X \ar[d]_f &
	        X_0 \ar[l]_j &
	        f^{-1}(X'_0) \ar[l]_(.6){j_0} \ar[d]^f \\
	        Z' \ar[r]_{i'}&
	        X' &&
	        X'_0 \ar[ll]^{j'} 
	        }
	        \]
	        
	        Given an object $\F'\in \DA_{X'}$, we want to define a morphism between the localization triangle for $\F'$ and $f_*$ of the localization triangle for $f^*\F'$:
	        
	        \[
	        \xymatrix{
	        (i')_*(i')^*\F'[-1]\ar[r]\ar[d] &
	        (j')_!(j')^!\F'\ar[r]\ar[d] &
	        \F'\ar[r]^(.7){+1}\ar[d] & \\
	        f_*i_*i^*f^*\F'[-1]\ar[r] &
	        f_*j_!j^!f^*\F'\ar[r] &
	        f_*f^*\F'\ar[r]^(.7){+1} & \\
	        }
	        \]
	        
	        Let us now define the three vertical morphisms:
	        
	        \begin{itemize}
	        \item 
	        The right morphism is of course the adjunction unit $\F' \to f_*f^*\F'$.
	        
	        \item
	        The left morphism is given by the composition:
	        \[
	        (i')_*(i')^*\F'[-1] \longrightarrow
	        (i')_*f_*f^*(i')^*\F'[-1] \stackrel\sim\longrightarrow
	        f_*i_*i^*f^*\F'[-1]
	        \]
	        where the first arrow is induced by the adjunction unit, and the isomorphism on the right follows from the commutativity of the left square in the diagram above.
	        
	        \item
	        The middle morphism is given by the composition:
	        \[
	        (j')_!(j')^!\F' \longrightarrow
	        (j')_!f_*f^*(j')^!\F' \longrightarrow
	        f_*j_!(j_0)_!(j_0)^!j^!f^*\F' \longrightarrow
	        f_*j_!j^!f^*\F'
	        \]
	        where the first arrow is induced by the adjunction unit, the second arrow induced by two exchange morphisms (which are part of the cross functor structure, see \cite[\S 1.2]{ayoubPhD1}) for the cartesian square on the right of the diagram above, and the third arrow is induced by the adjunction counit.
	        \end{itemize}
	        
	        We leave it to the reader to check that this defines indeed a morphism of triangles. The commutativity of the left square is easy, the commutativity of the right square is a nice exercise on using the axioms of a cross functor, and the commutativity of the third square follows from \cite[Proposition 1.1.9]{bbd}.
	        
	        \begin{rem}
	        Assume that $B=\operatorname{Spec}(\mathbb{C})$ and denote by $a: X \to B$ and $a':X'\to B$ the structure morphisms. If $f$ is proper, we have $a'_!f_* = a'_!f_! = a_!$. Consequently, taking $\F' = \mathbb{Q}_{X'}$, applying the functor $a'_!$ and taking the Betti realization, we get the functoriality (for proper morphisms) of the localization long exact sequence of the introduction:
	        \[
	       \xymatrix{
	       \cdots \ar[r] &
	       H^\bullet_c(X'_0) \ar[r]\ar[d] &
	       H^\bullet_c(X') \ar[r]\ar[d] &
	       H^\bullet_c(Z') \ar[r]\ar[d] &
	       H^{\bullet+1}_c(X'_0)  \ar[r]\ar[d] &
	       \cdots\\
	       \cdots \ar[r] &
	       H^\bullet_c(X_0) \ar[r] &
	       H^\bullet_c(X) \ar[r] &
	       H^\bullet_c(Z) \ar[r] &
	       H^{\bullet+1}_c(X_0)  \ar[r] &
	       \cdots
	       }
           \]
	        
	        Similarly, using $a'_*$ instead, we get the functoriality of the long exact sequence in relative cohomology:
	       \[
	       \xymatrix{
	       \cdots \ar[r] &
	       H^\bullet(X',Z') \ar[r]\ar[d] &
	       H^\bullet(X') \ar[r]\ar[d] &
	       H^\bullet(Z') \ar[r]\ar[d] &
	       H^{\bullet+1}(X',Z')  \ar[r]\ar[d] &
	       \cdots\\
	       \cdots \ar[r] &
	       H^\bullet(X,Z) \ar[r] &
	       H^\bullet(X) \ar[r] &
	       H^\bullet(Z) \ar[r] &
	       H^{\bullet+1}(X,Z)  \ar[r] &
	       \cdots
	       }
           \]
	       In this case we do not need to assume that $f$ is proper: we always have $a'_*f_* = a_*$.
	       \end{rem}
            
        \subsubsection{Functoriality of the localization spectral sequence}
        
            To express the functoriality of Theorem \ref{thm:maintheorem} with respect to stratified morphisms, we adopt a more meaningful notation.
            \begin{enumerate}[$\bullet$]
            \item For an object $\mathcal{H}\in \DA_X(P)$ we denote by $\widetilde{\Pi}(\mathcal{H})$ the Postnikov system in $\DA_X$ described in Proposition \ref{prop: pushed}.
            \item For an object $\F\in \DA_X$ we denote by $\Pi(\hat{P},X;\F)$ the Postnikov system in $\DA_X$ described in Theorem \ref{thm:maintheorem}.
            \end{enumerate}
            Borrowing notation from the proof of Theorem \ref{thm:maintheorem} we have that $\Pi(\hat{P},X;\F)$ is obtained by appending $\widetilde{\Pi}(r_*r^*p^*\F)[-1]$ to the first (localization) triangle.

            We start with a general lemma explaining the compatibility between the Postnikov systems $\widetilde{\Pi}$ and certain pushforwards. We recall (see Remark \ref{rem: functoriality complexes C}) that an isomorphism of posets $\alpha:P\to P'$ induces isomorphisms of complexes denoted
            $$C^\bullet(\alpha):C^\bullet_{P'}(S')\to C^\bullet_{P}(S)$$ 
            for elements $S\in P$ and $S'\in P'$ such that $S'=\alpha(S)$. If $\sigma:\hat{P}\to \mathbb{Z}$ is a strictly increasing map such that $\sigma(\hat{0})=0$ and if $\alpha:\hat{P}\to \hat{P}'$ is an isomorphism of posets then we denote by $\sigma':\hat{P}'\to \mathbb{Z}$ the composite $\sigma'=\sigma\circ \alpha^{-1}$. In the next lemma, for $\mathcal{H}\in \DA_X(P)$ and $S\in P$ we denote by $\mathcal{H}_S\in \DA_X$ the value of $\mathcal{H}$ at $S$.
            
            \begin{lem}\label{lem: functoriality technical}
            Let $\alpha:P\to P'$ be an isomorphism of posets, let $f:X\to X'$ be a morphism of schemes, and let us denote by $(\alpha,f):(P,X)\to (P',X')$ the corresponding morphism of (constant) diagrams of schemes. For $\mathcal{H}\in \DA_X(P)$ we have an isomorphism:
            $$\widetilde{\Pi}((\alpha,f)_*\mathcal{H}) \stackrel{\sim}{\longrightarrow} f_*\widetilde{\Pi}(\mathcal{H})\ .$$
            At the level of graded objects it reads
            $$\bigoplus_{\sigma(S')=k}f_*\mathcal{H}_{\alpha^{-1}(S')}\otimes C_{P'}^{\bullet+1}(S') \stackrel{\sim}{\longrightarrow} \bigoplus_{\sigma(S)=k}f_*(\mathcal{H}_S\otimes C_P^{\bullet+1}(S))\simeq \bigoplus_{\sigma(S)=k}f_*\mathcal{H}_S\otimes C_P^{\bullet+1}(S)$$
            and its component indexed by $S'$ and $S$ is given by $\id\otimes C^{\bullet+1}(\alpha)$ if $S=\alpha(S')$ and zero otherwise.
            \end{lem}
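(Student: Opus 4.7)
The plan is to factor the morphism of diagrams of schemes $(\alpha, f)\colon (P, X) \to (P', X')$ as
\[(P, X) \xrightarrow{(\id_P, f)} (P, X') \xrightarrow{(\alpha, \id_{X'})} (P', X')\]
and to establish the claimed isomorphism by composing a ``geometric'' isomorphism $\widetilde{\Pi}((\id_P, f)_* \mathcal{H}) \simeq f_* \widetilde{\Pi}(\mathcal{H})$ in $\DA_{X'}$ with a ``combinatorial'' isomorphism $\widetilde{\Pi}((\alpha, \id_{X'})_* \mathcal{K}) \simeq \widetilde{\Pi}(\mathcal{K})$ in $\DA_{X'}$, applied to $\mathcal{K} = (\id_P, f)_* \mathcal{H}$.

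For the geometric step, I would exploit the fact that the Postnikov system $\widetilde{\Pi}(\mathcal{H})$ is obtained by applying $p_*$ to the Postnikov system of Lemma \ref{lem:postnikovsystemderivator}, whose constituents involve only the derivator operations $(v^k)^*$, $(v^k)_!$, and the tensor factor $p^* \KM_x$. By the axioms of an algebraic derivator \cite[\S 2.4]{ayoubPhD1} and standard base change applied to the obvious cartesian squares of diagrams of schemes, the scheme-level pushforward $f_*$, lifted to the functors $(\id_Q, f)_*$ for every finite poset $Q$, commutes with $p_*$, with the pullbacks $(i_x)^*$ and $(v^k)^*$, and with the Kan extensions $(v^k)_!$. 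Its compatibility with the tensor factor $p^* \KM_x$ is an instance of the projection formula \eqref{eq: projection formula star}, which is an isomorphism in this situation because $\KM_x$ is a dualizable coefficient (a bounded complex of free $\KM$-modules of finite rank). Running through the construction produces the geometric isomorphism and identifies the graded pieces as $f_*(\mathcal{H}_S \otimes C^{\bullet+1}_P(S)) \simeq f_* \mathcal{H}_S \otimes C^{\bullet+1}_P(S)$.

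For the combinatorial step, the desired isomorphism is precisely the content of Remark \ref{rem: functoriality combinatorial postnikov system} applied in the derivator $\DA_{X'}$; since $\alpha$ is an isomorphism, $(\alpha, \id_{X'})_*$ is inverse to $\alpha^*$, and this step contributes the factor $\id \otimes C^{\bullet+1}(\alpha)$ on graded pieces. Composing the two isomorphisms and tracking the reindexing $S \leftrightarrow S' = \alpha(S)$ between the cosieves $V^k \subset P$ and $V'^{\,k} \subset P'$ would yield the formula on components stated in the lemma.

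The main obstacle will be to ensure that the various natural isomorphisms on graded pieces assemble into a genuine isomorphism of Postnikov systems, i.e., are compatible with both the horizontal morphisms $F^{k+1} \to F^k$ and the connecting morphisms $G^k \to G^{k+1}[1]$. This reduces, via Lemma \ref{lem: projection formula sieve cosieve} and the $2$-functoriality of the algebraic derivator, to the functoriality of the localization triangles \eqref{eq: localization triangle derivator} of Lemma \ref{lem:localizationtrianglederivator} under the combinatorial operations and under $f_*$, which ultimately follows from the same base change considerations as in the geometric step.
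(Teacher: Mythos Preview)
Your proposal is correct and follows essentially the same approach as the paper. Both arguments factor the morphism $(\alpha,f)$ into a purely combinatorial piece and a purely geometric piece and handle them separately; the paper uses the opposite order $(\alpha,f)=(\id,f)\circ(\alpha,\id)$, but this is immaterial. For the combinatorial step you both invoke Remark~\ref{rem: functoriality combinatorial postnikov system}, and for the geometric step the paper simply says that $(\id,f)_*:\DA_X\to\DA_{X'}$ is a morphism of derivators, which is the packaged form of the commutation of $f_*$ with $p_*$, $(v^k)^*$, $(v^k)_!$ and the coefficient tensor that you unpack explicitly.
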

            
            \begin{proof}
            Since $(\alpha,f)=(\id,f)\circ(\alpha,\id)$ it is enough to do the proof in the case $\alpha=\id$ and in the case $f=\id$. In the former case it follows from the fact that $(\id,f)_*:\DA_X\to \DA_{X'}$ is a morphism of derivators. In the latter case it is the content of Remark \ref{rem: functoriality combinatorial postnikov system}.
            \end{proof}
            
            In the statement of the next theorem we will use the following ``pullback'' morphisms of functors in the context of a morphism of schemes $f:X\to X'$ and two strata $S$ and $S'$ such that $f(\overline{S})\subset \overline{S'}$, where $f_{\overline{S}}^{\overline{S'}}:\overline{S}\to\overline{S'}$ denotes the morphism induced by $f$:
            $$\eta_S^{S'}(f):(i_{\overline{S'}}^{X'})_*(i_{\overline{S'}}^{X'})_* \longrightarrow (i_{\overline{S'}}^{X'})_*(f_{\overline{S}}^{\overline{S'}})_*(f_{\overline{S}}^{\overline{S'}})^*(i_{\overline{S'}}^{X'})^* \simeq f_*(i_{\overline{S}}^X)_*(i_{\overline{S}}^X)^*f^*\ .$$

            \begin{thm}\label{thm: functoriality}
            \phantom{}
            \begin{enumerate}[1)]
            \item The Postnikov system of Theorem \ref{thm:maintheorem} is functorial with respect to stratified morphisms. More precisely, for every morphism $(\alpha,f):(\hat{P},X)\to (\hat{P}',X')$ and every object $\F'\in \DA_{X'}$ we have a morphism of Postnikov systems
            $$\Pi(\alpha,f;\F'):\Pi(\hat{P}',X';\F') \longrightarrow f_*\Pi(\hat{P},X;f^*\F')\ .$$
            They satisfy $\Pi(\id,\id;\F')=\id$ and the equality
            $$\Pi(\beta\circ\alpha,g\circ f;\F'') = g_*\Pi(\alpha,f;g^*\F'') \circ \Pi(\beta,g;\F'')\ .$$
            for composable morphisms
            $$(\hat{P},X)\stackrel{(\alpha,f)}{\longrightarrow} (\hat{P}',X') \stackrel{(\beta,g)}{\longrightarrow} (\hat{P}'',X'')$$
            and $\F''\in \DA_{X''}$.
            \item For every integer $k$, the morphism $\Pi(\alpha,f;\F')$ reads, at the level of graded objects:
            $$\bigoplus_{\sigma'(S')=k}(i_{\overline{S'}}^{X'})_*(i_{\overline{S'}}^{X'})^*\F' \otimes C^\bullet_{P'}(S') \longrightarrow \bigoplus_{\sigma(S)=k} f_*(i_{\overline{S}}^X)_*(i_{\overline{S}}^X)^*f^*\F'\otimes C^\bullet_P(S)$$
            and has its component indexed by $S'$ and $S$ given by $\eta_S^{S'}(f)\F'\otimes C^\bullet(\alpha)$ if $S'=\alpha(S)$ and zero otherwise.
            \item The morphism $\Pi(\alpha,f;\F')$ is functorial in $\F'$.
            \end{enumerate}
            \end{thm}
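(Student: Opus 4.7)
The plan is to construct the morphism $\Pi(\alpha,f;\F')$ piece by piece, mirroring the two-step construction of $\Pi(\hat{P},X;\F)$ in the proof of Theorem \ref{thm:maintheorem}. Recall that the first triangle is the rotated localization triangle $i_*i^*\F[-1]\to j_!j^!\F\to \F$, and the remainder equals $\widetilde{\Pi}(\mathcal{H})[-1]$ for $\mathcal{H}=r_*r^*p^*\F\in\DA_X(P)$. I would produce a morphism between the first triangles using the analysis of \S\ref{subsubsec: functoriality localization triangle} applied to the morphism of pairs $f:(X,Z)\to(X',Z')$ induced by $(\alpha,f)$, and a morphism between the remainders by combining a natural map $\mathcal{H}'\to (\alpha,f)_*\mathcal{H}$ in $\DA_{X'}(P')$ with Lemma \ref{lem: functoriality technical}.

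The first observation is that, because $f(\overline{S})\subset\overline{\alpha(S)}$ for all $S$, the stratified morphism lifts to a morphism of diagrams of schemes $(\alpha,f):(P,\mathcal{Z})\to(P',\mathcal{Z}')$. Combined with $(\alpha,f):(P,X)\to(P',X')$, this fits into a commutative square of diagrams of schemes involving $r,r',s,s',p,p'$. Applying $\DA$ and using $2$-functoriality together with the adjunction unit, one obtains a canonical morphism $\mathcal{H}'=(r')_*(r')^*(p')^*\F' \to (\alpha,f)_* r_*r^*p^*f^*\F' = (\alpha,f)_*\mathcal{H}$. Applying $\widetilde{\Pi}$ and composing with the isomorphism $\widetilde{\Pi}((\alpha,f)_*\mathcal{H})\simeq f_*\widetilde{\Pi}(\mathcal{H})$ of Lemma \ref{lem: functoriality technical}, I obtain the desired $\widetilde{\Pi}(\mathcal{H}')\to f_*\widetilde{\Pi}(\mathcal{H})$. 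The compatibility of this with the morphism of first triangles amounts to identifying, via the Ayoub--Zucker isomorphism $i_*i^*\F\simeq p_*\mathcal{H}$ used in the proof of Theorem \ref{thm:maintheorem}, the leftmost vertical arrow of \S\ref{subsubsec: functoriality localization triangle} with $(p')_*$ applied to $\mathcal{H}'\to(\alpha,f)_*\mathcal{H}$.

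For part 2), the description on graded objects follows by combining two pieces of data. Lemma \ref{lem: functoriality technical} already identifies the $C^\bullet(\alpha)$ factor and forces the component indexed by $S'$ and $S$ to vanish unless $S'=\alpha(S)$. On the remaining components, restricting $\mathcal{H}'\to (\alpha,f)_*\mathcal{H}$ to the component at $S'=\alpha(S)$ unravels, via the axiom DerAlg 3d and the definition of pushforward along $(\alpha,f)$ in an algebraic derivator, exactly to the pullback morphism $\eta_S^{S'}(f)\F'$ as written in the statement. Part 3) is automatic: every ingredient in the construction (adjunction units, pushforward along $(\alpha,f)$, the functor $\widetilde{\Pi}$) is functorial in its input.

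The main obstacle will be the composition law in part 1). I would split it into the two special cases $(\alpha,\id)$ and $(\id,f)$ and check them separately before combining. The combinatorial contribution is controlled by the contravariant $2$-functoriality $C^\bullet(\beta\circ\alpha)=C^\bullet(\alpha)\circ C^\bullet(\beta)$ of Remark \ref{rem: functoriality complexes C}, propagated through Remark \ref{rem: functoriality combinatorial postnikov system}. The scheme-theoretic contribution reduces to the compatibility of adjunction units for $f^*\dashv f_*$ and $g^*\dashv g_*$ with the exchange morphisms of the cross functor on $\DA$. Bundling these compatibilities at the level of diagrams of schemes and then verifying that the resulting commutations are preserved when crossing the junction between the first triangle and the $\widetilde{\Pi}$ part is the most technically delicate point; it is best carried out by diagram chases using the $2$-functoriality axioms of an algebraic derivator and Proposition 1.1.9 of \cite{bbd}, in the same spirit as the corresponding verification in \S\ref{subsubsec: functoriality localization triangle}.
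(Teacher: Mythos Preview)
Your proposal is correct and follows essentially the same strategy as the paper: handle the first (localization) triangle via \S\ref{subsubsec: functoriality localization triangle}, lift $(\alpha,f)$ to a morphism of diagrams of schemes to obtain the natural map $\varphi:\mathcal{H}'\to(\alpha,f)_*\mathcal{H}$, then apply $\widetilde{\Pi}$ and invoke Lemma \ref{lem: functoriality technical} for the remainder and for the description on graded pieces. The paper in fact leaves the composition law to the reader, so your outline for that part is already more detailed than what appears there.
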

	
	        \begin{proof}
	        We proceed in three steps as in the proof of Theorem \ref{thm:maintheorem}.
	        \begin{enumerate}[a)]
	        \item The first triangle of the Postnikov system is the localization triangle and its functoriality follows from the discussion of \S \ref{subsubsec: functoriality localization triangle}.
	        \item Following the proof of Theorem \ref{thm:maintheorem} we consider the following commutative diagram in the category of diagrams of schemes.
	        \[
            \xymatrix{
            ( P, \mathcal{Z} ) \ar[r]^r \ar[d]_s \ar@/^/[drr]^(.7){(\alpha, f)} &
            ( P, X ) \ar[d]_p \ar@/^/[drr]^(.7){(\alpha, f)} \\
            Z \ar[r]^i \ar@/^/[drr]^(.7){f} & X \ar@/^/[drr]^(.7){f}&
            ( P', \mathcal{Z'} ) \ar[r]^{r'} \ar[d]_{s'} &
            ( P', X' ) \ar[d]^{p'} \\
            && Z' \ar[r]_{i'} & X'
            }
            \]
            The morphism $(\alpha,f):(P,X)\to (P,X')$ is induced by $\alpha$ at the level of posets and by $f:X\to X'$ at the level of schemes. The morphism $(\alpha,f):(P,\mathcal{Z})\to (P,\mathcal{Z}')$ is induced by $\alpha$ at the level of posets and by the maps $\overline{S}\to \overline{\alpha(S)}$ induced by $f$ at the level of schemes. We have the following commutative diagram in $\DA_{X'}$, where the vertical arrows $\stackrel{\sim}{\to}$ are isomorphisms by \cite[Lemma 1.18]{ayoubzucker} as in the proof of Theorem \ref{thm:maintheorem}.
            $$\xymatrix{
            (i')_*(i')^*\F' \ar[rr] \ar[d]_{\sim} && f_*i_*i^*f^*\F' \ar[d]^\sim \\
            (i')_*(s')_*(s')^*(i')^*\F' \ar@{<->}[d]_\sim && f_*i_*s_*s^*i^*f^*\F' \ar@{<->}[d]^\sim \\
            (p')_*(r')_*(r')^*(p')^*\F' \ar[r]_-{(p')_*\varphi} & (p')_*(\alpha,f)_*r_*r^*(\alpha,f)^*(p')^*\F' \ar@{<->}[r]_-{\sim} & f_*p_*r_*r^*p^*f^*\F'
            }$$
            We have the objects 
            $$\mathcal{H}'=(r')_*(r')^*(p')^*\F' \quad \mbox{ and } \quad \mathcal{H} = r_*r^*(\alpha,f)^*(p')^*\F'\simeq r_*r^*p^*f^*\F'$$
            of $\DA_{X'}(P')$ and $\DA_X(P)$ respectively, and the natural morphism $\varphi:\mathcal{H}'\to (\alpha,f)_*\mathcal{H}$ appearing in the above diagram. For $S'\in P'$, the value of $\mathcal{H}'$ at $S'$ is $(i_{\overline{S'}}^{X'})_*(i_{\overline{S'}}^{X'})^*\F'$, that of $(\alpha,f)_*\mathcal{H}$ is $f_*(i_{\overline{S}}^X)_*(i_{\overline{S}}^X)^*f^*\F'$, for $S'=\alpha(S)$, and the value of $\varphi$ is $\eta_S^{S'}(f)\F'$. 
            \item We define the remainder of $\Pi(\alpha,f;\F')$ to be the composite
            $$\xymatrixcolsep{2.5pc}\xymatrix{ \widetilde{\Pi}(\mathcal{H}') \ar[r]^-{\widetilde{\Pi}(\varphi)} & \widetilde{\Pi}((\alpha,f)_*\mathcal{H})   \ar[r]^-{\sim} & f_*\widetilde{\Pi}(\mathcal{H}) }$$
            where the second arrow is described in Lemma \ref{lem: functoriality technical}. The compatibility with composition is left to the reader. The description of $\Pi(\alpha,f;\F')$ at the level of graded objects follows from Lemma \ref{lem: functoriality technical} and the description of the values of $\varphi$ in b). The functoriality in $\F'$ is obvious.
	        \end{enumerate}
	        \end{proof}
	        
	        \begin{rem}
	        By applying Poincar\'{e}--Verdier duality one gets the dual statement that the Postnikov system of Theorem \ref{thm: main theorem dual} is functorial with respect to stratified morphisms.
	        \end{rem}

\bibliographystyle{alpha}

\bibliography{biblio}

\end{document}